\numberwithin{equation}{section} 
\newcounter{cont}[section] 
\newtheorem{thm}[cont]{Theorem}
\newtheorem{prop}[cont]{Proposition}
\newtheorem{lem}[cont]{Lemma}
\theoremstyle{definition}
\newtheorem{defn}[cont]{Definition}
 \theoremstyle{remark}
 \newtheorem{rem}[cont]{Remark}
\newcommand{\R}{\mathbb{R}}
\newcommand{\e}{\varepsilon}
\begin{document}

\title[Generalized Allen--Cahn model with degenerate diffusivity]{Long-time behavior of solutions to the generalized Allen--Cahn model with degenerate diffusivity}

\author[R. Folino]{Raffaele Folino}
\address[R. Folino]{Departamento de Matem\'aticas y Mec\'anica\\Instituto de Investigaciones en Matem\'aticas Aplicadas y en 
Sistemas\\Universidad Nacional Aut\'{o}noma de M\'{e}xico\\Circuito Escolar s/n, Ciudad Universitaria, C.P. 04510\\Cd. de M\'{e}xico (Mexico)}
\email{folino@mym.iimas.unam.mx}

\author[L. F. L\'{o}pez R\'{\i}os]{Luis F. L\'{o}pez R\'{\i}os}
\address[L. F. L\'{o}pez R\'{\i}os]{Departamento de Matem\'aticas y Mec\'anica\\Instituto de Investigaciones en Matem\'aticas Aplicadas y 
en Sistemas\\Universidad Nacional Aut\'{o}noma de M\'{e}xico\\Circuito Escolar s/n, Ciudad Universitaria, C.P. 04510\\Cd. de M\'{e}xico (Mexico)}
\email{luis.lopez@iimas.unam.mx}
	
\author[R. G. Plaza ]{Ram\'{o}n G. Plaza}
\address[R. G. Plaza]{Departamento de Matem\'aticas y Mec\'anica\\Instituto de Investigaciones en Matem\'aticas Aplicadas 
y en Sistemas\\Universidad Nacional Aut\'{o}noma de M\'{e}xico\\Circuito Escolar s/n, Ciudad Universitaria, C.P. 04510\\Cd. de M\'{e}xico (Mexico)}
\email{plaza@mym.iimas.unam.mx}
 
 \begin{abstract}
 The generalized Allen--Cahn equation,
 \[
 u_t=\e^2(D(u)u_x)_x-\frac{\e^2}2D'(u)u_x^2-F'(u),
 \]
 with nonlinear diffusion, $D = D(u)$, and potential, $F = F(u)$, of the form
 \[
 D(u) = |1-u^2|^{m}, \quad \text{or} \quad D(u) = |1-u|^{m}, \quad m >1,
 \]
 and
 \[
 F(u)=\frac{1}{2n}|1-u^2|^{n},  \qquad n\geq2,
 \]
 respectively, is studied. These choices correspond to a reaction function that can be derived from a double well potential, and to a generalized degenerate diffusivity coefficient depending on the density $u$ that vanishes at one or at the two wells, $u = \pm 1$. It is shown that interface layer solutions that are equal to $\pm 1$ except at a finite number of thin transitions of width $\e$ persist for an either exponentially or algebraically long time, depending upon the interplay between the exponents $n$ and $m$. For that purpose, energy bounds for a renormalized effective energy potential of Ginzburg--Landau type are derived.
 \end{abstract}
 
\keywords{nonlinear diffusion; metastability; energy estimates}

\maketitle



\section{Introduction}
In this paper, we consider the equation
\begin{equation}\label{eq:model}
	u_t=\e^2(D(u)u_x)_x-\frac{\e^2}2D'(u)u_x^2-F'(u), \qquad x \in (a,b)\subset\R, \,\, t > 0,
\end{equation}
where $\e>0$ is a small parameter, the diffusion coefficient $D:\R\to\R$ is a nonnegative function that vanishes just in one or two points, and
$F:\R\to\R$ is a double well potential with wells of equal depth.
More precisely, we assume that $D$ and $F$ are given by either
\begin{equation}\label{eq:D-F}
	D(u):=|1-u^2|^{m}, \qquad m>1, \qquad \qquad F(u)=\frac{1}{2n}|1-u^2|^{n},  \qquad n\geq2,
\end{equation}
or 
\begin{equation}\label{eq:D-F-onezero}
	D(u):=|1-u|^{m}, \qquad m>1,  \qquad \qquad F(u)=\frac{1}{2n}|1-u^2|^{n}, \qquad n\geq2.
\end{equation}
We consider equation \eqref{eq:model} on a bounded interval $[a,b]$ of the real line with homogeneous Neumann boundary conditions of the form
\begin{equation}\label{eq:Neu}
	u_x(a,t)=u_x(b,t)=0, \qquad \qquad \forall \, t>0,
\end{equation}
and initial condition given by
\begin{equation}\label{eq:initial}
 	u(x,0)=u_0(x), \qquad \qquad x\in[a,b].
\end{equation}

Assuming that the solution $u$ is sufficiently smooth and expanding the first term in the right hand side of \eqref{eq:model}, we can rewrite it as
\begin{equation}\label{eq:model2}
	u_t=\e^2D(u)u_{xx}+\frac{\e^2}2D'(u)u_x^2-F'(u).
\end{equation}

\smallskip

Equation \eqref{eq:model} (equivalently, \eqref{eq:model2}) arises within the framework of one-dimensional phase separation theory in binary alloys by Allen and Cahn \cite{AlCa79} and by Cahn and Hilliard \cite{CaHi58,Cahn59}. It reduces to the classical Allen--Cahn equation in the case where the diffusivity coefficient is assumed to be constant, $D \equiv D_0 > 0$. In the classical theory, the scalar field $u$ (called \emph{order parameter} or simply \emph{density}) interpolates between two pure phase components of the alloy, $u = -1$ and $u = +1$,  and the parameter $\e > 0$ measures the interface width separation. In the limit when $\e \to 0^+$, solutions to \eqref{eq:model} describe sharp interface layers separating the phases. Under such circumstances, these layers are transient solutions that appear to be stable but which, in fact, destroy its phase separating structure after an exponentially long time. This behavior is known in the literature as \emph{metastability} (see \cite{BrKo90,CaPe89,FuHa89} and the references cited therein). In most works, consideration has been restricted to models with constant diffusivity coefficients. It is expected from the phenomenological and theoretical viewpoints, however, that diffusion (or mobility/stiffness) can depend on the order parameter $u$ (it is to be observed that a density-dependent mobility coefficient appears in the original derivation of the Cahn--Hilliard equation \cite{CaHi71}). Consequently, many extended Cahn--Hilliard or Allen--Cahn models that take these nonlinearities into account have been proposed in the literature (see, e.g., \cite{Cahn61,CaHi71,CEN-C96,TaCa94}) and which, in turn, have motivated their associated mathematical analyses (cf. \cite{CISc16,CISc13,DaiDu16a,DGN-C99,EllGa96,EllGa97,FHLP20}). An interesting mathematical feature appears when the nonlinearity in the diffusion coefficient is \emph{degenerate}, meaning that diffusion approaches zero when the density $u$ approaches one or two of the pure phases. This behavior occurs in some physical models as well. For example, in the description of some binary alloys the diffusivity seems to be zero outside a relatively narrow interfacial band (cf. \cite{CaTa94,TaCa94,EllGa96}), that is, $D$ is zero outside the grain boundary including, e.g., the pure phases $u = \pm 1$, and positive inside.  Actually, a function $D = D(\cdot)$ of the form in \eqref{eq:D-F} can be justified under thermodynamic considerations (see Taylor and Cahn \cite{TaCa94}). Among the new mathematical features one finds that some equations with degenerate diffusion possess finite speed of propagation of initial disturbances, in contrast with the strictly parabolic case (see, e.g., \cite{GiKe96}). Another feature is the emergence of traveling waves of \emph{sharp} \cite{Hos87,SaMa97} or \emph{compacton} type \cite{CISc16,CiSS19}. Model equations with degenerate diffusivities are not exclusive of the description of binary alloys, but also appear in studies of two-phase fluid flow in a capillary tube (cf. \cite{CuJu12}), which adopt the classical approach of phase separation by Cahn and Hilliard, implement a degenerate higher-order diffusivity coefficient, and describe compacton formation. To sum up, equations of the form \eqref{eq:model} with nonlinear functions given by \eqref{eq:D-F} or by \eqref{eq:D-F-onezero} encompass degenerate, density-dependent diffusion coefficients that vanish at one or two equilibrium points of the reaction (which has the form of the derivative of a double-well potential) and are of interest as models of phase separation in alloys and in other physical applications.

This paper studies the long-time behavior of solutions of interface layer type to equation \eqref{eq:model}, paying particular attention to the phenomenon of metastability.  Motivated by previous analyses on the topic for the classical Allen--Cahn equation (cf. \cite{BrKo90,CaPe89,CaPe90,FuHa89}), in this work we apply the energy approach of Bronsard and Kohn \cite{BrKo90} to rigorously prove the existence of metastable states for the initial boundary-value problem (IBVP) \eqref{eq:model}, \eqref{eq:Neu} and \eqref{eq:initial}. In a recent contribution \cite{FHLP20}, we studied the phenomenon of metastability of solutions to a related equation with density-dependent coefficients. The main differences between the model studied in \cite{FHLP20} and \eqref{eq:model} are (i) that equation \eqref{eq:model} has a different structure, being the $L^2$-gradient flow of a generalized Ginzburg--Landau energy functional (see \eqref{eq:energy} below) whereas the model in \cite{FHLP20} is in conservation form; and, (ii) that in the present case the diffusivity is allowed to be degenerate, vanishing at one or both of the pure phases. Moreover, this work emphasizes the interplay between the diffusion function, $D = D(\cdot)$, and the potential, $F = F(\cdot)$, exemplified by the relation between their (real) exponents $m>1$ and $n\geq 2$. For instance, it is proved that, in the case when $n = m+2$ and for any $m>1$, interface layer solutions exhibit an exponentially slow motion, that is, they maintain the same transition layer structure for (at least) an exponentially long time and the layers move with an exponentially small speed (see Theorems \ref{thm:main} - \ref{thm:interface} below). This result generalizes to degenerate diffusions the known classical results on the Allen--Cahn equation with $m=0$, $n=2$ (see, among other works, \cite{CaPe89,ChenX04}). In the case where $n > m+2$, our analysis predicts algebraic slow motion of interface layer solutions, showing that metastability (understood as exponentially slow motion of interface layers) is a consequence of the relation between the diffusion and the reaction, or to the degeneracy of one with respect to the other. When diffusion vanishes only at one point (like in \eqref{eq:D-F-onezero}) similar results, depending upon the interplay between reaction and diffusion, arise. By considering equations of the form \eqref{eq:model} with nonlinearities satisfying either \eqref{eq:D-F} or \eqref{eq:D-F-onezero} for any $m >1$ and any $n \geq 2$, our work encompasses a large class of models which can be of interest for the scientific community.

\begin{rem}
Our attention is focused on the explicit formulas \eqref{eq:D-F}-\eqref{eq:D-F-onezero} for the diffusivity $D$ and the potential $F$ 
only for the sake of simplicity and for the readability of the paper;
in fact, what is very important for our results is that the functions $D,F$ behaves as \eqref{eq:D-F}-\eqref{eq:D-F-onezero} in a neighborhood of $-1$ and $+1$.
For instance, one can extend the results contained in this paper to a generic diffusivity $D\in C^1(\R)$ satisfying
\begin{equation*}
\begin{aligned}
	&D(\pm1)=0, \qquad \qquad D(u)>0 \quad \forall\, u\neq\pm1,  \\
	&d_1 |1\pm u|^{m-1} \leq D'(u) \leq d_2 |1\pm u|^{m-1}, \qquad\qquad \mbox{ for } \qquad |u\pm1|<\eta,
	\end{aligned}
\end{equation*}
for some constants $0<d_1\leq d_2$, $\eta>0$ and $m>1$,  
and a generic potential $F\in C^2(\R)$ satisfying 
\begin{equation*}
\begin{aligned}
	&F(\pm1)=F'(\pm1)=0, \qquad \qquad F(u)>0 \quad \forall\, u\neq\pm1,  \\
	&\lambda_1 |1\pm u|^{n-2} \leq F''(u) \leq \lambda_2 |1\pm u|^{n-2}, \qquad\qquad \mbox{ for } \qquad |u\pm1|<\eta,
	\end{aligned}
\end{equation*}
for some constants $0<\lambda_1\leq\lambda_2$, $\eta>0$ and $n\geq2$.
\end{rem}

The rest of the paper is structured as follows. In Section \ref{sec:energyest} we establish an energy identity that plays a crucial role in the forthcoming analysis. First, we make precise the notion of strong solution of the IBVP (see Definition \ref{def:weak}) and then show the aforementioned energy identity (see Lemma \ref{lem:energy}) for any of these solutions with improved regularity. The existence of strong solutions to the IBVP is beyond the scope of this work. Section \ref{sec:stationary} is devoted to proving the existence of stationary solutions to equation \eqref{eq:model} with homogeneous Neumann boundary conditions \eqref{eq:Neu}. This is done for all values of $n$ and $m$ under consideration and yield smooth and sharp wave solutions depending on the relation between these two parameters. These stationary wave solutions constitute fundamental pieces to construct stationary transition layer solutions. In Section \ref{sec:compactons} we prove existence of some particular stationary solutions to \eqref{eq:model}-\eqref{eq:Neu} in the case $n<m+2$, known as \emph{compactons} (see the two examples in Figure \ref{fig:compactons} or more examples in \cite{CISc16}). Studying stability or attractiveness of the compactons is an interesting open problem; however, in this paper we are interested in proving existence of metastable or algebraically slowly moving patterns for the model \eqref{eq:model},
that is, to prove existence of non-stationary solutions which maintain a transition layer structure, oscillating between $-1$ and $+1$ (without touching them), for a very long time $T_\e$, which satisfies $\displaystyle\lim_{\e\to0^+}T_\e=+\infty$. Therefore, we mainly focus the attention on the case $n\geq m+2$, where compactons do not exist. 

In Section \ref{sec:lower} we prove that there exists a lower bound for the Ginzburg--Landau energy functional \eqref{eq:energy}, which is a key ingredient for the subsequent analysis. It is worth noticing that this result is purely variational in character and that equation \eqref{eq:model} plays no role. The lower bound depends upon the relation between $n$ and $m$ (see Propositions \ref{prop:lower} and \ref{prop:lower_deg} for the $n = m+2$ and $n > m+2$ cases, respectively). 
Moreover, the main novelty of the results of Section \ref{sec:lower} is that they concern the generalized Ginzburg--Landau functional \eqref{eq:energy},
with \emph{degenerate} functions $D,F$ as in \eqref{eq:D-F} or \eqref{eq:D-F-onezero};
in particular, thanks to the results contained in Section \ref{sec:lower} it is easy to study the $\Gamma$-convergence of the functional \eqref{eq:energy},
when $D,F$ are given by \eqref{eq:D-F} or \eqref{eq:D-F-onezero}, and to extend to such \emph{degenerate case} the results of \cite{OwSt91} (at least in the one-dimensional case),
for details see the discussion before Proposition \ref{prop:lower}.

The central section \ref{sec:slow} is devoted to establish the slow motion of the transition layer solutions, yielding the main Theorems \ref{thm:main}-\ref{thm:interface}. The latter establish the exponential vs. algebraic slow motions based on the relation between the parameter values $n$ and $m$, and on the choice of double \eqref{eq:D-F} or single degenerate \eqref{eq:D-F-onezero} diffusion coefficient. Some conclusive remarks can be found in Section \ref{sec:conclusions}.

\section{An energy identity}\label{sec:energyest}
In this section we formulate the notion of solution to equation \eqref{eq:model}, with homogeneous Neumann boundary conditions \eqref{eq:Neu} 
and initial condition \eqref{eq:initial}, and provide some important energy estimates for it.
We also consider \emph{general} diffusion coefficients and reaction terms satisfying the following assumptions.
\begin{itemize}
\item On the diffusion coefficient we assume
\begin{equation*}
    D \in C^1[-1,1], \quad D(-1)D(1)=0, \quad \text{and}     \quad D>0 \text{ in } (-1,1).
\end{equation*}
To avoid very weak diffusions, we assume also that at least one derivative of $D$ is different from zero at $-1$ and $1$.
\item On the reaction we suppose 
\begin{equation*}
	F \in C^1[-1,1], \quad\qquad F(\pm1)=F'(\pm1)=0 \quad \text{ and } \qquad F>0 \text{ in } (-1,1).
\end{equation*}
\end{itemize}

It is important to mention that the generalized Allen--Cahn equation \eqref{eq:model} is associated to the gradient flow 
of the generalized Ginzburg--Landau energy functional (see \cite[Appendix A]{CISc16})
\begin{equation}\label{eq:energy} 
	E_\e[u]:=\int_a^b\left[\frac\e2D(u)u_x^2+\frac{F(u)}{\e}\right]\,dx.
\end{equation}
This quantity will be very useful to estimate important norms of solutions to \eqref{eq:model}.

Consider the space-time rectangle $Q_T = (a,b)\times(0,T)$ with lateral boundary $\Sigma_T = \{ a,b \} \times (0,T)$; if $T=\infty$, we just write $Q$ and $\Sigma$. Let us also write equation (\ref{eq:model}) in the usual filtration form \cite{Vaz07}
\begin{equation}\label{eq:model_filtration}
  u_t = \e^2 \Psi_1(u)_{xx} - \frac{\e^2}{2} \Psi_2(u)_x^2 + \frac{\e^2}{2} \Psi_3(u)_x^2 -F'(u),
\end{equation}
where
\begin{itemize}
\item $\Psi_1' = D$;
\item $(\Psi_2')^2$ is the positive part of $D'$;
\item $(\Psi_3')^2$ is the negative part of $D'$;
\end{itemize}
and $\Psi_i(-1)=0$, $i=1,2,3$. For problem (\ref{eq:model_filtration}), with boundary and initial conditions \eqref{eq:Neu}-\eqref{eq:initial}, 
we consider the strong solution notion \cite{Vaz07}. 
This notion allows to compute all the derivatives appearing in the equation as functions rather than distributions and
in this way the equation is satisfied almost everywhere (a.e.) in its domain.
\begin{defn}\label{def:weak}
Let us consider a measurable function $u_0$, with $-1 \le u_0 \le 1$.  
We say that a measurable function $u$, with $-1 \le u \le 1$, is a \emph{strong solution} to \eqref{eq:model_filtration} with boundary conditions \eqref{eq:Neu} and initial
condition \eqref{eq:initial}, if
\begin{itemize}
\item[(i)] $$u_t \in L^2((0,T),L^2(a,b)), \qquad \Psi_1(u) \in L^2((0,T),H^2(a,b)),$$ and 
$$\Psi_2(u), \Psi_3(u) \in L^1_\textrm{loc}((0,T), W^{1,1}_\textrm{loc}(a,b));$$
\item[(ii)] equation (\ref{eq:model_filtration}) is satisfied a.e. in $Q_T$;
\item[(iii)] the boundary and initial conditions \eqref{eq:Neu}-\eqref{eq:initial} are satisfied in the trace sense:
\begin{equation*}
    T_0(u) = u_0 \quad \text{and} \quad T_\Sigma(\Psi_1(u)_x)=0,
\end{equation*}
where $T_0:H^1((0,T),L^2(a,b)) \to L^2(a,b)$ and $T_\Sigma: L^2((0,T),H^1(a,b)) \to L^2(\Sigma_T)$ are the trace operators on the parabolic boundary of the problem, 
see \cite{adams2003sobolev}.
\end{itemize}
\end{defn}

The main result of this section is the following.
\begin{lem}[energy identity]
\label{lem:energy}
Let $u \in H^1((0,T),W^{1,4}(a,b))$ be a strong solution to \eqref{eq:model_filtration}, with boundary conditions (\ref{eq:Neu}) and initial condition \eqref{eq:initial}, 
then $u$ satisfies the energy identity
\begin{equation}\label{eq:energyestimates}
	E_\e[u](0)-E_\e[u](T) = \e^{-1} \iint_{Q_T} u_t^2 \, dxdt.
\end{equation}
\end{lem}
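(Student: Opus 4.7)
The strategy is the standard energy argument for gradient flows: multiply equation \eqref{eq:model} by $\e^{-1}u_t$, integrate over $(a,b)$, and watch the nonlinear cross terms cancel to produce $-\frac{d}{dt}E_\e[u]$. Integrating the resulting identity on $(0,T)$ then yields \eqref{eq:energyestimates}.

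First I would record that the regularity $u\in H^1((0,T),W^{1,4}(a,b))$ together with Definition \ref{def:weak}(i) makes every integrand below lie in $L^1(Q_T)$: indeed $u_t\in L^2(Q_T)$, $u_x\in L^\infty((0,T),L^4(a,b))$, $u_{xt}\in L^2((0,T),L^4(a,b))$, the functions $D(u),D'(u),F'(u)$ are bounded on $Q_T$ since $-1\le u\le 1$, and $(D(u)u_x)_x=\Psi_1(u)_{xx}\in L^2(Q_T)$. Testing \eqref{eq:model} with $u_t/\e$ and integrating on $(a,b)$ gives
\begin{equation*}
\int_a^b \frac{u_t^2}{\e}\,dx = \int_a^b \e(D(u)u_x)_x u_t\,dx - \int_a^b \frac{\e}{2}D'(u)u_x^2 u_t\,dx - \int_a^b \frac{F'(u)}{\e}u_t\,dx.
\end{equation*}
I would then integrate by parts in the first term, using the Neumann trace condition $T_\Sigma(D(u)u_x)=T_\Sigma(\Psi_1(u)_x)=0$ to kill the boundary terms, and apply the chain rule
\begin{equation*}
\frac{d}{dt}\Bigl[\tfrac{\e}{2}D(u)u_x^2\Bigr]=\tfrac{\e}{2}D'(u)u_t u_x^2+\e D(u)u_x u_{xt}
\end{equation*}
to rewrite $-\int_a^b \e D(u)u_x u_{xt}\,dx$ as $-\frac{d}{dt}\int_a^b \tfrac{\e}{2}D(u)u_x^2\,dx + \int_a^b \tfrac{\e}{2}D'(u)u_t u_x^2\,dx$. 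The two occurrences of $D'(u)u_t u_x^2$ cancel, and $\int_a^b \frac{F'(u)}{\e}u_t\,dx$ is recognized as $\frac{d}{dt}\int_a^b F(u)/\e\,dx$. This leaves $\int_a^b u_t^2/\e\,dx = -\frac{d}{dt}E_\e[u](t)$, whose time integral on $(0,T)$ is exactly \eqref{eq:energyestimates}.

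The only delicate point is justifying the chain rule for $t\mapsto\int_a^b \tfrac{\e}{2}D(u)u_x^2\,dx$ and the integration by parts when only $\Psi_1(u)$, not $u$ itself, has $H^2$-regularity in space. I would handle this by a standard mollification in time (which preserves the class $H^1((0,T),W^{1,4}(a,b))$), checking the pointwise identity for the smoothed functions where every step is classical, and then passing to the limit by dominated convergence, using the $L^1(Q_T)$ bounds noted above. Once this is in place, the cancellation of the $D'(u)u_t u_x^2$ terms is purely algebraic and the conclusion follows.
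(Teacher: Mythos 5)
Your proposal is correct and follows essentially the same route as the paper: multiply by $u_t$, integrate by parts in space using the Neumann trace condition, exploit the exact cancellation of the two $\tfrac{\e}{2}D'(u)u_x^2u_t$ terms to identify $-\tfrac{d}{dt}E_\e[u]$, and justify the formal manipulations by smoothing (the paper approximates $u$ by a sequence in $C^\infty(\overline{Q_T})$ and uses boundary-trace continuity to pass to the limit in $E_\e[\varphi_n](0)$ and $E_\e[\varphi_n](T)$, while you mollify in time only — a cosmetic difference). No gap.
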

\begin{proof}
Let us first multiply (\ref{eq:model_filtration}) by $u_t$ and integrate on the rectangle $Q_T$ to obtain
\begin{equation}\label{eq:model_int}
	\e^{-1} \iint_{Q_T}u_t^2 \, dxdt = - \iint_{Q_T} \left[ \e\Psi_1(u)_x u_{tx} + \frac{\e}2D'(u)u_x^2u_t + \frac{F'(u)}{\e} u_t \right]\, dxdt,
\end{equation}
where the first term in the right-hand side was obtained by integration by parts in space; observe that all the integrals are well defined, due to the hypothesis on $u$. 
Moreover, the right-hand side of the previous equation can be computed by approximating $u$ with smooth functions in the following way: 
since $u \in H^1((0,T),W^{1,4}(a,b))$, there exists a sequence $\{\varphi_n \}_{n \in \mathbb{N}} \subset C^\infty(\overline{Q_T})$ such that
\begin{gather}
    \varphi_n \to u, \quad \;\,\text{in } H^1(Q_T), \label{approx_1} \\
    \varphi_{nx}^2 \to u_x^2 \quad \text{in } L^2(Q_T), \label{approx_2}\\
    \varphi_{ntx} \to u_{tx} \quad \text{in } L^2(Q_T), \label{approx_3},
\end{gather}
as $n \to \infty$. 
On the other hand, observe that \eqref{approx_1} implies $\Psi_1(\varphi_n) \to \Psi_1(u)$ in $L^2(Q_T)$, and then 
$\Psi(\varphi_n)_x \rightharpoonup \Psi(u)_x$ weakly in $L^2(Q_T)$; 
this weakly convergence together with \eqref{approx_1}-\eqref{approx_3} imply that \eqref{eq:model_int} can be written as
\begin{equation}\label{eq:model_int_lim}
 	\begin{aligned}
      		\e^{-1} \iint_{Q_T}u_t^2 \, dxdt &= 
		- \lim_{n \to \infty} \iint_{Q_T} \left[ \e \Psi_1(\varphi_n)_x \varphi_{ntx} + \frac{\e}2D'(\varphi_n)\varphi_{nx}^2\varphi_{nt} + \frac{F'(\varphi_n)}{\e} \varphi_{nt} \right]\, dxdt \\
      		&= \lim_{n \to \infty} I_n.
    	\end{aligned}
\end{equation}
Thanks to the smoothness of the functions in the right-hand side of previous equation, the expressions inside can be calculated in the classical way: 
\begin{equation}\label{eq:I_n}
   	\begin{aligned}
      		I_n &= - \iint_{Q_T} \left[ \e D(\varphi_n) \varphi_{nx} \varphi_{nxt} + \frac{\e}2D'(\varphi_n)\varphi_{nx}^2\varphi_{nt} + \frac{F(\varphi_n)_t}{\e} \right]\, dxdt \\
      		&= - \iint_{Q_T} \left[ \frac{\e}{2} D(\varphi_n) (\varphi_{nx}^2)_t + \frac{\e}2D'(\varphi_n)\varphi_{nx}^2\varphi_{nt} + \frac{F(\varphi_n)_t}{\e} \right]\, dxdt \\
      		&= E_\e[\varphi_n](0) - E_\e[\varphi_n](T),
    	\end{aligned}
\end{equation}
where the last equation was obtained by integration by parts in time.

To finish the proof we have to take $n \to \infty$ in (\ref{eq:I_n}). 
To this end we use the theory of boundary traces \cite{adams2003sobolev}: for a fixed $s \in [0,T]$ there exists a continuous operator 
$$T_s:W^{1,1}((0,T),L^1(a,b)) \to L^1(a,b)$$ such that $T_s(\varphi) = \varphi(\cdot,s)$ for all $\varphi \in W^{1,1}((0,T),L^1(a,b)) \cap C(\overline{Q_T})$. 
On the other hand, due to \eqref{approx_1}-\eqref{approx_3}, we deduce that
\begin{align*}
   	(D(\varphi_n)\varphi_{nx}^2)_t &= D'(\varphi_n) \varphi_{nx}^2 \varphi_{nt} + 2D(\varphi_n) \varphi_{nx} \varphi_{nxt} \\
	&\to D'(u)u_x^2u_t + 2D(u)u_xu_{xt}
\end{align*}
in $L^1(Q_t)$. 
Therefore, we have $T_s(D(\varphi_n)\varphi_{nx}^2) \to T_s(D(u)u_x^2)$ in $L^1(a,b)$ and then
\begin{align*}
	E_\e[\varphi_n](s) &= \int_a^b\left[\frac\e2D(\varphi_n)\varphi_{nx}^2+\frac{F(\varphi_n)}{\e}\right]\,dx \\
       	&\to \int_a^b\left[\frac\e2D(u)u_x^2+\frac{F(u)}{\e}\right]\,dx = E_\e[u](s), \qquad \text{as } n \to \infty.
\end{align*}
This limit with $s=0$ and $T$, together with \eqref{eq:I_n} and \eqref{eq:model_int_lim} imply the energy estimate \eqref{eq:energyestimates},
and the proof is complete.
\end{proof}
The energy identity \eqref{eq:energyestimates} is crucial to study the slow motion of the solution to \eqref{eq:model}-\eqref{eq:Neu}-\eqref{eq:initial};
thus, in Section \ref{sec:slow} we will consider sufficiently smooth solutions such that \eqref{eq:energyestimates} holds true.
Let us stress that in Lemma \ref{lem:energy} we assume that the solution is more regular than Definition \ref{def:weak};
anyway, the \emph{weaker} Definition \ref{def:weak} will be helpful in Section \ref{sec:stationary},
where we will prove existence of particular stationary solutions that satisfy all the requirements of Definition \ref{def:weak}.

\section{Stationary solutions}
\label{sec:stationary}

In this section, we investigate some properties of the stationary solutions to the equation \eqref{eq:model} with homogeneous Neumann boundary conditions \eqref{eq:Neu},
when the diffusion coefficient $D$ and the potential $F$ are given in \eqref{eq:D-F} or \eqref{eq:D-F-onezero},
namely we discuss the properties of the solutions $\varphi:=\varphi(x)$ to the boundary value problem
\begin{equation}\label{eq:stationary}
	\e^2(D(\varphi)\varphi')'-\frac{\e^2}2D'(\varphi)(\varphi')^2=F'(\varphi), \qquad x\in(a,b), \qquad \qquad \varphi'(a)=\varphi'(b)=0.
\end{equation}
To do that, we focus the attention to some special solutions to \eqref{eq:model} in the whole real line.

\subsection{Standing waves}\label{sec:standing}
To start with, we discuss the existence of traveling waves for \eqref{eq:model} in the whole real line.
Traveling waves (or traveling fronts) are special solutions of the form $u(x,t)=\phi(x-ct)$,
where $c\in\R$ is the speed of the wave and $\phi:\R\to\R$ is the wave profile function. 
We are interested in monotone traveling wave solutions to \eqref{eq:model} connecting the two stable states $u=-1$ and $u=1$;
thus, we introduce the variable $\xi=x-ct$ and denote by $\phi:=\phi(\xi)=\phi(x-ct)$ and by $':=\frac{d}{d\xi}$.
Assuming that $u(x,t)=\phi(x-ct)=\phi(\xi)$ is an increasing solution to \eqref{eq:model2} (at least twice differentiable), we obtain
\begin{equation}\label{eq:travel}
	\begin{aligned}
		&-c\phi'=\e^2D(\phi)\phi''+\frac{\e^2}{2}D'(\phi)(\phi')^2-F'(\phi), \qquad  \xi\in(\omega_-,\omega_+),\\
		&\phi'(\xi)>0, \qquad  \xi\in(\omega_-,\omega_+), \qquad \phi(\omega_\pm)=\pm1, 	 \; \qquad D(\phi)(\phi')^2(\omega_\pm)=0,
	\end{aligned}
\end{equation}
for some $-\infty\leq\omega_-<\omega_+\leq+\infty$.
Let us stress that, in the case of non-degenerate diffusion $D>0$, one has $\omega_\pm=\pm\infty$, 
because the profile $\phi$ can not reach the states $u=\pm1$;
however, when the diffusion vanishes in one or both states to be connected, it is possible to have $\omega_\pm\in\R$.
Multiplying by $\phi'$ the first equation in \eqref{eq:travel}, we deduce
\begin{equation*}
	-c(\phi')^2=\left[\frac{\e^2}{2}D(\phi)(\phi')^2-F(\phi)\right]'.
\end{equation*}
By integrating in $(\omega_-,\omega_+)$ and using the second equation in \eqref{eq:travel}, we end up with
\begin{equation*}
	-c\int_{\omega_-}^{\omega_+}(\phi')^2\,d\xi=F(+1)-F(-1)=0.
\end{equation*}
Therefore, as in the case of constant diffusion, we can only have $c=0$ and there are no traveling waves with speed $c\neq0$ 
connecting the two global minimal points of the potential $F$.
Let us then focus the attention on the case $c=0$ and introduce the following special solution.
\begin{defn}
A function $\Phi:\R\to\R$ is called \emph{non-decreasing standing wave} to \eqref{eq:model} if it is a solution to the problem
\begin{equation}\label{eq:ST}
	\begin{cases}
		\displaystyle\e^2(D(\Phi)\Phi')'-\frac{\e^2}2D'(\Phi)(\Phi')^2-F'(\Phi)=0,  \qquad \qquad     x\in(\omega_1,\omega_2),\\
		\Phi(x)=-1, \qquad x\leq\omega_1,  \qquad\quad\qquad \Phi(x)=1, \qquad \;x\geq\omega_2,\\
		\displaystyle\lim_{x\to\omega_1}D(\Phi)(\Phi')^2(x)=\lim_{x\to\omega_2}D(\Phi)(\Phi')^2(x)=0, \quad \quad	\Phi'(x)>0, \quad  x\in(\omega_1,\omega_2), 
	\end{cases}
\end{equation}
\end{defn}
for some $-\infty\leq\omega_1<\omega_2\leq+\infty$, where the second condition has to be read as $\displaystyle\lim_{x\to-\infty}\Phi(x)=-1$ 
$\left(\displaystyle\lim_{x\to+\infty}\Phi(x)=+1\right)$ if $\omega_1=-\infty$ ($\omega_2=\infty$).

In the following proposition, we establish the existence of a non-decreasing standing wave and show that 
its behavior strictly depends on the interplay between the degrees of degeneracy of $D,F$ in $\pm1$.

\begin{prop}\label{prop:standing}
Let $D$ and $F$ be as in \eqref{eq:D-F}.
Then, there exists a unique solution (up to translation) to the problem \eqref{eq:ST}, and we have the three following alternatives:
\begin{enumerate}
\item if $n<m+2$, then $\omega_1,\omega_2\in\R$ and we say that the standing wave $\Phi$ touches the equilibria $\pm1$.
In particular, there exists a unique solution to \eqref{eq:ST} with $-\omega_1=\omega_2=\omega_\e\in(0,\infty)$, satisfying $\Phi(0)=0$ and $\Phi(\pm\omega_\e)=\pm1$, with
\begin{equation}\label{eq:omega}
	\omega_\e=\e\int_0^{+1}\sqrt{\frac{D(s)}{2F(s)}}\,ds,
\end{equation}
so that $\displaystyle\lim_{\e\to0^+}\omega_\e=0$.
\item If $n=m+2$, then $-\omega_1=\omega_2=\infty$, implying that the solutions to \eqref{eq:ST} never touch the equilibria $\pm1$, 
namely $|\Phi(x)|<1$ for any $x\in\R$ and we have the exponential decay
\begin{equation}\label{eq:exp-decay}
	\begin{aligned}
		&\Phi(x)+1\leq c_1e^{c_2 x}, \qquad\qquad&\mbox{ as }\, x\to-\infty,\\
		&1-\Phi(x)\leq c_2e^{-c_2 x}, \qquad\qquad&\mbox{ as }\, x\to+\infty,
	\end{aligned}
\end{equation}
for some constants $c_1,c_2>0$.
In particular, if we fix $\Phi(0)=0$ the solution is explicitly given by
\begin{equation}\label{eq:tanh}
	\Phi(x)=\tanh\left(\frac{x}{\sqrt{n}\e}\right)=\frac{1-\exp\left(-\frac{2}{\sqrt{n}\e}x\right)}{1+\exp\left(-\frac{2}{\sqrt{n}\e}x\right)}.
\end{equation}
\item Finally, if $n>m+2$ then, as in the previous case, the profile $\Phi$ never touches the equilibria $\pm1$ ($-\omega_1=\omega_2=\infty$),
but in this case we have only the algebraic decay:
\begin{equation}\label{eq:alg-decay}
	\begin{aligned}
		&\Phi(x)+1\leq c_1x^{-c_2}, &\mbox{as } x\to-\infty,\\
		&1-\Phi(x)\leq c_2 x^{-c_2}, \qquad &\mbox{as } x\to+\infty.
	\end{aligned}
\end{equation}
\end{enumerate}
\end{prop}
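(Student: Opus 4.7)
The plan is to reduce the second-order ODE in \eqref{eq:ST} to a first-order equation by a first integral, and then to carry out a case analysis based on the convergence of a single improper integral. First, I would multiply the ODE by $\Phi'$ and recognise the left-hand side as a total derivative,
\[
\frac{d}{dx}\left[\frac{\e^2}{2}D(\Phi)(\Phi')^2-F(\Phi)\right]=0.
\]
The matching conditions $D(\Phi)(\Phi')^2\to 0$ and $F(\Phi)\to 0$ at the endpoints of $(\omega_1,\omega_2)$ force the integration constant to vanish, giving the separable equation $\e\sqrt{D(\Phi)}\,\Phi'=\sqrt{2F(\Phi)}$ (positive root, since $\Phi'>0$). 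Fixing the translation by $\Phi(0)=0$, this yields the implicit representation
\[
G(\Phi(x)):=\int_0^{\Phi(x)}\sqrt{\frac{D(u)}{2F(u)}}\,du=\frac{x}{\e},
\]
which already gives uniqueness of the profile and reduces everything to the study of the monotone function $G$ on $[0,1)$.

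The whole trichotomy is then controlled by the behaviour of the integrand near $u=1$: for $D,F$ as in \eqref{eq:D-F} one has the exact identity $\sqrt{D(u)/(2F(u))}=\sqrt{n}\,(1-u^2)^{(m-n)/2}$, so the convergence of $G(1)$ hinges on the sign of $(m-n)/2+1$. If $n<m+2$, $G(1)$ is finite and $\omega_\e:=\e\,G(1)$ recovers \eqref{eq:omega}; I would then extend $\Phi$ by $\pm 1$ outside $[-\omega_\e,\omega_\e]$ and check that the resulting piecewise profile satisfies \eqref{eq:ST} in the sense required, in particular that $D(\Phi)(\Phi')^2\to 0$ at $\pm\omega_\e$, which is automatic from the first integral since $F(\pm1)=0$. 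The limit $\omega_\e\to 0$ as $\e\to 0^+$ is then immediate. If $n\geq m+2$ the integral $G$ diverges at $1$, so $\omega_2=+\infty$; moreover, the evenness of $D$ and $F$ together with the uniqueness of $\Phi$ yields the symmetry $\Phi(-x)=-\Phi(x)$, from which $\omega_1=-\infty$ as well.

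The decay rates are then simply read off from the rate of divergence of $G$. In the critical case $n=m+2$, $\sqrt{D/(2F)}=\sqrt{n}/(1-u^2)$ has antiderivative $\sqrt{n}\,\mathrm{arctanh}(u)$; inverting $G(\Phi(x))=x/\e$ gives \eqref{eq:tanh} explicitly, and \eqref{eq:exp-decay} follows from $1-\tanh(y)\sim 2e^{-2y}$. In the supercritical case $n>m+2$, the local expansion $\sqrt{D/(2F)}\sim C(1-u)^{-(n-m)/2}$ (with exponent $<-1$) integrates to $G(s)\sim C'(1-s)^{-(n-m-2)/2}$ as $s\to 1^-$; inverting yields $1-\Phi(x)\sim C''x^{-2/(n-m-2)}$, which is the algebraic bound \eqref{eq:alg-decay}. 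The bound as $x\to-\infty$ follows again from $\Phi(-x)=-\Phi(x)$.

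The main technical obstacle I anticipate lies in case (1): one has to verify that the piecewise function (equal to $\pm 1$ outside $[-\omega_\e,\omega_\e]$ and the ODE solution inside) actually meets the regularity requirements of Definition \ref{def:weak} at the matching points $x=\pm\omega_\e$, where $\Phi'$ may be unbounded even though $D(\Phi)(\Phi')^2$ vanishes. The first-integral identity $\tfrac{\e^2}{2}D(\Phi)(\Phi')^2=F(\Phi)$ is precisely what makes this check tractable, because it provides a pointwise equality between the possibly singular diffusion flux and the vanishing potential, from which the required integrability of $\Psi_1(\Phi)_x$ and the vanishing trace at $\pm\omega_\e$ can be extracted directly.
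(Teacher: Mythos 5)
Your proposal follows essentially the same route as the paper's proof: multiply by $\Phi'$ to obtain the first integral $\tfrac{\e^2}{2}D(\Phi)(\Phi')^2=F(\Phi)$, reduce to the implicit representation $\int_0^{\Phi(x)}\sqrt{D(s)/(2F(s))}\,ds=x/\e$, and read the trichotomy off the convergence of this integral near $s=\pm1$, with the explicit $\tanh$ profile in the critical case. Your additional details (the asymptotic inversion giving the explicit algebraic rate $2/(n-m-2)$, and the regularity check at $\pm\omega_\e$ in case (1)) are correct refinements of points the paper treats more briefly or defers to the discussion following the proposition.
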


\begin{proof}
Assuming that $\Phi$ is sufficiently smooth and multiplying by $\Phi'$ the first equation of \eqref{eq:ST}, one obtains
\begin{equation*}
	\left[\frac{\e^2}2D(\Phi)(\Phi')^2-F(\Phi)\right]'=0, \qquad \qquad x\in(\omega_1,\omega_2),
\end{equation*}
which implies (together with the second and third conditions in \eqref{eq:ST}) that the left right hand side is constantly equal to zero.
Therefore, the existence of a solution to \eqref{eq:ST} can be deduced by studying the Cauchy problem
\begin{equation}\label{eq:Cauchy-ST}
	\begin{cases}
		\displaystyle\frac{\e^2}2D(\Phi)(\Phi')^2=F(\Phi), \qquad \qquad x\in(\omega_1,\omega_2),\\
		\Phi(0)=0,
	\end{cases}
\end{equation}
where we put the condition $\Phi(0)=0$ for definiteness (notice that we can assume $0\in(\omega_1,\omega_2)$ 
because the problem \eqref{eq:ST} is invariant under translations, 
that is if $\Phi(x)$ is a solution to \eqref{eq:ST}, then $\Phi(x+h)$ is still a solution for any $h\in\R$).
The solution to \eqref{eq:Cauchy-ST} is implicitly defined by
\begin{equation*}
	\int_0^{\Phi(x)}\sqrt{\frac{D(s)}{2F(s)}}\,ds=\frac{x}{\e}, \qquad x\in(\omega_1,\omega_2),
\end{equation*}
and so, the integrals
$$-\frac{\omega_1}{\e}=\int_{-1}^0\sqrt{\frac{D(s)}{2F(s)}}\,ds, \qquad \qquad \frac{\omega_2}{\e}=\int_0^{+1}\sqrt{\frac{D(s)}{2F(s)}}\,ds$$
play a crucial role in the behavior of the solution to \eqref{eq:Cauchy-ST}.
Precisely, the behavior of the functions $D,F$ close to $\pm1$ tells if $\omega_{1,2}$ are finite or infinite, 
and in the explicit case \eqref{eq:D-F}, we have
$$\int_{-1}^0\sqrt{\frac{D(s)}{2F(s)}}\,ds=\int_0^{+1}\sqrt{\frac{D(s)}{2F(s)}}\,ds=\sqrt{n}\int_0^1(1-s^2)^{\frac{m-n}2}\,ds<\infty \quad \Longleftrightarrow \quad n<m+2.$$
Moreover, if $n=m+2$, then \eqref{eq:Cauchy-ST} becomes 
\begin{equation*}
	\begin{cases}
		\e^2(\Phi')^2=\frac{1}{n}(1-\Phi^2)^2, \qquad \qquad  x\in(-\infty,+\infty),\\
		\Phi(0)=0,
	\end{cases}
\end{equation*}
and solving by separation of variable, one finds the explicit solution \eqref{eq:tanh}.
Finally, since
$$\e\Phi'=\frac{1}{\sqrt{n}}\left[(1-\Phi)(1+\Phi)\right]^\frac{n-m}2, \qquad \qquad x\in(-\infty,+\infty),$$
we have the algebraic decay \eqref{eq:alg-decay} if $n-m>2$. 
\end{proof}

In the proof of Proposition \ref{prop:standing}, we explicitly computed the solution of \eqref{eq:ST} only in the case $n=m+2$,
but it is easy to compute the explicit solution also in other cases. 
For instance, consider the case \eqref{eq:D-F} with $n=m$:
from \eqref{eq:Cauchy-ST} it follows that $(\e\Phi')^2=\frac1{n}$ and, as a simple consequence, we obtain the standing wave
\begin{equation}\label{eq:retta}
	\Phi(x)=\begin{cases}
		-1, \qquad \qquad & x\leq-\sqrt{n}\,\e, \\
		\displaystyle\frac{x}{\sqrt{n}\,\e}, & -\sqrt{n}\,\e< x<\sqrt{n}\,\e,\\
		1, &x\geq\sqrt{n}\,\e.
	\end{cases}
\end{equation}
Precisely, the profile $\Phi$ in \eqref{eq:retta} is a solution to \eqref{eq:ST} with $\Phi(0)=0$ and $-\omega_1=\omega_2=\sqrt{n}\,\e$.

As a second example, consider the case of $D$ and $F$ given by \eqref{eq:D-F} with $n=m+1$:
the solution to \eqref{eq:Cauchy-ST} satisfies $(\e\Phi')^2=\frac1{n}(1-\Phi^2)$, which is equivalent to $[\arcsin(\Phi)]'=\frac{1}{\sqrt{n}\,\e}$
and the profile
\begin{equation}\label{eq:sin}
	\Phi(x)=\begin{cases}
		-1, \qquad \qquad \qquad& x\leq-\frac\pi2\sqrt{n}\,\e,\\
		\displaystyle\sin\left(\frac{x}{\sqrt{n}\,\e}\right),  & -\frac\pi2\sqrt{n}\,\e<x<\frac\pi2\sqrt{n}\,\e, \\
		1, &x\geq\frac\pi2\sqrt{n}\,\e,
	\end{cases}
\end{equation}
is the solution to \eqref{eq:ST} with $-\omega_1=\omega_2=\frac\pi2\sqrt{n}\,\e$, satisfying $\Phi(0)=0$.

Proposition \ref{prop:standing} tells us that there is a crucial difference between the cases $n<m+2$ and $n\geq m+2$.
Indeed, if $n\geq m+2$ the standing wave $\Phi\in C^\infty(\R)$ and satisfies $\Phi'(x)>0$ for any $x\in\R$;
thus, a standing wave is never a stationary solution in any bounded interval with homogeneous Neumann boundary conditions,
that is we can never truncate $\Phi$ in a way to find a solution to \eqref{eq:stationary}, for some $a,b\in\R$.
On the contrary, in the case $n<m+2$ the standing wave $\Phi\in C^\infty(\R\backslash\left\{\omega_1,\omega_2\right\})$,
it satisfies $\Phi'(x)=0$ if $x<\omega_1$ or $x>\omega_2$ and we have the following alternatives:
if $m<n<m+2$, then $\Phi'=0$ at $\omega_1,\omega_2$ and, in particular, $\Phi\in C^2(\R)$ if $m+1<n<m+2$,
while $\Phi\in C^1(\R)$ if $m<n\leq m+1$
(see for example \eqref{eq:sin} where $\Phi\in C^1(\R)$ but $\Phi'$ is not differentiable at $\omega_1,\omega_2$);
if $n=m$, then $\Phi\in H^1(\R)$ is not differentiable at $\omega_1,\omega_2$, see \eqref{eq:retta};
finally, if $2\leq n<m$ then 
$$\lim_{x\to\omega_1^+}\Phi'(x)=\lim_{x\to\omega_2^-}\Phi'(x)=+\infty.$$
To be more precise, we have
\begin{equation*}
	\int_{\omega_1}^{\omega_2}\Phi'(x)^2\,dx=\frac{1}{\e\sqrt{n}}\int_{\omega_1}^{\omega_2}\left[1-\Phi^2(x)\right]^\frac{n-m}2\Phi'(x)\,dx=
	\frac{1}{\e\sqrt{n}}\int_{-1}^{+1}\left(1-s^2\right)^\frac{n-m}2\,ds,
\end{equation*}
and, as a consequence, $\Phi\in H^1(\R)$  if and only if $n>m-2$.
However, in any case we have $D(\Phi)(\Phi')^2=F(\Phi)=0$ at $\omega_1,\omega_2$
and if $n<m+2$, then a standing wave \eqref{eq:ST} is also a stationary solution to \eqref{eq:model} 
for any $[a,b]\supset[\omega_1,\omega_2]$, see Definition \ref{def:weak}.

Moreover, as we pointed out in Proposition \ref{prop:standing}, $\omega_\e=\mathcal{O}(\e)$ and so,
for any $[a,b]\subset\R$ we can take $\e$ sufficiently small that a standing wave \eqref{eq:ST} is a solution to \eqref{eq:stationary}.
As a consequence, the set of solutions to \eqref{eq:stationary} is very different whether $n<m+2$ or $n\geq m+2$ and in the first case we have many more solutions.
Indeed, all the solutions to \eqref{eq:stationary} satisfy
\begin{equation*}
	\frac{\e^2}{2}D(\varphi)(\varphi')^2-F(\varphi)=C,
\end{equation*}
for some $C\in\left[-\frac{1}{2n},0\right]$.
In particular, for $C=-\frac{1}{2n}$ we obtain the constant solution $\varphi\equiv0$, which is clearly a solution to \eqref{eq:stationary} for any $a,b$ and any $n,m$;
for $C\in(-\frac{1}{2n},0)$ it is easy to check that one has periodic solutions and so, there exist infinitely many periodic solutions to \eqref{eq:stationary} for any $a,b$ and any $n,m$;
finally, if $C=0$ it is important to distinguish the cases $n<m+2$ and $n\geq m+2$.
In both the cases, we have the constant solutions $\varphi\equiv\pm1$, but in the case $n\geq m+2$ we can not have other solutions to \eqref{eq:stationary}
(because the standing wave never satisfies the boundary conditions as we mentioned above), while if $n<m+2$ and $\e$ is small, 
then a standing wave is a solution to \eqref{eq:stationary} or, as we will see in details in Section \ref{sec:compactons},  
we can construct different stationary solutions to the equation \eqref{eq:model} in a bounded interval of the real line
satisfying homogeneous Neumann boundary conditions \eqref{eq:Neu} oscillating between (and touching) $-1$ and $+1$.
These solutions are known in literature as \emph{compactons} (cfr. \cite{CISc16} and references therein) and we will study its existence in the following section.

We conclude the study of standing wave for \eqref{eq:model} in the whole real line by considering the case \eqref{eq:D-F-onezero}.
Similarly to Proposition \ref{prop:standing}, we can prove the following result.
\begin{prop}\label{prop:standing-onezero}
Let $D$ and $F$ be as in \eqref{eq:D-F-onezero}.
Then, there exists a unique solution (up to translation) to the problem \eqref{eq:ST} with $\omega_1=-\infty$, and we have the three following alternatives:
\begin{enumerate}
\item if $n<m+2$, then $\omega_2<\infty$ and there exists a unique solution to \eqref{eq:ST} satisfying $\Phi(0)=0$, $\Phi(\omega_\e)=1$, with $\omega_\e$ given by \eqref{eq:omega}
and converging either exponentially fast to $-1$ as $x\to-\infty$ if $n=2$ (see the first estimate of \eqref{eq:exp-decay}) 
or algebraically fast if $n>2$ (see the first estimate of \eqref{eq:alg-decay}). 
\item If $n=m+2$, then $\omega_2=\infty$, i.e. the solutions to \eqref{eq:ST} never touch the equilibria $\pm1$, 
namely $|\Phi(x)|<1$ for any $x\in\R$ and it converges algebraically fast to $-1$ as $x\to-\infty$ (since $n>2$)
and exponentially fast to $+1$ as $x\to+\infty$.
\item Finally, if $n>m+2$, then we have the same situation of case (3) of Proposition \ref{prop:standing}.
\end{enumerate}
\end{prop}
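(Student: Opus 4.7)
The plan is to mimic closely the proof of Proposition~\ref{prop:standing}, taking advantage of the fact that the only change is the asymmetry of $D$ at the two equilibria $\pm1$. First I would multiply the ODE in \eqref{eq:ST} by $\Phi'$, integrate, and use the boundary behavior $D(\Phi)(\Phi')^2\to0$ at $\omega_{1,2}$ together with $F(\pm1)=0$ to reduce the second order equation to the first order relation
\begin{equation*}
  \frac{\e^2}{2}D(\Phi)(\Phi')^2=F(\Phi),
\end{equation*}
and then (after normalizing by translation so that $\Phi(0)=0$) to the implicit representation
\begin{equation*}
  \int_0^{\Phi(x)}\sqrt{\frac{D(s)}{2F(s)}}\,ds=\frac{x}{\e}.
\end{equation*}
Uniqueness (up to translation) then follows from the standard ODE theory applied to the reduced first-order problem with $\Phi'>0$.

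Next I would determine $\omega_1,\omega_2$ by analyzing the convergence of the integrals $\int_{-1}^{0}$ and $\int_{0}^{1}$ of the integrand. With $D,F$ as in \eqref{eq:D-F-onezero}, a direct computation shows
\begin{equation*}
  \sqrt{\frac{D(s)}{2F(s)}}=\sqrt{n}\,\frac{1}{|1-s|^{(n-m)/2}|1+s|^{n/2}},
\end{equation*}
so the integrand is singular like $|1+s|^{-n/2}$ near $s=-1$ and like $|1-s|^{-(n-m)/2}$ near $s=1$. Since $n\geq2$, the first integral always diverges, which forces $\omega_1=-\infty$; while the second integral converges if and only if $n<m+2$, which gives $\omega_2=\omega_\e<\infty$ in case (1) and $\omega_2=+\infty$ in cases (2) and (3). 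In case (1), the value of $\omega_\e$ is exactly formula \eqref{eq:omega}, and $\omega_\e=\mathcal{O}(\e)$.

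To pin down the decay rates I would insert the asymptotic behavior of the integrand into the implicit equation and invert. Near $s=-1$ the analysis splits: if $n=2$ (possible only in case (1), because $m>1$ forces $n\geq m+2>3$ in cases (2)-(3)), then the integrand has a simple pole $\sim|1+s|^{-1}$, so integration yields a logarithm and, upon inversion, the exponential bound $\Phi(x)+1\leq c_1e^{c_2 x}$ as $x\to-\infty$. If instead $n>2$, the singularity is of order $n/2>1$, the integral blows up as a negative power of $\Phi(x)+1$, and inversion gives the algebraic decay $\Phi(x)+1\leq c_1|x|^{-c_2}$. The analogous computation at $s=1$ in case (2), where $(n-m)/2=1$, yields exponential decay $1-\Phi(x)\leq c_1e^{-c_2 x}$; in case (3), where $(n-m)/2>1$, it yields the algebraic bound $1-\Phi(x)\leq c_1 x^{-c_2}$, which is exactly the conclusion already obtained in case~(3) of Proposition~\ref{prop:standing}.

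The only real issue to handle with care is the asymptotic inversion near the equilibria, in particular the borderline case $n=2$ where the logarithmic divergence has to be converted to exponential decay, and checking that $m>1$ together with $n\geq m+2$ indeed excludes $n=2$ in cases (2)-(3) so that the decay to $-1$ is genuinely algebraic there. Everything else is a straightforward transcription of the arguments already used for the symmetric diffusivity \eqref{eq:D-F}.
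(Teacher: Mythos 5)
Your proposal is correct and follows essentially the same route as the paper: reduce to the first-order relation $\frac{\e^2}{2}D(\Phi)(\Phi')^2=F(\Phi)$, decide finiteness of $\omega_{1,2}$ from the convergence of $\int\sqrt{D/(2F)}$ near $\mp1$ (divergent at $-1$ since $n\geq2$, convergent at $+1$ iff $n<m+2$), and read off the exponential versus algebraic decay from the order of the singularity of the integrand, exactly as in the paper's appeal to the ODE $\e\Phi'=\frac{1}{\sqrt n}(1-\Phi)^{(n-m)/2}(1+\Phi)^{n/2}$. Your explicit check that $m>1$ excludes $n=2$ in cases (2)--(3) is a detail the paper leaves implicit, but the argument is the same.
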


\begin{proof}
The proof is very similar to the one of Proposition \ref{prop:standing}.
By proceeding in the same way, we infer
\begin{align*}
	&\omega_1=-\e\int_{-1}^0\sqrt{\frac{D(s)}{2F(s)}}\,ds=-\sqrt{n}\,\e\int_{-1}^0\frac{ds}{(1-s)^\frac{n-m}2(1+s)^\frac{n}2}=-\infty, \qquad \; \forall\,n\geq2,\\
	&\omega_2=\e\int_0^{+1}\sqrt{\frac{D(s)}{2F(s)}}\,ds=\sqrt{n}\,\e\int_{0}^1\frac{ds}{(1-s)^\frac{n-m}2(1+s)^\frac{n}2}<\infty \qquad \Longleftrightarrow \qquad n<m+2.
\end{align*}
Moreover, we have that a non-decreasing standing wave \eqref{eq:ST} satisfies 
\begin{equation}\label{eq:ode-Phi-onezero}
	\e\Phi'=\frac{1}{\sqrt{n}}(1-\Phi)^\frac{n-m}2(1+\Phi)^\frac n2, \qquad \qquad x\in(-\infty,\omega_2),
\end{equation}
and the proof is complete.
\end{proof}

As in the case \eqref{eq:D-F}, let us compute some explicit solutions in the case \eqref{eq:D-F-onezero}.
First, we consider the case $n=m=2$: equation \eqref{eq:ode-Phi-onezero} becomes $\e\Phi'=\frac1{\sqrt2}(1+\Phi)$, 
which is equivalent to $[\ln(1+\Phi)]'=\frac{1}{\sqrt{2}\,\e}$, and we deduce that the profile
\begin{equation}\label{eq:exp}
	\Phi(x)=\begin{cases}
		\displaystyle\exp\left(\frac{x}{\sqrt{2}\,\e}\right)-1, \qquad \qquad & x<(\sqrt{2}\ln2)\e, \\
		1, &x\geq(\sqrt{2}\ln2)\e.
	\end{cases}
\end{equation}
is the solution to \eqref{eq:ST} with $\omega_1=-\infty$, $\omega_2=(\sqrt{2}\ln2)\e$ and $\Phi(0)=0$.

Next, we consider the generic case $n=m>2$.
It follows that equation \eqref{eq:ode-Phi-onezero} becomes $\e\Phi'=\frac1{\sqrt{n}}(1+\Phi)^\frac n2$, 
which is equivalent to $[(1+\Phi)^{1-\frac n2}]'=\frac{2-n}{2\sqrt{n}\,\e}$, and we deduce that the profile
\begin{equation}\label{eq:alg-onezero}
	\Phi(x)=\begin{cases}
		\displaystyle\left(\frac{2\sqrt{n}\,\e}{(2-n)x+2\sqrt{n}\,\e}\right)^{\frac{2}{n-2}}-1, \qquad \qquad & x<\frac{2\sqrt{n}}{2-n} (2^{\frac{2-n}2}-1)\e, \\
		1, &x\geq\frac{2\sqrt{n}}{2-n} (2^{\frac{2-n}2}-1)\e.
	\end{cases}
\end{equation}
is the solution to \eqref{eq:ST} with $\omega_1=-\infty$, $\omega_2=\frac{2\sqrt{n}}{2-n} (2^{\frac{2-n}2}-1)\e$ and $\Phi(0)=0$.

Notice that in both the examples \eqref{eq:exp} and \eqref{eq:alg-onezero} the profile  $\Phi\in H^1(\R)$ is not differentiable in the point where it reaches $+1$;
however, in \eqref{eq:exp} $\Phi$ converges exponentially fast to $-1$ as $x\to-\infty$, while in \eqref{eq:alg-onezero} the speed of convergence is only algebraic.

In all this section, we have only considered non-decreasing standing wave, but we can define a \emph{non-increasing standing wave} in the similar way
and it is easy to check that by taking $-\Phi$ in \eqref{eq:tanh}-\eqref{eq:retta}-\eqref{eq:sin} and $\Phi(-x)$ in \eqref{eq:exp}-\eqref{eq:alg-onezero}, 
we obtain a \emph{non-increasing standing wave} connecting $+1$ and $-1$.

\subsection{Compactons}\label{sec:compactons}
In this section, we prove the existence of compactons for \eqref{eq:model}-\eqref{eq:Neu}, 
namely we construct some particular solutions to \eqref{eq:stationary}, when $D$ and $F$ are given by \eqref{eq:D-F} with $n<m+2$.
To be more precise, we show that for any $N\in\mathbb{N}$ and arbitrary points $h_1<h_2<\dots<h_N$, one can choose $\e>0$ so small that
there exist two stationary solutions oscillating between $\pm1$ (and touching them) and with exactly $N$ zeros located at $h_1<h_2<\dots<h_N$.

\begin{prop}\label{prop:ex-compactons}
Let $D$ and $F$ given by \eqref{eq:D-F} with $n<m+2$.
Fix $N\in\mathbb N$ and $N$ points satisfying $a<h_1<h_2<\dots<h_N<b$.
Then, there exists $\bar\e>0$ such that if $\e\in(0,\bar\e)$, then there exist two stationary solutions to \eqref{eq:model}-\eqref{eq:Neu}
with exactly $N$ zeros located at $h_1,h_2,\dots,h_N$ and oscillating between (touching) $-1$ and $+1$.
\end{prop}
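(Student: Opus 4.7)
The strategy is to construct the two stationary solutions by pasting together translated copies of the non-decreasing standing wave $\Phi$ provided by Proposition \ref{prop:standing} (and its reflection, a non-increasing standing wave) at the prescribed zero locations, separating consecutive transition zones by constant plateaus equal to $\pm 1$. Since $n<m+2$, Proposition \ref{prop:standing} yields $\Phi$ with $\Phi(\pm\omega_\e)=\pm 1$ and $\omega_\e = \mathcal{O}(\e)\to 0$ as $\e\to 0^+$, so the transition layers are arbitrarily thin for small $\e$, which is the essential feature that makes the construction possible.

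Concretely, I would fix
\[
\bar\e > 0 \quad \text{such that} \quad \omega_{\bar\e} < \min\!\left\{\, h_1-a,\; b-h_N,\; \tfrac12\min_{1\le j\le N-1}(h_{j+1}-h_j)\right\},
\]
so that for every $\e\in(0,\bar\e)$ the intervals $[h_j-\omega_\e,\,h_j+\omega_\e]$ are pairwise disjoint and contained in $(a,b)$. I would then define $\varphi_1$ piecewise: identically $-1$ on $[a,\,h_1-\omega_\e]$; equal to $\Phi(\,\cdot-h_1)$ on $[h_1-\omega_\e,\,h_1+\omega_\e]$, so it climbs from $-1$ to $+1$ and vanishes at $h_1$; identically $+1$ on the next plateau; equal to $-\Phi(\,\cdot-h_2)$ on $[h_2-\omega_\e,\,h_2+\omega_\e]$, a non-increasing layer with a zero at $h_2$; and so on, alternating the orientation, up to $h_N$, after which $\varphi_1$ is taken constant $\pm 1$ on $[h_N+\omega_\e,\,b]$. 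The second stationary solution is $\varphi_2:=-\varphi_1$, equivalently obtained by starting from $+1$ at $x=a$; the two solutions differ on every plateau and each has exactly the required $N$ zeros.

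It remains to check that $\varphi_i$ satisfies the stationary form of Definition \ref{def:weak}. The ODE in \eqref{eq:stationary} holds classically inside every transition zone (because $\Phi$ solves it there) and trivially on every plateau (where $\varphi_i\equiv\pm 1$ makes every term vanish, using $F'(\pm 1)=0$ and $\varphi_i'\equiv 0$); the Neumann conditions are immediate, as $\varphi_i$ is constant near $a$ and $b$. The main obstacle, and the only delicate point, is the regularity across the gluing points $h_j\pm\omega_\e$, especially in the range $2\leq n<m$ where $\Phi'$ blows up at $\pm\omega_\e$. This is resolved by the degeneracy of $D$: the standing-wave identity $\tfrac{\e^2}{2}D(\Phi)(\Phi')^2 = F(\Phi)$ combined with \eqref{eq:D-F} yields $D(\Phi)\Phi' \sim |1\mp\Phi|^{(m+n)/2}\to 0$ as $\Phi\to\pm 1$, so $\Psi_1(\varphi_i)_x = D(\varphi_i)\varphi_i'$ is continuous across the gluing points (where it in fact vanishes), which together with the ODE upgrades $\Psi_1(\varphi_i)$ to $H^2(a,b)$; the corresponding requirements on $\Psi_2(\varphi_i)$ and $\Psi_3(\varphi_i)$ reduce to the integrability bound on $|\varphi_i'|\,|1\mp\varphi_i|^{(m-1)/2}$ already established in Section \ref{sec:standing} under the hypothesis $n<m+2$.
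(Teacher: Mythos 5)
Your proposal is correct and follows essentially the same route as the paper: both glue translated copies of the standing wave $\Phi$ from Proposition \ref{prop:standing} (alternating with its reflection) around the prescribed zeros, choose $\bar\e$ so that $\omega_\e$ is smaller than the minimal half-gap between consecutive transition points (and the distances to $a$ and $b$), and take the pair $\varphi$, $-\varphi$ as the two solutions. Your explicit verification of the regularity across the gluing points via $D(\Phi)\Phi'\sim|1-\Phi^2|^{(m+n)/2}\to 0$ is a useful elaboration of what the paper only asserts by reference to its earlier discussion of Definition \ref{def:weak}, but it is not a different argument.
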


\begin{proof}
As we have previously mentioned, this result follows from the behavior of the standing wave \eqref{eq:ST} described in Proposition \ref{prop:standing}.
In particular, consider the unique standing wave given by case (1) of Proposition \ref{prop:standing} and satisfying $\Phi(0)=0$.
We shall properly glue together some translations of the standing wave $\Phi$ and its reflection to obtain a solution to \eqref{eq:stationary}.

Define
$$m_1=a, \qquad m_i=\frac{h_{i-1}+h_i}{2}, \quad i=2,\dots,N, \qquad m_{N+1}=b,$$
and
\begin{equation}\label{eq:compactons-}
	\varphi_N^\e(x):=\Phi((-1)^{i+1}(x-h_i)), \qquad \qquad x\in[m_i,m_{i+1}], 
\end{equation}
for $i=1,\dots,N$.
Clearly, $\varphi_N^\e(h_i)=0$ for $i=1,\dots,N$ and as, we observed in Section \ref{sec:standing}
$\Phi(x-h_i)$ is a non-decreasing standing wave, while $-\Phi(x-h_i)=\Phi(h_i-x)$ is a non-increasing standing wave.
Since $\Phi(\pm\omega_\e)=\pm1$, where $\omega_\e$ is given by \eqref{eq:omega} and we want $\varphi_N^\e$ to touch both $-1$ and $+1$ 
in any interval $[m_i,m_{i+1}]$ with $\varphi'_N(a)=\varphi'_N(b)=0$, we require
$$\omega_\e<\min\left\{h_1-a,b-h_N\right\}, \qquad \mbox{ and } \qquad \omega_\e\leq\min_{2\leq i\leq N}\left\{\frac{h_i-h_{i-1}}{2}\right\}.$$
Denoting by
$$\delta_N:=\min\left\{h_1-a,b-h_N,\min_{2\leq i\leq N}\left\{\frac{h_i-h_{i-1}}{2}\right\}\right\},$$
we have $\varphi_N^\e(h_i\pm\omega_\e)=\pm(-1)^{i+1}$ if $\omega_\e\leq\delta_N$.
Since $\omega_\e$ is given by \eqref{eq:omega}, we define
$$\bar\e:=\delta_N\left(\int_0^{+1}\sqrt{\frac{D(s)}{2F(s)}}\,ds\right)^{-1}.$$
Let us stress that the constant $\bar\e\in(0,\infty)$ because $n>m+2$, and we have $\omega_\e<\delta_N$ for any $\e\in(0,\bar\e)$.
Therefore, the functions $\varphi_N^\e$ and $-\varphi_N^\e$ have been constructed by gluing together standing wave solutions \eqref{eq:ST},
they have $N$ zeros located at $h_1,h_2,\dots,h_N$, they satisfy the boundary conditions in \eqref{eq:stationary} 
and we can conclude that they are stationary solutions to \eqref{eq:model}-\eqref{eq:Neu}, see Definition \ref{def:weak}.
\end{proof}

In Figure \ref{fig:compactons} we show some examples of stationary solutions constructed in Proposition \ref{prop:ex-compactons}.
In particular, the profile in Figure \ref{fig:compactons} (left) can be obtained by using the formula \eqref{eq:compactons-} 
with standing wave $\Phi$ given by \eqref{eq:retta}.

\begin{figure}[htp]
\centering
\includegraphics[width=6.5cm,height=5.5cm]{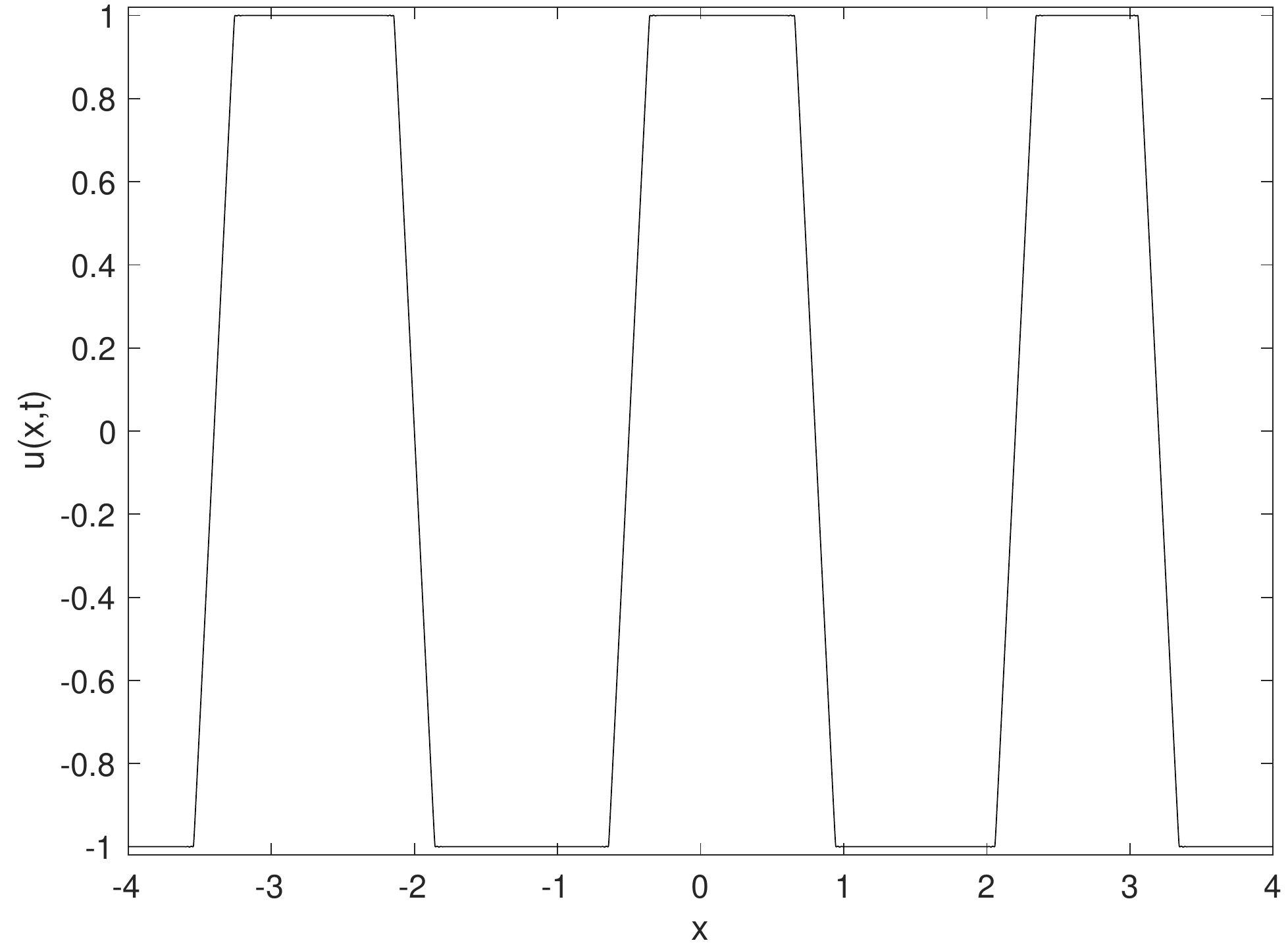}\qquad
\includegraphics[width=6.5cm,height=5.5cm]{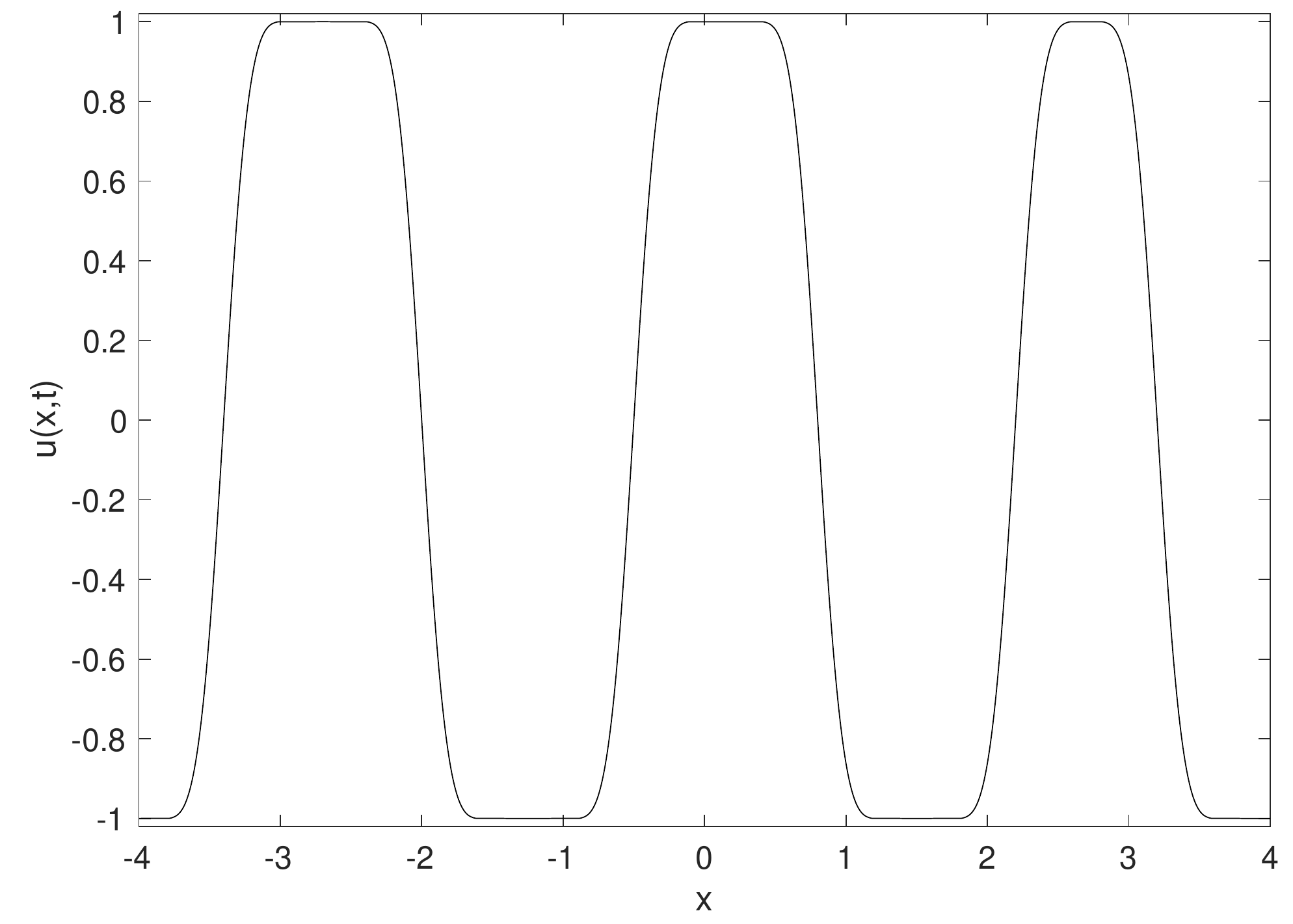}
\caption{Stationary solutions (compactons) to \eqref{eq:model}-\eqref{eq:Neu},
where $\e=0.1$ and the functions $D,F$ are given by \eqref{eq:D-F} with $m=n=2$ (left) and $m=1.5$, $n=3$ (right).
The zeros are located at $h_1=-3.4, h_2=-2, h_3=-0.5, h_4=0.8, h_5=2.2, h_6=3.2$.}
\label{fig:compactons}
\end{figure}

\begin{rem}
By construction, the function $\varphi_N^\e$ in \eqref{eq:compactons-} solves the first equation in \eqref{eq:stationary} almost everywhere
and it has the same regularity of the standing wave $\Phi$.
To be more precise, we can say that $\varphi_N^\e\in C^\infty\left(\R\backslash\left\{h_i\pm\omega_\e\right\}\right)$
and the regularity in the points $h_i\pm\omega_\e$ depends on $n,m$:
if $m<n<m+2$, the profile $\varphi_N^\e$ is (at least) differentiable in the whole interval $[a,b]$
($\varphi_N^\e\in C^2(a,b)$ for any $m+1<n<m+2$),
while in the case $n\leq m$, $\varphi_N^\e$ is not differentiable in the $2N$ points $h_i\pm\omega_\e$.
However, in any case $\varphi_N^\e$ is a stationary solution to \eqref{eq:model}-\eqref{eq:Neu}, see Definition \ref{def:weak}.
We also stress that if $\delta_N<\min\{h_1-a,b-h_N\}$, we can choose $\e=\bar\e$ 
or equivalently we can choose $\e$ so that $\omega_\e=\delta_N=\frac12(h_j-h_{j-1})$ for some $j\in\{2,\dots,N\}$, 
meaning that $h_{j-1}+\omega_\e=h_j-\omega_\e=m_j$ and we have two standing waves glued together exactly at the point where they reach $+1$ or $-1$. 
\end{rem}

It is important to mention that the function $\varphi^\e_N$ in \eqref{eq:compactons-} is a stationary solution to \eqref{eq:model}-\eqref{eq:Neu},
for any $N\in\mathbb N$ and for arbitrarily points $h_1,\dots,h_N$ provided that $\e$ is sufficiently small,
because $n<m+2$.
On the other hand, if we use the same construction with $\Phi$ given by Proposition \ref{prop:standing} in the case $n\geq m+2$ we obtain
a profile oscillating between $-1$ and $+1$, but it does not touch $\pm1$ and it is far from being a stationary solution
if the points $h_1,\dots,h_N$ are arbitrarily chosen and $\e>0$.
Indeed, the only solutions to \eqref{eq:stationary} in the case $n\geq m+2$ are constant or periodic. 
However, we will show in Section \ref{sec:slow} that a profile like \eqref{eq:compactons-} evolves very slowly in time
in the case $n\geq m+2$ and the solution starting with initial datum \eqref{eq:compactons-} maintains the same structure (at least)
for an exponentially long time, i.e. for a time  $T_\e\geq\nu\exp(c/\e)$ if $n=m+2$, and for an algebraically long time $T_\e\geq\nu\e^{-\beta}$, when $n>m+2$.

Concerning the case \eqref{eq:D-F-onezero}, the profile $\varphi_{N}(-x)$ in \eqref{eq:compactons-}, with standing wave given by Proposition \ref{prop:standing-onezero}
is a stationary solution if and only if $n<m+2$ and $N$ is an even number.

\begin{prop}\label{prop:stat-onezero}
Let $D$ and $F$ given by \eqref{eq:D-F-onezero} with $n<m+2$.
Fix $N\in\mathbb N$ and $2N$ points satisfying $a<h_1<h_2<\dots<h_{2N}<b$.
Then, there exists $\bar\e>0$ such that if $\e\in(0,\bar\e)$, then there exist a stationary solution to \eqref{eq:model}-\eqref{eq:Neu}
with exactly $2N$ zeros located at $h_1,h_2,\dots,h_{2N}$ and oscillating between $-1$ and $+1$ (touching only $+1$).
\end{prop}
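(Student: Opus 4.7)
The plan is to adapt the construction of Proposition \ref{prop:ex-compactons}, using as basic building block the non-decreasing standing wave $\Phi$ from Proposition \ref{prop:standing-onezero}, which in the regime $n<m+2$ reaches $+1$ in finite distance $\omega_\e$ (given by \eqref{eq:omega}) but approaches $-1$ only asymptotically. I would glue together translates and reflections of $\Phi$ along $[a,b]$ to produce a piecewise profile with exactly $2N$ zeros located at $h_1,\dots,h_{2N}$.

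Concretely, I would set $m_1:=a$, $m_{2N+1}:=b$ and $m_i:=(h_{i-1}+h_i)/2$ for $i=2,\dots,2N$, and on each subinterval $[m_i,m_{i+1}]$ define $\varphi$ by the prescription \eqref{eq:compactons-} applied to $\varphi_{2N}(-x)$, so that the signs of the translations are flipped relative to Proposition \ref{prop:ex-compactons}, placing a $+1$-plateau at each endpoint of $[a,b]$ rather than at the zeros. Writing
$$
\delta_{2N}:=\min\Bigl\{h_1-a,\,b-h_{2N},\,\min_{2\le i\le 2N}\tfrac{h_i-h_{i-1}}{2}\Bigr\},\qquad \bar\e:=\delta_{2N}\Bigl(\int_0^1\sqrt{\tfrac{D(s)}{2F(s)}}\,ds\Bigr)^{-1},
$$
the choice $\e\in(0,\bar\e)$ yields $\omega_\e<\delta_{2N}$ exactly as in Proposition \ref{prop:ex-compactons}; the integral is finite thanks to $n<m+2$, cf.\ Proposition \ref{prop:standing-onezero}.

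The remaining work is to verify Definition \ref{def:weak} for this $\varphi$: the stationary ODE in \eqref{eq:stationary} holds pointwise a.e.\ since each piece is a translate/reflection of $\Phi$; the Neumann conditions $\varphi'(a)=\varphi'(b)=0$ are satisfied because, with the modified orientation, the outermost pieces reduce to the constant $+1$ on neighborhoods of $a$ and $b$, where $\Phi'=0$; and the regularity $\Psi_1(\varphi)\in H^2$ follows because every interior gluing point $m_i$ falls inside a $+1$-plateau, so $D(\varphi)=0$ there and the jump in $D(\varphi)\varphi_x$ across $m_i$ vanishes.

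The main obstacle, and the source of the parity restriction to $2N$ zeros, is precisely the admissibility of the corners: since $\Phi$ has zero derivative in finite distance only at $+1$, the Neumann boundary conditions force both outermost plateaus to be at $+1$, and tracking the alternation of rising/falling pieces then forces the total number of zero crossings to be \emph{even}. Establishing that the smallness condition $\e<\bar\e$ makes \emph{every} interior gluing point land strictly inside a $+1$-plateau (rather than somewhere along the transitional part of $\Phi$, where $D(\varphi)>0$ would obstruct the $H^2$ regularity of $\Psi_1(\varphi)$) is the technical heart of the argument and is exactly what makes the construction yield a genuine stationary solution in the sense of Definition \ref{def:weak}.
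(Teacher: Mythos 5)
Your construction is the same as the paper's (glue translates and reflections of the single-degenerate standing wave $\Phi$ with the orientation reversed so that the profile equals $+1$ near $a$ and $b$, with the same $\delta_{2N}$ and $\bar\e$), and your parity observation --- that the Neumann conditions force a $+1$-plateau at both endpoints and hence an even number of zeros --- is exactly the point the paper makes. However, the step you single out as ``the technical heart of the argument'' is false: it is not true that every interior gluing point lands in a $+1$-plateau. With $2N$ zeros and $\varphi=+1$ near $a$, the negative excursions occur on $(h_1,h_2),(h_3,h_4),\dots,(h_{2N-1},h_{2N})$, and the corresponding gluing points $m_{2k}=(h_{2k-1}+h_{2k})/2$, $k=1,\dots,N$, are local minima at height $\Phi\bigl(-(h_{2k}-h_{2k-1})/2\bigr)\in(-1,0)$. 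By Proposition \ref{prop:standing-onezero} the wave reaches $+1$ in finite distance but only approaches $-1$ asymptotically, so these minima sit strictly above $-1$, where $D(\varphi)=|1-\varphi|^m$ is close to $2^m\neq0$; and since $\Phi'>0$ wherever $\Phi<1$, the one-sided derivatives at $m_{2k}$ are $\mp\Phi'\bigl(-(h_{2k}-h_{2k-1})/2\bigr)\neq0$. Hence $D(\varphi)\varphi_x$ has a genuine (very small, but nonzero) jump at each $m_{2k}$, and no choice of $\e<\bar\e$ removes it: a continuous profile built from strictly monotone pieces of $\Phi$ that dips below $0$ and comes back must have a corner at its minimum. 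Only the gluing points $m_3,m_5,\dots,m_{2N-1}$, together with the two boundary segments, lie in $+1$-plateaus, and only there does your argument ``$D(\varphi)=0$ kills the corner'' apply.

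So the claimed $H^2$ regularity of $\Psi_1(\varphi)$ across the $N$ interior near-$(-1)$ minima is not established by your argument, and this is precisely what distinguishes the single-degenerate case from Proposition \ref{prop:ex-compactons}, where \emph{both} families of extrema sit on genuine $\pm1$-plateaus with $D=0$. For comparison, the paper's own proof only verifies the endpoint conditions $\varphi_{2N}(a)=\varphi_{2N}(b)=+1$ and $\varphi'_{2N}(a)=\varphi'_{2N}(b)=0$ and otherwise refers back to Proposition \ref{prop:ex-compactons}; it does not supply the interior verification either. But as written, your proof rests on an assertion that is simply not true of the constructed profile, so the regularity check at those $N$ points has to be redone rather than dismissed by the plateau argument.
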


\begin{proof}
The proof is the same of the one of Proposition \ref{prop:ex-compactons}.
Define 
$$\varphi_{2N}(x):=\Phi((-1)^{i}(x-h_i)), \qquad \qquad x\in[m_i,m_{i+1}],$$
for $i=1,\dots,2N$, with $\Phi$ given by Proposition \ref{prop:standing-onezero} in the case $n<m+2$.
If $\e$ is sufficiently small, then $\varphi_{2N}(a)=\varphi_{2N}(b)=+1$ and $\varphi'_{2N}(a)=\varphi'_{2N}(b)=0$,
because $2N$ is an even natural number.
\end{proof}
Therefore, in the case \eqref{eq:D-F-onezero} with $n<m+2$ we have only one stationary solution with an even number of zeros:
we can not have a stationary solution with $2N+1$ zeros because either $\varphi_{2N+1}(a)\in(-1,0)$ or $\varphi_{2N+1}(b)\in(-1,0)$,
so either $\varphi'_{2N+1}(a)\neq0$ or $\varphi'_{2N+1}(b)\neq0$ (the standing wave $\Phi$ never touches $-1$, see Proposition \ref{prop:standing-onezero}).

In Figure \ref{fig:one-degenerate}, we show two examples of profiles \eqref{eq:compactons-} with $\Phi$ given by \eqref{eq:exp}.
In both examples the number of transitions is even, but in the left one $\varphi_4(-4)=1$, while in the right one $\varphi_6(-4)<0$.
Hence, the profile in the left picture is a stationary solution to \eqref{eq:model}-\eqref{eq:Neu}, while the profile in the right one has
a \emph{non stationary transition layer structure}, which does not satisfy the boundary conditions \eqref{eq:Neu}.
We will show in Section \ref{sec:slow} that even if the latter profile is not stationary, it evolves very slowly in time
and the solution starting with such an initial datum maintains the same structure for a time
$T_\e\geq\nu\exp(c/\e)$ for some $c>0$, independent on $\e$.
\begin{figure}[htp]
\centering
\includegraphics[width=6.5cm,height=5.5cm]{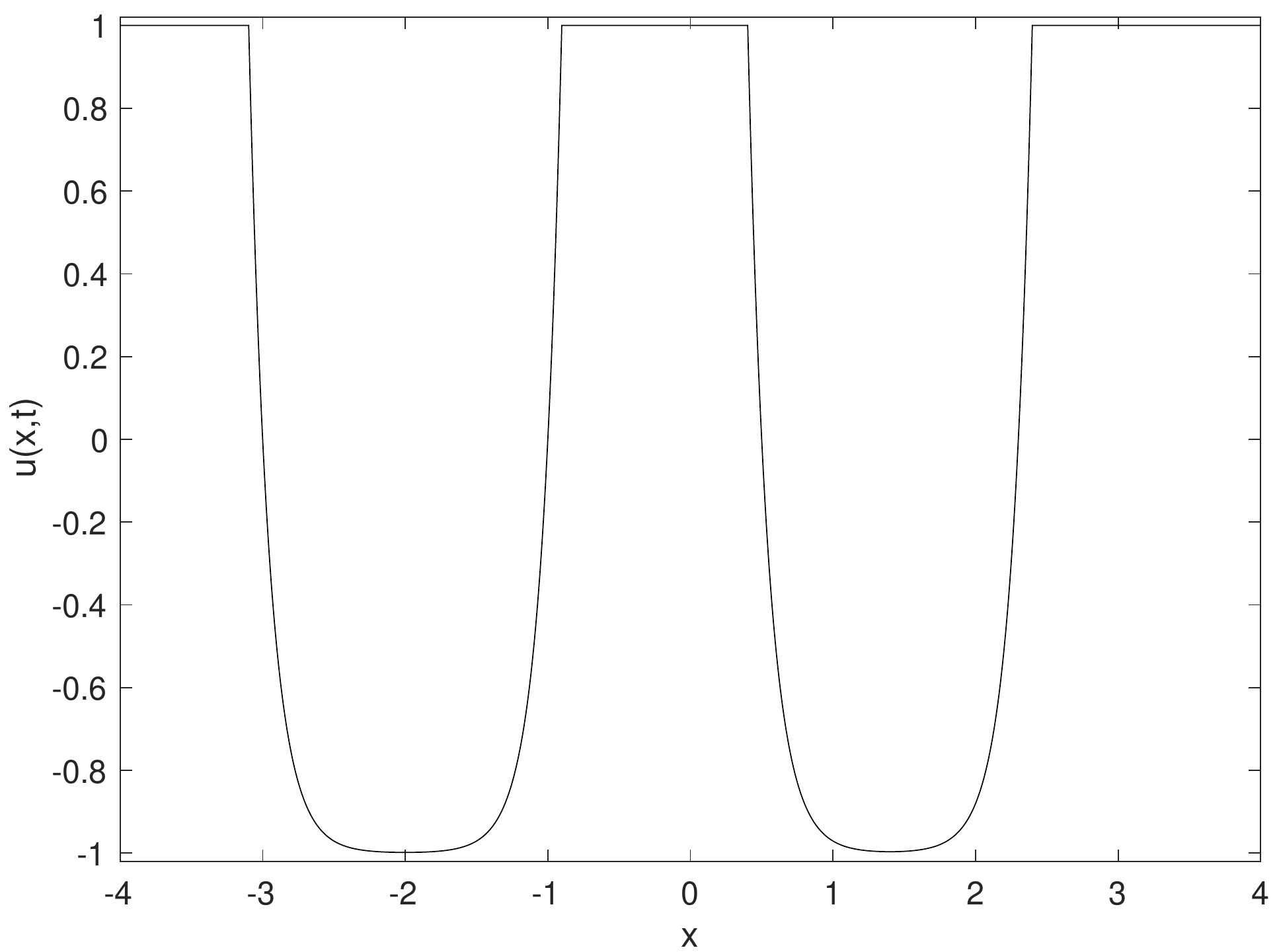}\qquad
\includegraphics[width=6.5cm,height=5.5cm]{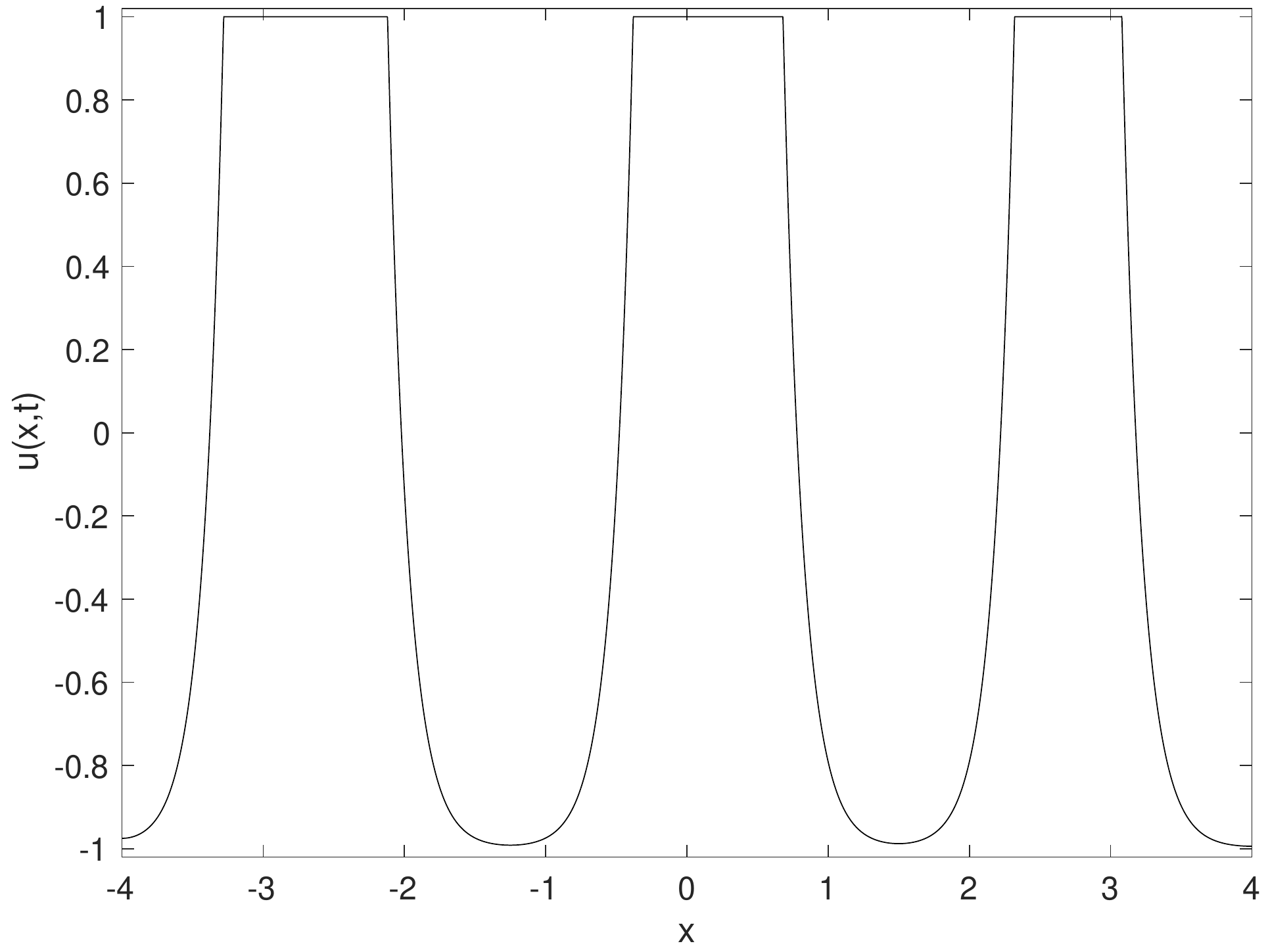}
\caption{Two different chains (see \eqref{eq:compactons-}) of standing waves $\Phi(x)$ and $\Phi(-x)$, 
where $\Phi$ is given by \eqref{eq:exp} with $\e=0.1$.
The zeros are located at $h_1=-3, h_2=-1, h_3=0.5, h_4=2.3$ in the left picture,
and at $h_1=-3.4, h_2=-2, h_3=-0.5, h_4=0.8, h_5=2.2, h_6=3.2$ in the right one.}
\label{fig:one-degenerate}
\end{figure}

\section{A variational result on the energy}
\label{sec:lower}

This section contains the key results to prove the slow motion of the solutions to \eqref{eq:model}-\eqref{eq:Neu}.
These results are purely variational in character and concern only the energy functional $E_\e$, defined in \eqref{eq:energy};
equation \eqref{eq:model} plays no role.
By applying Young inequality in \eqref{eq:energy}, we deduce that for any $a\leq c<d\leq b$,
\begin{equation}\label{eq:ineq}
	\int_c^d\left[\frac\e2D(u)u_x^2+\frac{F(u)}\e\right]\,dx \geq\left|\int_{u(c)}^{u(d)}\sqrt{2D(s)F(s)}\,ds\right|.
\end{equation}
In particular, if $u$ is an increasing function connecting the states $\pm1$, that is $u(c)=-1$, $u(d)=1$, we obtain
\begin{equation}\label{eq:gamma}
	\int_c^d\left[\frac\e2D(u)u_x^2+\frac{F(u)}\e\right]\,dx \geq\int_{-1}^{1}\sqrt{2D(s)F(s)}\,ds=:\gamma.
\end{equation}
The goal of this section is to prove that the constant $\gamma$ in \eqref{eq:gamma} is the minimum energy to have a transition between $-1$ and $+1$ in the following sense:
if a sequence $\{u^\e\}_{\e>0}\in H^1(a,b)$ converges in $L^1(a,b)$ as $\e\to0^+$ to a piecewise constant function 
\begin{equation}\label{eq:vstruct}
	v:[a,b]\rightarrow\{-1,1\}\  \hbox{with $N$ jumps located at } a<h_1<h_2<\cdots<h_N<b,
\end{equation}	
then 
\begin{equation}\label{eq:liminf-E}
	\liminf_{\e\to0^+} E_\e[u^\e]\geq N\gamma,
\end{equation}
and it is possibile to choose a sequence such that the equality holds. 
Actually, we shall prove a lower bound on the energy which implies \eqref{eq:liminf-E} and reads as
\begin{equation}\label{eq:lower-theta}
	E_\varepsilon[u]\geq N\gamma-\theta(\e),
\end{equation}
where $\theta(\e)\to0$ as $\e\to0^+$, provided that $L^1$-distance between $u$ and a function $v$ as in \eqref{eq:vstruct} is sufficiently small.
Then, we shall give an example for which in the limit \eqref{eq:liminf-E} the equality holds.

The main novelty of our result concerns the fact that the diffusion coefficient $D$ could be double degenerate \eqref{eq:D-F} or single degenerate \eqref{eq:D-F-onezero}.
In \cite{OwSt91}, the authors study the $\Gamma$-convergence properties of the general class of functionals 
\begin{equation*}
	W_\e[u]:=\int_{\Omega}\frac{1}{\e}w(x,u,\e\nabla u)\,dx,
\end{equation*}
where $\Omega$ is an open, bounded domain in $\R^n$ and the function $w\in C^3(\bar{\Omega}\times\R\times\R^n)$ 
satisfies appropriate assumptions (cfr. assumptions (H1)-(H7) in \cite{OwSt91}).
The functional \eqref{eq:energy} does not enter in the framework of \cite{OwSt91} if $D$ and $F$ are given by \eqref{eq:D-F} with $m>0$ and $n>2$,
because $D$ and $F$ are degenerate in the sense that $D(\pm1)=0$ and $F''(\pm1)=0$, respectively.
In \cite{FHLP20}, a lower bound of the form \eqref{eq:lower-theta} for a functional of the form \eqref{eq:energy} has been established,
but with the crucial assumptions $D(u)>0$ and $F''(\pm1)>0$.
Then, again if $D$ and $F$ are given by \eqref{eq:D-F} or \eqref{eq:D-F-onezero} with $m>0$ and $n>2$,
then the energy \eqref{eq:energy} does not satisfy the assumptions needed in \cite[Proposition 2.4]{FHLP20}.
Here, we extend the results of \cite{FHLP20}, by showing that the remainder $\theta$ in \eqref{eq:lower-theta} 
strictly depends on the degrees of the degeneracy of $D$ and $F$ in $\pm1$.
In particular, if $n=m+2$, then $\theta(\e)$ is exponentially small as $\e\to0^+$ (cfr. Proposition \ref{prop:lower} or Proposition \ref{prop:lower2}), 
while it is algebraically small when $n>m+2$ (cfr. Proposition \ref{prop:lower_deg} and Remark \ref{rem:alg}).

To start with, we prove the lower bound in the case $n=m+2$, generalizing the results of the case $m=0$, $n=2$, see \cite{FHLP20} or \cite{FLM19}.

\begin{prop}\label{prop:lower}
Consider the functional \eqref{eq:energy} with $D, F$ given by \eqref{eq:D-F}.
Fix $v$ as in \eqref{eq:vstruct} and $r>0$ such that
\begin{equation}\label{eq:r}
	h_i+r<h_{i+1}-r, \ \hbox{ for}\ i=1,\dots,N, \qquad   a\leq h_1-r,\qquad h_N+r\leq b.
\end{equation}
If $m>2$ and $n=m+2$ in \eqref{eq:D-F}, then for all $A\in(0,4r)$, there exist $\e_0,C,\delta>0$ such that if $u\in H^1(a,b)$ satisfies
\begin{equation}\label{eq:u-v}
	\|u-v\|_{{}_{L^1}}\leq\delta,
\end{equation}
then for any $\e\in(0,\e_0)$,
\begin{equation}\label{eq:lower}
	E_\varepsilon[u]\geq N\gamma-C\exp\left\{-A\sqrt{n}/\varepsilon\right\},
\end{equation}
where $\gamma$ is defined in \eqref{eq:gamma}.

If $m\in(1,2]$ and $n=m+2$ in \eqref{eq:D-F}, then for all $A\in(0,2r)$, there exist $\e_0,C,\delta>0$ such that if $u\in H^1(a,b)$ satisfies \eqref{eq:u-v},
then for any $\e\in(0,\e_0)$,
\begin{equation}\label{eq:lower2}
	E_\varepsilon[u]\geq N\gamma-C\exp\left\{-A/\varepsilon\right\}.
\end{equation}
Finally, in the case $1<m\leq n=2$, we have the same lower bound \eqref{eq:lower2} for all $A\in(0,2^\frac{2-m}2r)$
provided that $u\in H^1(a,b)$ satisfies \eqref{eq:u-v} with a sufficiently small $\delta$.
\end{prop}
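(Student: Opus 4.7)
My plan is a localization argument in the spirit of Bronsard--Kohn. Since the claim is vacuous when $E_\e[u] \geq N\gamma$, I may assume henceforth that $E_\e[u] \leq N\gamma$; the task is then to extract the ``missing'' amount $\theta(\e)$ from the slack in the Young-type inequality \eqref{eq:ineq} on well-chosen sub-intervals surrounding each jump point $h_i$ of $v$.

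First, for each $i$, I would produce a pair of points $q_i^- \in [h_i - r, h_i)$ and $q_i^+ \in (h_i, h_i + r]$ at which $u$ is close to the ``pure phases'' $\pm 1$. Combining the $L^1$-smallness \eqref{eq:u-v}, the $1$D Sobolev embedding $H^1(a,b) \hookrightarrow C[a,b]$, and a pigeonhole argument on short subintervals near the endpoints $h_i \pm r$ yields such points with $|u(q_i^\pm) \mp 1| \leq \eta_0$, where $\eta_0 = \eta_0(\delta,\e)$ is only polynomially small in $\delta$. The crucial and most delicate part of the argument is then to \emph{upgrade} $\eta_0$ to a genuinely exponentially small quantity $\eta$; this is where the three subcases branch.

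For this exponential refinement I would exploit the identity
\[
\frac{\e}{2} D(u) u_x^2 + \frac{F(u)}{\e} - \sqrt{2D(u)F(u)}\,|u_x| = \tfrac{1}{2}\bigl(\sqrt{\e D(u)}\,|u_x| - \sqrt{2F(u)/\e}\bigr)^2 \geq 0,
\]
which measures the defect of $u$ from the standing-wave ODE $\e \Phi' = \sqrt{2F(\Phi)/D(\Phi)}$ of Proposition \ref{prop:standing}. A Gr\"onwall-type comparison, integrating this defect from the algebraically close point $q_i^\pm$ towards the endpoints $h_i \pm r$, transfers the decay of the standing wave to $u$ itself. In the critical regime $n = m+2$ one has $1 - |\Phi(x)| \leq c_1 e^{-c_2|x|}$ with $c_2 = 2/(\sqrt{n}\e)$, so a window of length $\rho$ between the good point and $h_i$ yields $\eta \leq C e^{-2\rho/(\sqrt{n}\e)}$. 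The three subcases correspond to three admissible windows: when $m > 2$ the stronger regularity of $\sqrt{D/F}$ near $\pm 1$ permits an effective length $\rho$ close to $2r$, producing the exponent $A\sqrt{n}/\e$ with $A < 4r$; for $m \in (1,2]$ the weaker integrability forces a reduced effective window $\rho \sim r$, yielding only $A/\e$ with $A < 2r$; and when $1 < m \leq n = 2$ one is in the compacton regime $n < m+2$ of Proposition \ref{prop:standing}, where the standing wave reaches $\pm 1$ at finite distance $\omega_\e$ and a direct explicit computation gives the sharper constant $2^{(2-m)/2}$.

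Once exponentially close endpoints $q_i^\pm$ are secured, the Young inequality \eqref{eq:ineq} applied on $[q_i^-, q_i^+]$ gives
\[
E_\e[u;[q_i^-,q_i^+]] \geq \int_{u(q_i^-)}^{u(q_i^+)} \sqrt{2D(s)F(s)}\,ds \geq \gamma - 2\int_{1-\eta}^{1} \sqrt{2D(s)F(s)}\,ds,
\]
and the deficit on the right is controlled by $C\eta^{m+2}$ because $\sqrt{2DF} \sim (1-s)^{m+1}$ near $s = 1$ when $n=m+2$. Plugging in $\eta \leq C e^{-B/\e}$ and using $m+2 = n$ converts the deficit into exactly the exponential factors appearing in \eqref{eq:lower} and \eqref{eq:lower2}. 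Summing over $i = 1,\ldots,N$ and noting that $E_\e$ is non-negative on the complementary intervals closes the argument. The principal obstacle is, without question, the Gr\"onwall-type ODE comparison needed for the exponential refinement in the presence of the degeneracies of $D$ and $F$ at $\pm 1$, and it is the careful tracking of how the degeneracy exponent $m$ limits the usable window length that forces the splitting into the three subcases.
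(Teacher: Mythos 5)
Your overall architecture (localize around each jump $h_i$, find points near $h_i\pm r$ where $u$ is close to $\pm1$, apply the Young inequality \eqref{eq:ineq} on the middle interval, and control the tail deficits $\int_{1-\eta}^1\sqrt{2DF}\,ds\sim\eta^{n}$) matches the paper up to the decisive step, but the decisive step has a genuine gap. You propose to upgrade the merely polynomially small quantity $|u(q_i^\pm)\mp1|$ to an exponentially small one by a ``Gr\"onwall-type comparison'' on the pointwise defect $\frac{\e}{2}Du_x^2+\frac{F}{\e}-\sqrt{2DF}\,|u_x|\ge 0$. This does not work as stated: the only information available is that the \emph{integral} of this defect is small, and an $L^1$-small defect gives no pointwise control on $u$ (a near-plateau at height $1-\eta$ has tiny defect per unit length yet keeps $u$ far from $1$; whether such configurations are excluded by $E_\e[u]\le N\gamma$ is exactly the quantitative content you would need to prove, and the Gr\"onwall heuristic does not supply it). In fact no statement of the form ``$u$ itself is exponentially close to $\pm1$ somewhere'' is needed, and the paper never proves one.

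What the paper does instead, and what your argument is missing, is a \emph{variational comparison on the outer intervals}: after locating $r_\pm\in(0,\hat r)$ with $|u(h_i+r_+)-1|<\rho_2$ (only $O(1)$-close), it splits the energy as in \eqref{eq:Pe} into the middle contribution $\ge\gamma-I_3-I_4$ plus the outer contributions $I_1,I_2$, and then bounds $I_1$ from below by the energy of the \emph{constrained minimizer} $z$ on $[h_i+r_+,h_i+r]$ with $z(h_i+r_+)=u(h_i+r_+)$ and free right endpoint. For $z$ one has the Euler--Lagrange ODE, the first integral \eqref{eq:stima-z'}, and the differential inequality $\psi''\ge\frac{\mu^2}{n\e^2}\psi$ for $\psi=(z-1)^2$, whence a linear comparison/maximum principle gives $|z(h_i+r)-1|\lesssim e^{-A/(2\sqrt n\e)}$ and therefore $I_1\ge I_4-Ce^{-A\sqrt n/\e}$; since $u$ is an admissible competitor, the same bound holds for $I_1$ computed from $u$. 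This is how the tails are compensated without ever showing $u$ is exponentially close to $\pm1$. Note also that the three subcases are not governed by ``usable window length'' or regularity of $\sqrt{D/F}$: they come from the sign and size of the coefficient $\frac{1+z(1-m)}{1+z}$ of $(z')^2$ in the expression \eqref{eq:psi''} for $\psi''$ (non-positive for $m>2$ after shrinking $\rho_1$, non-negative for $m\le2$), which fixes the constant in the differential inequality and hence the admissible range of $A$; in particular the case $1<m\le n=2$ gives the \emph{smaller} range $A<2^{(2-m)/2}r$, not a sharper one, and it is handled by the same minimizer argument, not by the compacton structure of the standing wave.
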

\begin{proof}
First of all, we prove the case $n=m+2$ with $m>2$.
Fix $u\in H^1(a,b)$ satisfying \eqref{eq:u-v}, with $v,r$ as in \eqref{eq:vstruct} and \eqref{eq:r}.
Take $\hat r\in(0,r)$ and $\rho_1>0$ so small that 
\begin{equation}\label{eq:cond-A}
	A\leq\sqrt{2(2-\rho_1)\left(4-3\rho_1\right)}(r-\hat r).
\end{equation}
Then, choose $0<\rho_2<\rho_1$ sufficiently small that
\begin{equation}\label{eq:forrho2}
\begin{aligned}
	\int_{1-\rho_1}^{1-\rho_2}\sqrt{2D(s)F(s)}\,ds&>\int_{1-\rho_2}^{1}\sqrt{2D(s)F(s)}\,ds,  \\
	\int_{-1+\rho_2}^{-1+\rho_1}\sqrt{2D(s)F(s)}\,ds&> \int_{-1}^{-1+\rho_2}\sqrt{2D(s)F(s)}\,ds.
	\end{aligned}
\end{equation}
The choices of the constants $\hat r, \rho_1,\rho_2$ will be clear later on the proof.

We focus our attention on $h_i$, one of the discontinuous points of $v$ and, to fix ideas, 
let $v(h_i-r)=-1$, $v(h_i+r)=1$, the other case being analogous.
We can choose $\delta>0$ so small in \eqref{eq:u-v} so that there exist $r_+$ and $r_-$ in $(0,\hat r)$ such that
\begin{equation}\label{2points}
	|u(h_i+r_+)-1|<\rho_2, \qquad \quad \mbox{ and } \qquad \quad |u(h_i-r_-)+1|<\rho_2.
\end{equation}
Indeed, assume by contradiction that $|u-1|\geq\rho_2$ throughout $(h_i,h_i+\hat r)$; then
\begin{equation*}
	\delta\geq\|u-v\|_{{}_{L^1}}\geq\int_{h_i}^{h_i+\hat r}|u-v|\,dx\geq\hat r\rho_2,
\end{equation*}
and this leads to a contradiction if we choose $\delta\in(0,\hat r\rho_2)$.
Similarly, one can prove the existence of $r_-\in(0,\hat r)$ such that $|u(h_i-r_-)+1|<\rho_2$.

Now, we consider the interval $(h_i-r,h_i+r)$ and claim that
\begin{equation}\label{eq:claim}
	\int_{h_i-r}^{h_i+r}\left[\frac\e2D(u)u_x^2+\frac{F(u)}\e\right]\,dx\geq \gamma-\tfrac{C}N\exp\left\{-A\sqrt{n}/\varepsilon\right\},
\end{equation}
for some $C>0$ independent on $\e$.
If $u(h_i+r_+)\geq1$ and $u(h_i-r_-)\leq-1$, then from \eqref{eq:ineq} we can conclude that
\begin{equation*}
	\int_{h_i-r_-}^{h_i+r_+}\left\{\frac\e2[D(u)u_x]^2+\frac{F(u)}\e\right\}\,dx\geq \gamma,
\end{equation*}
which implies \eqref{eq:claim}.
On the other hand, notice that in general we have
\begin{align}
	\int_{h_i-r}^{h_i+r}\left[\frac\e2D(u)u_x^2+\frac{F(u)}\e\right]\,dx & 
	\geq \int_{h_i+r_+}^{h_i+r}\left[\frac\e2D(u)u_x^2+\frac{F(u)}\e\right]\,dx\notag \\ 
	& \quad + \int_{h_i-r}^{h_i-r_-}\left[\frac\e2D(u)u_x^2+\frac{F(u)}\e\right]\,dx \notag \\
	& \quad +\int_{-1}^{1}\sqrt{2D(s)F(s)}\,ds\notag\\
	&\quad-\int_{-1}^{u(h_i-r_-)}\sqrt{2D(s)F(s)}\,ds \notag \\
	& \quad-\int_{u(h_i+r_+)}^{1}\sqrt{2D(s)F(s)}\,ds\notag \\
	&=:I_1+I_2+\gamma-I_3-I_4, \label{eq:Pe}
\end{align}
where we again used \eqref{eq:ineq}. 
Regarding $I_1$, recall that $1-\rho_2<u(h_i+r_+)<1$ and consider the unique minimizer $z:[h_i+r_+,h_i+r]\rightarrow\R$ 
of $I_1$ subject to the boundary condition $z(h_i+r_+)=u(h_i+r_+)$.
If the range of $z$ is not contained in the interval $(1-\rho_1,1+\rho_1)$, then from \eqref{eq:ineq}, it follows that
\begin{equation}\label{E>fi}
	\int_{h_i+r_+}^{h_i+r}\left[\frac\e2D(z)z_x^2+\frac{F(z)}\e\right]\,dx>\int_{u(h_i+r_+)}^{1}\sqrt{2D(s)F(s)}\,ds=I_4,
\end{equation}
where, in the last inequality, we used the first estimate of \eqref{eq:forrho2} and, so the smallness of $r_+$ and $\rho_2$.
Suppose, on the other hand, that the range of $z$ is contained in the interval $(1-\rho_1,1+\rho_1)$. 
Then, the Euler-Lagrange equation for $z$ is
\begin{align*}
	&\e D(z)z''=\e^{-1}F'(z)-\frac\e2D'(z)(z')^2, \quad \qquad x\in(h_i+r_+,h_i+r),\\
	&z(h_i+r_+)=u(h_i+r_+), \quad \qquad z'(h_i+r)=0.
\end{align*}
For later use, we multiply by $z'$ the latter equation to get
\begin{equation*}
	\left[\frac{\e^2}2D(z)(z')^2-F(z)\right]'=0,
\end{equation*}
which implies
\begin{equation*}
	\frac{\e^2}2D(z(x))z'(x)^2=F(z(x))-F(z(h_i+r)),
\end{equation*}
and so,
\begin{equation}\label{eq:stima-z'}
	\frac{\e^2}2D(z(x))z'(x)^2\leq F(z(x)).
\end{equation}
Denoting by $\psi(x):=(z(x)-1)^2$, we have $\psi'=2(z-1)z'$ and 
\begin{equation*}
	\psi''=2(z-1)z''+2(z')^2=\frac{2F'(z)}{\varepsilon^2 D(z)}(z-1)+\left[\frac{2D(z)-(z-1)D'(z)}{D(z)}\right](z')^2.
\end{equation*}
By using the explicit formulas for $D$ and $F$, erasing the modulus because the range of the minimizer $z$ is contained in $(1-\rho_1,1)$, we obtain
\begin{align}
	\psi''&=-\frac{2z(1-z^2)^{n-1}}{\e^2(1-z^2)^{m}}(z-1)+\left[\frac{2(1-z)^{m}(1+z)^{m}-2mz(1+z)^{m-1}(1-z)^{m}}{(1-z^2)^{m}}\right](z')^2 \notag\\
	&=\frac{2z(1+z)^{n-m-1}(1-z)^{n-m}}{\e^2}+2\left[\frac{1+z(1-m)}{1+z}\right](z')^2.\label{eq:psi''}
\end{align}
Using again that the range of $z$ is contained in the interval $(1-\rho_1,1)$, we can choose $\rho_1$ sufficiently small so that
the coefficient of $(z')^2$ is non-positive (because $m>2$) and, by using \eqref{eq:stima-z'}, we deduce
\begin{align}
	\psi''&\geq\frac{2z(1+z)^{n-m-1}(1-z)^{n-m}}{\e^2}+\frac{4}{\e^2}\left[\frac{1+z(1-m)}{1+z}\right]\frac{F(z)}{D(z)}\notag \\
	&=\frac{2z(1+z)^{n-m-1}(1-z)^{n-m}}{\e^2}+\frac{2}{n\e^2}\left[\frac{1+z(1-m)}{1+z}\right](1-z^2)^{n-m}\notag \\
	&=\frac{2(1+z)^{n-m-1}(1-z)^{n-m}}{n\e^2}\left[1+z\left(n-m+1\right)\right]\notag \\
	&=\frac{2(1+z)^{n-m-1}}{n\e^2}\left[1+z\left(n-m+1\right)\right]\psi^\frac{n-m}2\notag\\
	&\geq\frac{2(2-\rho_1)\left(4-3\rho_1\right)}{n\e^2}\psi, \label{eq:n-m}
\end{align}
where we used $n=m+2$.
By using \eqref{eq:cond-A} and denoting by $\mu=A/(r-\hat r)$, we get from \eqref{eq:n-m}
\begin{align*}
	\psi''(x)-\frac{\mu^2}{n\varepsilon^2}\psi(x)\geq0, \quad \qquad x\in(h_i+r_+,h_i+r),\\
	\psi(h_i+r_+)=(u(h_i+r_+)-1)^2, \quad \qquad \psi'(h_i+r)=0.
\end{align*}
We compare $\psi$ with the solution $\hat \psi$ of
\begin{align*}
	\hat\psi''(x)-\frac{\mu^2}{n\varepsilon^2}\hat\psi(x)=0, \quad \qquad x\in(h_i+r_+,h_i+r),\\
	\hat\psi(h_i+r_+)=(u(h_i+r_+)-1)^2, \quad \qquad \hat\psi'(h_i+r)=0,
\end{align*}
which can be explicitly calculated to be
\begin{equation*}
	\hat\psi(x)=\frac{(u(h_i+r_+)-1)^2}{\cosh\left[\frac{\mu}{\sqrt{n}\varepsilon}(r-r_+)\right]}\cosh\left[\frac{\mu}{\sqrt{n}\varepsilon}(x-h_i-r)\right].
\end{equation*}
By the maximum principle, $\psi(x)\leq\hat\psi(x)$ so, in particular,
\begin{equation*}
	\psi(h_i+r)\leq\frac{(u(h_i+r_+)-1)^2}{\cosh\left[\frac{\mu}{\sqrt{n}\varepsilon}(r-r_+)\right]}\leq2\exp(-A/\sqrt{n}\varepsilon)(u(h_i+r_+)-1)^2,
\end{equation*}
where we used $\mu=A/(r-\hat r)$ and $r_+\in(0,\hat r)$.
Then, we have 
\begin{equation}\label{|z-v+|<exp}
	|z(h_i+r)-1|\leq\sqrt2\exp(-A/2\sqrt{n}\varepsilon)\rho_2.
\end{equation}
Using \eqref{|z-v+|<exp}, we deduce
\begin{equation} \label{fi<exp}
	\left|\int_{z(h_i+r)}^{1}\sqrt{2F(s)D(s)}\,ds\right|\leq C_n\exp\left\{-A\sqrt{n}/\varepsilon\right\},
\end{equation}
where the positive constant $C_n$ depends on $n$.
From \eqref{eq:ineq}-\eqref{fi<exp} it follows that, for some constant $C>0$, 
\begin{align}
	\int_{h_i+r_+}^{h_i+r}\left[\frac\e2D(z)z_x^2+\frac{F(z)}\e\right]\,dx &\geq \left|\int_{z(h_i+r_+)}^{1}\sqrt{2F(s)D(s)}\,ds\,-\right.\nonumber \\
	&\qquad \qquad\left.\int_{z(h_i+r)}^{1}\sqrt{2F(s)D(s)}\,ds\right| \nonumber\\
	& \geq I_4-\tfrac{C}{2N}\exp\left\{-A\sqrt{n}/\varepsilon\right\}. \label{E>fi-exp}
\end{align}
Combining \eqref{E>fi} and \eqref{E>fi-exp}, we get that the constrained minimizer $z$ of the proposed variational problem satisfies
\begin{equation*}	
	\int_{h_i+r_+}^{h_i+r}\left[\frac\e2D(z)z_x^2+\frac{F(z)}\e\right]\,dx \geq I_4-\tfrac{C}{2N}\exp\left\{-A\sqrt{n}/\varepsilon\right\}.
\end{equation*}
The restriction of $u$ to $[h_i+r_+,h_i+r]$ is an admissible function, so it must satisfy the same estimate and we have
\begin{equation}\label{eq:I1}
	I_1\geq I_4-\tfrac{C}{2N}\exp\left\{-A\sqrt{n}/\varepsilon\right\}.
\end{equation}
The term $I_2$ on the right hand side of \eqref{eq:Pe} is estimated similarly by analyzing the interval $[h_i-r,h_i-r_-]$ 
and using the second condition of \eqref{eq:forrho2} to obtain the corresponding inequality \eqref{E>fi}.
The obtained lower bound reads:
\begin{equation}\label{eq:I2}	
	I_2\geq I_3-\tfrac{C}{2N}\exp\left\{-A\sqrt{n}/\varepsilon\right\}.
\end{equation}
Finally, by substituting \eqref{eq:I1} and \eqref{eq:I2} in \eqref{eq:Pe}, we deduce \eqref{eq:claim}. 
Summing up all of these estimates for $i=1, \dots, N$, namely for all transition points, we end up with
\begin{equation*}
	E_\varepsilon[u]\geq\sum_{i=1}^N\int_{h_i-r}^{h_i+r}\left[\frac\e2D(u)u_x^2+\frac{F(u)}\e\right]\,dx\geq N\gamma-C\exp\left\{-A\sqrt{n}/\varepsilon\right\},
\end{equation*}
and the proof of \eqref{eq:lower} is complete.

The proof of the cases $n=m+2$ with $m\in(1,2]$ and $1<m\leq n=2$ are very similar.
In the case $n=m+2$ with $m\in(1,2]$, the only difference is that instead of \eqref{eq:cond-A}, 
one has to choose $\hat r\in(0,r)$ and $\rho_1>0$ so small that $A\leq\sqrt{2(1-\rho_1)(2-\rho_1)}(r-\hat r)$
and then, since the coefficient behind $(z')^2$ in \eqref{eq:psi''} is non-negative, (being the range of $z$ contained in $(1-\rho_1,1)$ and $m\leq2$),
we can conclude
\begin{equation*}
	\psi''\geq\frac{2z(1+z)(1-z)^{2}}{\e^2}\geq2(1-\rho_1)(2-\rho_1)\psi\geq \frac{\mu^2}{\e^2}\psi,
\end{equation*}
where $\mu=A/(r-\hat r)$.

Similarly, in the case $1<m\leq n=2$, one can choose $\hat r\in(0,r)$ and $\rho_1>0$ so small that $A\leq\sqrt{2(1-\rho_1)(2-\rho_1)^{1-m}}(r-\hat r)$
and conclude that
\begin{equation*}
	\psi''\geq\frac{2z(1+z)^{1-m}(1-z)^{2-m}}{\e^2}\geq2(1-\rho_1)(2-\rho_1)^{1-m}\psi^{1-\frac m2}\geq \frac{\mu^2}{\e^2}\psi,
\end{equation*}
where $\mu=A/(r-\hat r)$.
Once the estimate $\psi''\geq\frac{\mu^2}{\e^2}\psi$ is established, the proof of the lower bound \eqref{eq:lower2} is obtained as the one of \eqref{eq:lower}.
\end{proof}

\begin{rem}
In Proposition \ref{prop:lower}, we prove the lower bound for the energy \eqref{eq:lower-theta} 
with an \emph{exponentially small term} $\theta$ as $\e\to0^+$ in the cases $n=m+2$ and $1<m\leq n=2$.
A similar lower bound can be easily extended to the case $0<m-2\leq n\leq m+2$:
for such a purpose, in \eqref{eq:n-m} we used only at the last passage that $n=m+2$ so that the interested reader may easily verify that 
an estimate of the type $\psi''\geq\frac{\mu^2}{\e^2}\psi$ can be obtained in general for $0<m-2\leq n\leq m+2$.
\end{rem}

\begin{rem}
The constant $\gamma$ in \eqref{eq:gamma} can be explicitly computed when $D$ and $F$ are given by \eqref{eq:D-F} and it is given by
\begin{equation*}
	\gamma=\gamma_{n,m}=\frac{1}{\sqrt{n}}\int_{-1}^{1}(1-s^2)^\frac{n+m}2\,ds=\frac{\sqrt\pi\,\Gamma(\frac{n+m}2)}{\sqrt n\,\Gamma(\frac{n+m+3}2)},
\end{equation*}
where $\Gamma$ is the Euler's gamma function.
In particular, when $\frac{m+n}2\in\mathbb{N}$ we have
$$\gamma_{n,m}=\frac{2[2^\frac{n+m}2(\frac{n+m}2)!]^2}{\sqrt{n}(n+m+1)!},$$
and if $n=m+2$ with $m\in\mathbb N$, then
$$\gamma_{n,n-2}=\frac{[2^{n}(n-1)!]^2}{2\sqrt{n}(2n-1)!}.$$
\end{rem}

For simplicity, in Proposition \ref{prop:lower} we consider the case of even functions $D$ and $F$,
but the result can be extended to the general case
$$D(u)=|1-u|^{m_1}|1+u|^{m_2}, \qquad \qquad F(u)=|1-u|^{n_1}|1+u|^{n_2}.$$
In particular, in the case $m_1\in(1,2]$, $m_2=0$ and $n_1=n_2=2$ (that is \eqref{eq:D-F-onezero} with $m\in(1,2]$ and $n=2$), we obtain the following result.

\begin{prop}\label{prop:lower2}
Consider the functional 
\begin{equation*}
	E_\e[u]:=\int_a^b\left[\frac\e2|1-u|^{m}u_x^2+\frac{|1-u^2|^2}{4\e}\right]\,dx,
\end{equation*}
with $m\in(1,2]$, and fix a piecewise constant function $v$ and $r>0$ as in \eqref{eq:vstruct}-\eqref{eq:r}.
Then for all $A\in(0,2^\frac{2-m}2r)$, there exist $\e_0,C,\delta>0$ such that if $u\in H^1(a,b)$ satisfies \eqref{eq:u-v},
then for any $\e\in(0,\e_0)$, we have
\begin{equation}\label{eq:1-deg-cm}
	E_\varepsilon[u]\geq N\gamma-C\exp\left\{-A/\varepsilon\right\},
\end{equation}
where $\displaystyle \gamma:=\frac{1}{\sqrt2}\int_{-1}^1(1-s)^\frac{m+2}2(1+s)\,ds=\frac{4\cdot2^{\frac{m+5}2}}{(m+4)(m+6)}$.
\end{prop}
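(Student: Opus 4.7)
The plan is to adapt, mutatis mutandis, the argument used to prove Proposition \ref{prop:lower} in the regime $1 < m \leq n = 2$, the genuine novelty being the analysis at the well $u=-1$, where $D$ is now non-degenerate. I fix $u \in H^1(a,b)$ satisfying \eqref{eq:u-v} and focus on a single transition point $h_i$, assuming without loss of generality that $v = -1$ on $(h_i - r, h_i)$ and $v = +1$ on $(h_i, h_i + r)$ (the reverse orientation is treated identically, since both one-sided analyses apply). I choose $\hat r\in(0,r)$ and $0 < \rho_2 < \rho_1$ small and use the $L^1$-smallness to produce $r_\pm \in (0,\hat r)$ with $|u(h_i + r_+) - 1|<\rho_2$ and $|u(h_i - r_-)+1|<\rho_2$, together with analogs of \eqref{eq:forrho2}. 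The decomposition \eqref{eq:Pe}, combined with \eqref{eq:ineq} and the usual constrained-minimizer reduction, then reduces matters to bounding the energy of a minimizer $z$ on each outer strip $[h_i + r_+, h_i + r]$ and $[h_i - r, h_i - r_-]$ whose range is confined to a thin neighborhood of $\pm 1$.

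On the right strip, where $z$ lies in $(1-\rho_1,1)$ near the degenerate well of $D$, I set $\psi := (1-z)^2$ and follow the $1 < m \leq 2$ case of Proposition \ref{prop:lower}: a direct manipulation of the Euler-Lagrange equation with $D(z)=(1-z)^m$ and $F(z)=(1-z^2)^2/4$ gives
\begin{equation*}
    \psi'' = (2-m)(z')^2 + \frac{2z(1+z)(1-z)^{2-m}}{\e^2}.
\end{equation*}
Since $m \leq 2$, the $(z')^2$ coefficient is non-negative and can be dropped; since $\psi \leq \rho_1^2 < 1$ and $(2-m)/2 \leq 1/2$, one has $(1-z)^{2-m} = \psi^{(2-m)/2} \geq \psi$. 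This yields $\psi'' \geq 2(1-\rho_1)(2-\rho_1)\,\e^{-2}\psi$, i.e.\ exponential decay at rate close to $2/\e$. Since $2 > 2^{(2-m)/2}$ for every $m > 0$, this strip does not constrain the admissible range of $A$.

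The substantive new step is on the left strip, where $z \in (-1,-1+\rho_1)$ sits away from the degenerate point of $D$. Taking $\psi := (1+z)^2$ and substituting $z''$ from the Euler-Lagrange equation, I would obtain
\begin{equation*}
    \psi'' = \left[2 + \frac{m(1+z)}{1-z}\right](z')^2 + \frac{2(-z)(1+z)^2(1-z)^{1-m}}{\e^2}.
\end{equation*}
On the range of $z$ both the bracketed coefficient and the prefactor $-z$ are non-negative, so the $(z')^2$-term may be dropped. Using $-z \geq 1-\rho_1$, $(1+z)^2 = \psi$, and the key inequality $(1-z)^{1-m} \geq 2^{1-m}$ --- which holds precisely because $1-m \leq 0$ (so $a \mapsto a^{1-m}$ is non-increasing on $[0,2]$) and $1-z \leq 2$ on the range, i.e.\ exactly where the hypothesis $m>1$ enters --- I conclude
\begin{equation*}
    \psi'' \geq \frac{2^{2-m}(1-\rho_1)}{\e^2}\psi.
\end{equation*}
Setting $\mu := A/(r-\hat r)$ with $\mu^2 \leq 2^{2-m}(1-\rho_1)$ (possible for any $A < 2^{(2-m)/2}r$ by shrinking $\hat r$ and $\rho_1$), a maximum-principle comparison against the cosh-solution of $\hat\psi'' = (\mu^2/\e^2)\hat\psi$ with matched boundary data gives $|z(h_i - r)+1| \leq \sqrt{2}\,\rho_2 \exp(-A/(2\e))$, exactly as in Proposition \ref{prop:lower}.

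Plugging both one-sided estimates back into \eqref{eq:Pe} through the analog of \eqref{E>fi-exp}, and summing over $i = 1,\dots,N$, then yields \eqref{eq:1-deg-cm}. The principal obstacle is the $\psi''$ bound on the left strip: because $D(-1) = 2^m \neq 0$, the ODE for $\psi$ has an entirely different algebraic structure from anything in Proposition \ref{prop:lower}, and it is precisely the constant $2^{1-m}$ arising from $D(z) \approx 2^m$ (rather than $D(z) \approx 2^m(1+z)^m$ in the symmetric setting) that reduces the admissible exponential rate from the $2/\e$ delivered by the degenerate well to $2^{(2-m)/2}/\e$, producing the sharp range $A \in (0, 2^{(2-m)/2}r)$.
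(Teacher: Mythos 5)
Your proposal is correct and follows exactly the route the paper intends: the paper states Proposition \ref{prop:lower2} without proof as the asymmetric extension of Proposition \ref{prop:lower}, and your argument reproduces that scheme — the decomposition \eqref{eq:Pe}, the constrained minimizer, the comparison function $\hat\psi$ — while correctly supplying the one genuinely new computation, namely the bound $\psi''\geq 2^{2-m}(1-\rho_1)\e^{-2}\psi$ near the non-degenerate well $u=-1$, whose constant $2^{1-m}$ (from $D(-1)=2^m$) is precisely what produces the admissible range $A\in(0,2^{\frac{2-m}{2}}r)$. Both your one-sided estimates check out ($\psi^{(2-m)/2}\geq\psi$ on the degenerate side, $(1-z)^{1-m}\geq 2^{1-m}$ on the non-degenerate side), and your observation that the right strip's faster rate $2/\e$ is not the binding constraint is accurate.
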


Next, we prove two lower bounds for the energy \eqref{eq:lower-theta} with an \emph{algebraically reminder} $\theta(\e)$:
the first one concerns the case \eqref{eq:D-F} and it is very important in the case $n>m+2$, 
the second one regards the case \eqref{eq:D-F-onezero}.

\begin{prop}\label{prop:lower_deg}
Consider the functional \eqref{eq:energy} with $D, F$ given by \eqref{eq:D-F}.
Let $v:(a,b)\rightarrow\{-1,+1\}$ a piecewise constant function with exactly $N$ discontinuities (as in \eqref{eq:vstruct}) and define the sequence
\begin{equation}\label{eq:exp_alg}
	\begin{cases}
		k_1=0,\\
		k_2:=\alpha, \\
         	k_{j+1}:=\alpha(k_{j}+1), \qquad j\geq2, 
	\end{cases}
	\quad \mbox{ where }  \quad \alpha:=\displaystyle\frac{n+m+2}{2n}.
\end{equation}
Then, for any $j\in \mathbb N$ there exist constants $\delta_j>0$ and $C_j>0$ such that if $w\in H^1$ satisfies
\begin{equation}\label{|w-v|_L^1<delta_l}
	\|w-v\|_{L^1}\leq\delta_j,
\end{equation}
and
\begin{equation}\label{E_p(w)<Nc0+eps^l}
	E_\e[w]\leq N\gamma+\varepsilon^{k_{j}},
\end{equation}
with $\varepsilon$ sufficiently small, then
\begin{equation}\label{E_p(w)>Nc0-eps^l}
	E_\e[w]\geq N\gamma-C_j\varepsilon^{k_{j+1}},
\end{equation}
where $\gamma$ is defined in \eqref{eq:gamma}.
\end{prop}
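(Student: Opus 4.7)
The plan is to prove the result by induction on $j$, paralleling the strategy of Proposition~\ref{prop:lower} but replacing the linear (exponential) ODE comparison used there with a super-linear one that yields algebraic decay, and bootstrapping the estimates via the energy upper bound at each level. For each discontinuity $h_i$ of $v$, the $L^1$-closeness \eqref{|w-v|_L^1<delta_l} provides, by the mean-value contradiction argument of \eqref{2points}, intermediate points $h_i \pm r_\pm$ with $|w(h_i + r_+) - 1|$ and $|w(h_i - r_-) + 1|$ smaller than any prescribed $\rho > 0$ (upon taking $\delta_j$ small enough). Splitting $\int_{h_i - r}^{h_i + r}[\tfrac{\e}{2}D(w)w_x^2 + F(w)/\e]\,dx$ into the bulk $[h_i - r_-, h_i + r_+]$ and the two tails, and applying Young's inequality \eqref{eq:ineq} on the bulk, reduces the problem to controlling the ``gap'' integrals $\int_{-1}^{w(h_i - r_-)}\sqrt{2DF}\,ds$ and $\int_{w(h_i + r_+)}^{1}\sqrt{2DF}\,ds$ against the tail energies.

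\textbf{Tail ODE.} On each tail, the variational minimizer $z$ with Dirichlet datum $z(h_i + r_+) = w(h_i + r_+)$ and free Neumann condition $z'(h_i + r) = 0$ satisfies, provided its range lies in $(1 - \rho_1, 1)$, a derivation as in \eqref{eq:psi''} but now terminating --- for $\psi := (z - 1)^2$ and $p := (n - m)/2 > 1$ --- at the super-linear inequality $\psi''(x) \geq (c/\e^2)\psi(x)^{p}$ with end-condition $\psi'(h_i + r) = 0$. Multiplying by $-\psi' \geq 0$ and integrating from $x$ to $h_i + r$ yields $\tfrac{1}{2}\psi'(x)^2 \geq \tfrac{c}{(p + 1)\e^2}(\psi(x)^{p + 1} - \eta^{p + 1})$ with $\eta := \psi(h_i + r)$. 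A further integration, via the substitution $\psi = \eta\sigma$, gives the implicit relation
\[
r - r_+ \geq \frac{C\e}{\eta^{(p - 1)/2}}\int_1^{\psi_+/\eta}\frac{d\sigma}{\sqrt{\sigma^{p + 1} - 1}}, \qquad \psi_+ := \psi(h_i + r_+).
\]
Since $\int_1^\infty d\sigma/\sqrt{\sigma^{p + 1} - 1} < \infty$ (a consequence of $p > 1$), this yields an algebraic bound on $\eta$ in terms of $\psi_+$ and $\e$, replacing the exponential bound of Proposition~\ref{prop:lower}. The alternative case, in which $z$ exits $(1 - \rho_1, 1 + \rho_1)$, is handled via a direct variant of \eqref{E>fi} that gives a strictly better lower bound without error term.

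\textbf{Bootstrap, inductive step, and main obstacle.} The hypothesis $E_\e[w] \leq N\gamma + \e^{k_j}$ combined with the lower bound at level $j - 1$ (the induction hypothesis, trivial for $j = 1$) forces the gap at each transition to satisfy $\int_{w(h_i + r_+)}^{1}\sqrt{2DF}\,ds \leq C\e^{k_j}$, which by $\sqrt{2DF(s)} \sim (1 - s)^{(n + m)/2}$ near $s = 1$ translates into the pointwise bound $\psi_+ \leq C\e^{4k_j/(n + m + 2)}$. Reinjecting this into the tail-ODE relation gives a tighter algebraic bound on $\eta$, which through the deficit formula $\int_{z(h_i + r)}^1\sqrt{2DF}\,ds \leq C\eta^{(n + m + 2)/4}$ produces the recursion $k_{j + 1} = \alpha(k_j + 1)$ with $\alpha = (n + m + 2)/(2n)$. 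The central difficulty is the precise exponent bookkeeping: one must treat separately the subregimes where $\psi_+/\eta$ is close to $1$ (so $\eta \approx \psi_+$) and where $\psi_+/\eta$ is large (so $\eta$ is much smaller than $\psi_+$), verify that the combined estimate yields exactly the claimed exponent, and identify the ``$+1$'' in the recursion as coming from the explicit $\e$-factor in the second integration of the ODE. The constants $\delta_j, C_j$ are allowed to depend on $j$ and may degrade as $j$ grows, which is acceptable for the subsequent application.
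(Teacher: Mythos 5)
Your route is genuinely different from the paper's. The paper never introduces a constrained minimizer or an ODE in the case $n>m+2$: its proof is an induction in which, at level $j$, the excess energy in the tail $(y_{j-1},b)$ is bounded by $C_j\e^{k_j}$ (combining the upper bound \eqref{E_p(w)<Nc0+eps^l} with the level-$(j-1)$ lower bound already secured on $(x_{j-1},y_{j-1})$), a Chebyshev/mean-value argument on $\int F(w)\,dx\le C_j\e^{k_j+1}$ over a set of measure at least $\delta_j$ produces a point $y_j$ with $F(w(y_j))\le C_j\e^{k_j+1}$, hence $1-w(y_j)\le C_j\e^{(k_j+1)/n}$, and the Young inequality \eqref{eq:ineq} on $(x_j,y_j)$ converts this into the deficit $C_j\e^{(k_j+1)(n+m+2)/(2n)}=C_j\e^{k_{j+1}}$; this is where both the ``$+1$'' and the factor $\alpha$ come from, and nothing beyond \eqref{eq:ineq} and measure theory is used. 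Your plan instead extends the minimizer/comparison technique of Proposition \ref{prop:lower} to the super-linear inequality $\psi''\ge c\e^{-2}\psi^{p}$, $p=(n-m)/2>1$. That idea is sound and, carried out correctly, gives $\eta=\psi(h_i+r)\le C\e^{2/(p-1)}=C\e^{4/(n-m-2)}$ and hence a tail deficit of order $\eta^{(n+m+2)/4}=O(\e^{\beta})$ with $\beta$ as in \eqref{eq:limitalpha}; since $k_{j+1}<\beta$ for every $j$, the Proposition would follow a fortiori, and in fact without ever invoking the hypothesis \eqref{E_p(w)<Nc0+eps^l}.

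There are, however, concrete problems in the execution. First, your key displayed relation points the wrong way: from $-\psi'\ge C\e^{-1}\sqrt{\psi^{p+1}-\eta^{p+1}}$ one obtains $\int_\eta^{\psi_+}(\psi^{p+1}-\eta^{p+1})^{-1/2}\,d\psi\ge C(r-r_+)/\e$, that is
\begin{equation*}
r-r_+\;\le\;\frac{\e}{C}\,\eta^{-(p-1)/2}\int_1^{\psi_+/\eta}\frac{d\sigma}{\sqrt{\sigma^{p+1}-1}},
\end{equation*}
and it is only from this ``$\le$'' version, together with the finiteness of $\int_1^{\infty}(\sigma^{p+1}-1)^{-1/2}d\sigma$, that the algebraic smallness of $\eta$ follows; with the ``$\ge$'' you wrote, the right-hand side is merely bounded below and nothing can be concluded. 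Second, the bootstrap is both unjustified and unnecessary: the point $h_i+r_+$ is produced by the $L^1$-closeness argument and satisfies only $1-w(h_i+r_+)<\rho_2$ (a fixed constant), so the claimed bound $\psi_+\le C\e^{4k_j/(n+m+2)}$ at that specific point does not follow from the stated hypotheses --- to obtain an $\e$-dependent bound one must select a new point by an energy argument, which is exactly what the paper's induction does; and in any case the ODE bound on $\eta$ is independent of $\psi_+$, so ``reinjecting'' $\psi_+$ cannot generate the recursion $k_{j+1}=\alpha(k_j+1)$: your method, done correctly, jumps directly to the limiting exponent $\beta$ and the induction disappears. Finally, you inherit from Proposition \ref{prop:lower} the unaddressed issues of existence and regularity of the constrained minimizer and the validity of its Euler--Lagrange first integral in the degenerate setting; the monotonicity of $\psi$ needed for your integration does at least come for free from $\psi''\ge0$ and $\psi'(h_i+r)=0$.
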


\begin{proof}
We prove our statement by induction on $j\geq1$ and we begin our proof by considering the case of only one transition ($N=1$). 
Let $h_1$ be the only point of discontinuity of $v$ and assume, without loss of generality, that $v=-1$ on $(a,h_1)$. 
Also, we choose $\delta_j$ small enough such that
\begin{equation*}
	(h_1-2j\delta_j,h_1+2j\delta_j)\subset(a,b).
\end{equation*}
Our goal is to show that for any $j\in\mathbb N$ there exist $x_j\in(h_1- 2j\delta_j,h_1)$ and $y_j\in(h_1, h_{1}+2j\delta_j)$ such that
\begin{equation}\label{x_k,y_k}
	w(x_j)\leq-1+C_j\varepsilon^\frac{k_j+1}{n}, \qquad w(y_j)\geq 1-C_j\varepsilon^\frac{k_j+1}{n},
\end{equation}
and
\begin{equation}\label{E_p(w)-xk,yk}
	\int_{x_j}^{y_j}\left[\frac\e2D(u)u_x^2+\frac{F(u)}{\e}\right]\,dx\geq\gamma-C_j\varepsilon^{k_{j+1}},
\end{equation}
where $\{k_j\}_{{}_{j\geq1}}$ is defined in \eqref{eq:exp_alg}.
We start with the base case $j=1$, and we show that hypotheses \eqref{|w-v|_L^1<delta_l} and \eqref{E_p(w)<Nc0+eps^l} imply the existence of two points 
$x_1{\in(h_1-2\delta_j,h_1)}$ and $y_1\in(h_1,h_1+2\delta_j)$ such that
\begin{equation}\label{x_1,y_1}
	w(x_1)\leq-1+C_j\varepsilon^\frac1{n}, \qquad w(y_1)\geq1-C_j\varepsilon^\frac1{n}.
\end{equation}
Here and throughout, $C_j$ represents a positive constant that is independent of $\varepsilon$,
whose value may change from line to line. 
From hypothesis \eqref{|w-v|_L^1<delta_l} we have
\begin{equation}\label{int su (gamma,b)}
	\int_{h_1}^b|w-1|\leq\delta_j,
\end{equation}
so that, denoting by $S^-:=\{y:w(y)\leq0\}$ and by $S^+:=\{y:w(y)>0\}$, 
 \eqref{int su (gamma,b)} yields
\begin{equation*}
\begin{aligned}
\textrm{meas}(S^-\cap(h_1,b))\leq\delta_j \qquad \mbox{and} \qquad
	\textrm{meas}(S^+\cap(h_1,h_1+2\delta_j))\geq\delta_j.
	\end{aligned}
\end{equation*}
Furthermore, from \eqref{E_p(w)<Nc0+eps^l}  with $j=1$, we obtain
\begin{equation*}
	\int_{S^+\cap(h_1,h_1+2\delta_j)}\frac{F(w)}\varepsilon\, dx\leq\gamma+1,
\end{equation*}
and therefore there exists $y_1\in S^+\cap(h_1,h_1+2\delta_j)$ such that
\begin{equation*}
	F(w(y_1))\leq C_j\varepsilon, \qquad \quad C_j=\frac{\gamma+1}{\delta_j}.
\end{equation*}
From the definition of $F$ in \eqref{eq:D-F}, it follows that $w(y_1)\geq 1-C_j\e^{\frac1{n}}$.
The existence of $x_1{\in S^-\cap(h_1-2\delta_j,h_1)}$ such that $w(x_1)\leq-1+C_j\varepsilon^\frac1{n}$ can be proved similarly.

Now, let us prove that \eqref{x_1,y_1} implies \eqref{E_p(w)-xk,yk} in the case $j=1$,
and as a trivial consequence we obtain the statement \eqref{E_p(w)>Nc0-eps^l} with $j=1$ and $N=1$.
Indeed, by using \eqref{eq:ineq} and \eqref{x_1,y_1} one deduces 
\begin{equation}\label{stima-l=1}
	\begin{aligned}
		E_\e[w]&\geq\int_{x_1}^{y_1}\left[\frac\e2D(w)w_x^2+\frac{F(w)}{\e}\right]\,dx
		\geq\int_{w(x_1)}^{w(y_1)} \sqrt{2D(s)F(s)}\,ds\\
		&\geq\gamma-\int_{1-C_j\varepsilon^\frac1{n}}^1 \sqrt{2D(s)F(s)}\,ds-\int_{-1}^{-1+C_j\varepsilon^\frac1{n}}\sqrt{2D(s)F(s)}\,ds\\
		&\geq\gamma-C_j\e^\alpha,
	\end{aligned}
\end{equation}
where $\alpha$ is defined in \eqref{eq:exp_alg} and we used the formulas for $D,F$ \eqref{eq:D-F} (actually, it is important only the behavior close to $\pm1$).
This concludes the proof in the case $j=1$ with one transition ($N=1$).

We now enter the core of the induction argument, proving that if \eqref{E_p(w)-xk,yk} holds true for for any $i\in\{1,\dots,j-1\}$, $j\geq2$, then \eqref{x_k,y_k} holds true.
By using \eqref{|w-v|_L^1<delta_l} we have
\begin{equation}\label{meas>delta_l-k}
	\textrm{meas}(S^+\cap(y_{j-1},y_{j-1}+2\delta_j))\geq\delta_j.
\end{equation}
Furthermore, by using  \eqref{E_p(w)<Nc0+eps^l} and \eqref{E_p(w)-xk,yk} in the case $j-1$, we deduce
\begin{equation*}
	\int_{y_{j-1}}^b\frac{F(w)}\varepsilon dx\leq C_j\varepsilon^{k_j},
\end{equation*}
implying
\begin{equation}\label{int F(w)<Ceps^k+1}
	\int_{S^+\cap(y_{j-1},y_{j-1}+2\delta_j)} F(w)\,dx\leq C_j\varepsilon^{k_j+1}.
\end{equation}
Finally, from  \eqref{meas>delta_l-k} and \eqref{int F(w)<Ceps^k+1} there exists $y_{j}\in S^+\cap(y_{j-1},y_{j-1}+2\delta_j)$ such that
\begin{equation*}
	F(w(y_{j}))\leq \frac{C_j}{\delta_j}\varepsilon^{k_j+1},
\end{equation*}
and, as a consequence, we have the existence of $y_{j}\in(y_{j-1},y_{j-1}+2\delta_j)$ as in \eqref{x_k,y_k}. 
The existence of $x_{j}\in(x_{j-1}-2\delta_j,x_{j-1})$ can be proved similarly. 

Reasoning as in \eqref{stima-l=1}, one can easily check that \eqref{x_k,y_k} implies
\begin{equation*}
	\int_{x_{j}}^{y_{j}}\left[\frac\e2D(w)w_x^2+\frac{F(w)}{\e}\right]\,dx\geq\gamma-C_j\varepsilon^{k_{j+1}},
\end{equation*}
and  the induction argument is completed, as well as the proof in case $N=1$.

The previous argument can be easily adapted to the case $N>1$. 
Let $v$ be as in \eqref{eq:vstruct}, and set $a=h_0, h_{N+1}=b$. 
We argue as in the case $N=1$ in each point of discontinuity $h_i$, by choosing
the constant $\delta_j$ so that 
$$
	{h_i}+2j\delta_j<h_{i+1}-2j\delta_j,\qquad \quad 0\leq i\leq N,
$$
and by assuming, without loss of generality, that $v=-1$ on $(a,h_1)$. 
Proceeding as in \eqref{x_1,y_1}, one can obtain the existence of $x^i_1\in(h_i-2\delta_j,h_i)$ and $y^i_1\in(h_i,h_i+2\delta_j)$ such that
\begin{align*}
	w(x^i_1)&\approx (-1)^i, &w(y^i_1)&\approx(-1)^{i+1},\\
	F(w(x^i_1))&\leq C\varepsilon, &F(w(y^i_1))&\leq C\varepsilon.
\end{align*}
On each interval $(x_1^i,y_1^i)$ we estimate as in \eqref{stima-l=1}, so that by summing  one obtains
$$
	\sum_{i=1}^N\int_{x_1^i}^{y_1^i}\left[\frac\e2D(w)w_x^2+\frac{F(w)}{\e}\right]\,dx\geq N\gamma-C\varepsilon^\alpha,
$$
that is \eqref{E_p(w)>Nc0-eps^l} with $j=1$. 
Arguing inductively as done in the case $N=1$, we obtain \eqref{E_p(w)>Nc0-eps^l} for the general case $j\geq2$.
\end{proof}
\begin{rem}\label{rem:alg}
The estimate \eqref{E_p(w)>Nc0-eps^l} with the sequence $\{k_j\}_{j\in\mathbb{N}}$ defined in \eqref{eq:exp_alg} holds for any $m>1$ and $n\geq2$ in \eqref{eq:D-F}.
However, it is very important to distinguish the cases $n\leq m+2$ and $n>m+2$.
Indeed, if $n\leq m+2$ the increasing sequence in \eqref{eq:exp_alg} is unbounded because $\alpha\geq1$ 
and we can replace the estimate \eqref{E_p(w)>Nc0-eps^l} with
\begin{equation*}
	E_\e[w]\geq N\gamma-C_j\varepsilon^{k}, \qquad \qquad k\in\mathbb{N},
\end{equation*}
provided that
\begin{equation*}
	E_\e[w]\leq N\gamma+\varepsilon^{k}, \qquad \qquad k\in\mathbb{N}.
\end{equation*}
On the other hand, the condition $n>m+2$ implies that $\alpha\in(0,1)$ and, consequently, 
the increasing sequence defined in \eqref{eq:exp_alg} is bounded and satisfies
\begin{equation}\label{eq:limitalpha}
	\lim_{j\to+\infty}k_j=\frac{\alpha}{1-\alpha}=\frac{n+m+2}{n-m-2}=1+\frac{2m+4}{n-m-2}=:\beta.
\end{equation}
\end{rem}

Reasoning as in the proof of Proposition \ref{prop:lower_deg}, we obtain the following result in the case \eqref{eq:D-F-onezero}.
\begin{prop}\label{prop:lower_deg2}
Consider the functional \eqref{eq:energy} with $D, F$ given by \eqref{eq:D-F-onezero}.
Let $v:(a,b)\rightarrow\{-1,+1\}$ a piecewise constant function with exactly $N$ discontinuities (as in \eqref{eq:vstruct}) and define the sequence
\begin{equation}\label{eq:exp_alg2}
	k_1=0,\qquad \qquad k_2:=\frac{n+1}{2n}, \qquad\qquad  k_{j+1}:=\frac{n+2}{2n}(k_{j}+1), \qquad j\geq2.
\end{equation}
Then, for any $j\in \mathbb N$ there exist constants $\delta_j>0$ and $C_j>0$ such that if $w\in H^1$ satisfies
\begin{equation*}
	\|w-v\|_{L^1}\leq\delta_j,
\end{equation*}
and
\begin{equation*}
	E_\e[w]\leq N\gamma+\varepsilon^{k_{j}},
\end{equation*}
with $\varepsilon$ sufficiently small, then
\begin{equation*}
	E_\e[w]\geq N\gamma-C_j\varepsilon^{k_{j+1}},
\end{equation*}
where $\gamma$ is defined in \eqref{eq:gamma}.
\end{prop}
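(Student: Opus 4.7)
The plan is to mirror the inductive scheme of Proposition \ref{prop:lower_deg} almost line by line, the only structural change being that now $D$ is degenerate solely at $u=+1$ while $F$ still has the same degeneracy at both endpoints. As in the previous proof, I would reduce first to the single-transition case $N=1$ with $v\equiv -1$ on $(a,h_1)$, $v\equiv +1$ on $(h_1,b)$, and then sum over discontinuities at the end.

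For the base case $j=1$, the hypothesis $\|w-v\|_{L^1}\leq\delta_1$ forces the set $\{w>0\}\cap(h_1,h_1+2\delta_1)$ and the set $\{w\leq 0\}\cap(h_1-2\delta_1,h_1)$ to each have measure at least $\delta_1$. The bound $E_\e[w]\leq \gamma+1$ combined with the mean-value argument then yields points $y_1\in(h_1,h_1+2\delta_1)$ and $x_1\in(h_1-2\delta_1,h_1)$ at which $F(w(y_1))\leq C\e$ and $F(w(x_1))\leq C\e$. Using $F(s)\sim C(1-s)^n$ as $s\to 1^-$ and $F(s)\sim C(1+s)^n$ as $s\to -1^+$, these translate into $1-w(y_1)\leq C\e^{1/n}$ and $1+w(x_1)\leq C\e^{1/n}$. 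Applying \eqref{eq:ineq} on $(x_1,y_1)$,
\begin{equation*}
  E_\e[w]\;\geq\;\int_{w(x_1)}^{w(y_1)}\!\sqrt{2D(s)F(s)}\,ds\;\geq\;\gamma-R_-(\e)-R_+(\e),
\end{equation*}
with $R_-(\e)=\int_{-1}^{-1+C\e^{1/n}}\!\sqrt{2DF}\,ds$ and $R_+(\e)=\int_{1-C\e^{1/n}}^{1}\!\sqrt{2DF}\,ds$. The asymmetry of $D$ is exactly what makes the two tails different: near $-1$, $D$ is bounded away from zero and $\sqrt{2DF}\sim C(1+s)^{n/2}$, so $R_-(\e)\leq C\e^{(n+2)/(2n)}$; near $+1$, $D(s)\sim(1-s)^m$ and $\sqrt{2DF}\sim C(1-s)^{(m+n)/2}$, giving $R_+(\e)\leq C\e^{(m+n+2)/(2n)}$. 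Since $m>1$, the nondegenerate tail $R_-$ dominates and sets the base exponent $k_2$ in \eqref{eq:exp_alg2}.

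For the inductive step ($j\geq 2$), I would argue exactly as in Proposition \ref{prop:lower_deg}: the stronger hypothesis $E_\e[w]\leq N\gamma+\e^{k_j}$ together with the previously obtained lower bound on $\int_{x_{j-1}}^{y_{j-1}}\!(\e D w_x^2/2+F(w)/\e)\,dx$ forces $\int_{y_{j-1}}^{y_{j-1}+2\delta_j}F(w)/\e\,dx\leq C\e^{k_j}$, so the mean-value argument now produces a new point $y_j\in(y_{j-1},y_{j-1}+2\delta_j)$ at which $F(w(y_j))\leq C\e^{k_j+1}$, hence $1-w(y_j)\leq C\e^{(k_j+1)/n}$, and analogously $x_j$ with $1+w(x_j)\leq C\e^{(k_j+1)/n}$. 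Substituting these sharper estimates into the two tail integrals,
\begin{equation*}
  R_-(\e)\leq C\,\bigl(\e^{(k_j+1)/n}\bigr)^{(n+2)/2}=C\,\e^{\frac{n+2}{2n}(k_j+1)},\qquad R_+(\e)\leq C\,\e^{\frac{m+n+2}{2n}(k_j+1)},
\end{equation*}
so $R_-$ reproduces precisely the recurrence $k_{j+1}=\frac{n+2}{2n}(k_j+1)$ in \eqref{eq:exp_alg2}, while $R_+$ is absorbed since its exponent is strictly larger. Summing the per-transition estimates for $N>1$, with $\delta_j$ shrunk so that the $2j\delta_j$-neighborhoods of the $h_i$'s remain pairwise disjoint, delivers \eqref{E_p(w)>Nc0-eps^l}.

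The main difficulty, as I see it, is purely bookkeeping: one must verify that the asymmetry between the degenerate endpoint $+1$ and the nondegenerate endpoint $-1$ does not spoil the induction, i.e.\ that the inductive improvement produced on the \emph{nondegenerate} side (which drives the recurrence) is not destroyed by the \emph{degenerate} side, and that the alternation of the roles of $+1$ and $-1$ across successive transition points $h_i$ (for $N>1$) leaves the argument invariant. Both are routine once one observes that the degenerate tail always gives a smaller contribution because $m>1$, and that each transition can be treated in its own disjoint neighborhood by an identical analysis to the $N=1$ case.
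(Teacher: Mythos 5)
Your proof is correct and follows essentially the same route as the paper's: the paper simply declares the argument identical to that of Proposition \ref{prop:lower_deg} except for the modified version of the key estimate \eqref{stima-l=1}, in which the nondegenerate endpoint $-1$ contributes the dominant tail $C\e^{(n+2)/(2n)}$ and the degenerate endpoint $+1$ contributes the smaller $C\e^{(m+n+2)/(2n)}$, exactly as you compute. Note only that your base case yields the exponent $\tfrac{n+2}{2n}$, which is also what the paper's own proof (and its remark identifying \eqref{eq:exp_alg2} with \eqref{eq:exp_alg} at $m=0$) gives, so the value $k_2=\tfrac{n+1}{2n}$ in the statement appears to be a typo and is in any case implied by your stronger bound.
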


\begin{proof}
The proof is exactly the same of the one of Proposition \ref{prop:lower_deg} (we left the same notation), 
with the only difference that the crucial estimate \eqref{stima-l=1} when $D, F$ are given by \eqref{eq:D-F-onezero} becomes
\begin{equation*}
	\begin{aligned}
		E_\e[w]&\geq\int_{x_1}^{y_1}\left[\frac\e2D(w)w_x^2+\frac{F(w)}{\e}\right]\,dx
		\geq\int_{w(x_1)}^{w(y_1)} \sqrt{2D(s)F(s)}\,ds\\
		&\geq\gamma-\int_{1-C_j\varepsilon^\frac1{n}}^1 \sqrt{2D(s)F(s)}\,ds-\int_{-1}^{-1+C_j\varepsilon^\frac1{n}}\sqrt{2D(s)F(s)}\,ds\\
		&\geq\gamma- C_j(\e^\alpha+\e^{\frac{n+2}{2n}})\geq\gamma-C_j\e^{\frac{n+2}{2n}},	
	\end{aligned}
\end{equation*}
where $\alpha$ is defined in \eqref{eq:exp_alg} and we used $\alpha\geq\frac{n+2}{2n}$, $\e\ll1$.
\end{proof}

Observe that the sequence $\{k_j\}_{j\in\mathbb N}$ in Proposition \ref{prop:lower_deg2} is exactly \eqref{eq:exp_alg} with $m=0$.
Hence, the same considerations of Remark \ref{rem:alg} hold true also in the case \eqref{eq:D-F-onezero}, 
and in the case $n>2$, the sequence \eqref{eq:exp_alg2} is increasing, bounded and 
\begin{equation*}\label{}
	\lim_{j\to+\infty}k_j=\frac{n+2}{n-2}=1+\frac{4}{n-2},
\end{equation*}
that is \eqref{eq:limitalpha} with $m=0$.

We conclude this section by showing an example of sequence $u^\e$ converging in $L^1(a,b)$ to a piecewise constant function $v$ as in \eqref{eq:vstruct} 
and for what the equality in the limit \eqref{eq:liminf-E} holds true.
\begin{prop}\label{prop:example}
Let $D$ and $F$ be as in \eqref{eq:D-F} or \eqref{eq:D-F-onezero} and fix $v$ as in \eqref{eq:vstruct}. 
There exists a function $u^\e$ satisfying 
\begin{equation}\label{eq:E=Ngamma}
		\lim_{\e\to0^+}\|u^\e-v\|_{{}_{L^1}}=0, \qquad \mbox{ and } \qquad \lim_{\e\to0^+}E_\e[u^\e]=N\gamma,
\end{equation} 
where $\gamma$ is defined in \eqref{eq:gamma}.
\end{prop}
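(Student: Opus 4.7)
The statement is a standard recovery-sequence construction in the spirit of $\Gamma$-convergence, so my plan is to build $u^\e$ explicitly from the standing wave profiles given by Propositions \ref{prop:standing} and \ref{prop:standing-onezero}. Without loss of generality, assume $v=-1$ on $(a,h_1)$. Fix $r>0$ satisfying \eqref{eq:r}, and define $u^\e$ as follows: on each interval $(h_i-r,h_i+r)$ let $u^\e$ coincide with the appropriate translated (and, for every other $i$, reflected) copy of $\Phi$ centered at $h_i$; on the plateau intervals between consecutive transitions set $u^\e=\pm 1$ in accordance with $v$. In the case $n<m+2$, the standing wave touches $\pm 1$ at a finite distance $\omega_\e=\mathcal O(\e)$, so the resulting $u^\e$ coincides with the compacton $\varphi_N^\e$ of Propositions \ref{prop:ex-compactons} and \ref{prop:stat-onezero} and already lies in $H^1(a,b)$. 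In the case $n\geq m+2$, the profile $\Phi$ approaches $\pm 1$ only at infinity, so I would insert a short piecewise linear buffer of length $\delta>0$ on each side of each transition, smoothly connecting the tail value $\Phi(\pm r)$ (close to $\pm 1$) with the plateau value $\pm 1$.

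The energy estimate rests on the first integral $\tfrac{\e^2}{2}D(\Phi)(\Phi')^2=F(\Phi)$ satisfied by the standing wave, which makes the two terms of the density of $E_\e$ coincide on each copy of $\Phi$. The change of variables $s=\Phi(x)$ then yields
\begin{equation*}
\int_{h_i-r}^{h_i+r}\left[\frac{\e}{2}D(\Phi)(\Phi')^2+\frac{F(\Phi)}{\e}\right]dx=\left|\int_{\Phi(h_i-r)}^{\Phi(h_i+r)}\sqrt{2D(s)F(s)}\,ds\right|,
\end{equation*}
which converges to $\gamma$ as $\e\to 0^+$ by \eqref{eq:gamma}, thanks to the decay estimates \eqref{eq:exp-decay} or \eqref{eq:alg-decay} (which force $\Phi(h_i\pm r)\to\pm 1$). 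Summing over $i=1,\dots,N$ produces $N\gamma$ in the limit. The plateau regions contribute zero, and each buffer contributes at most a bound of order $C\e\delta^{-1}(1-|\Phi(r)|)^{m+2}+C\delta\e^{-1}(1-|\Phi(r)|)^n$, obtained from $D(u^\e)\leq C(1-|u^\e|)^m$ and $F(u^\e)\leq C(1-|u^\e|)^n$ combined with $u^\e\in[\Phi(r),1]$ on the buffer; both terms vanish in the limit for any fixed $\delta$ because $1-|\Phi(r)|$ decays exponentially when $n=m+2$ and with a sufficiently large algebraic rate when $n>m+2$ to dominate the $\e^{-1}$ prefactor.

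Finally, the $L^1$-convergence $\|u^\e-v\|_{{}_{L^1}}\to 0$ is immediate: $u^\e$ coincides with $v$ outside the transition and buffer intervals, whose total measure is $\mathcal O(\e+\delta)$, and $|u^\e-v|\leq 2$ there. The main technical obstacle I expect is the algebraic regime $n>m+2$, in which $1-|\Phi(\pm r)|$ decays only polynomially in $\e$: one must verify, from the ODE $\e\Phi'=\tfrac{1}{\sqrt n}(1-\Phi^2)^{(n-m)/2}$ near $\Phi=\pm 1$, that the decay exponent (namely $2/(n-m-2)$ in $1-|\Phi|$) is large enough to make the potential part of the buffer estimate vanish after the $\e^{-1}$ prefactor is taken into account; the exponent count confirms this in all the relevant regimes, so the construction produces the desired recovery sequence.
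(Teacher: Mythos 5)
Your construction is correct and rests on the same two pillars as the paper's proof: the standing waves of Propositions \ref{prop:standing} and \ref{prop:standing-onezero} as building blocks, and the first integral $\tfrac{\e^2}{2}D(\Phi)(\Phi')^2=F(\Phi)$, which by the change of variables $s=\Phi(x)$ turns the energy of each transition into $\bigl|\int\sqrt{2D(s)F(s)}\,ds\bigr|$. The difference is in the gluing. The paper simply takes $\varphi_N^\e$ from \eqref{eq:compactons-}: translated and reflected copies of $\Phi$ joined at the midpoints $m_i$ of consecutive transition points, with no modification. When $n\ge m+2$ the resulting profile is then merely close to $\pm1$ on the plateaus rather than equal to it, but the change of variables yields the exact bound $E_\e[\varphi_N^\e]\le N\gamma$ with nothing left over to estimate, and the limit follows by sandwiching against the lower bounds of Propositions \ref{prop:lower}, \ref{prop:lower2} and \ref{prop:lower_deg} (or directly, since $\Phi(l_i)\to\mp1$). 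You instead truncate $\Phi$ at $h_i\pm r$ and insert linear buffers reaching $\pm1$ exactly, which obliges you to check that the buffer energy $C\e\delta^{-1}(1-|\Phi(r)|)^{m+2}+C\delta\e^{-1}(1-|\Phi(r)|)^{n}$ vanishes; your exponent count is right ($1-|\Phi(r)|\sim\e^{2/(n-m-2)}$ when $n>m+2$, and $2n/(n-m-2)-1=(n+m+2)/(n-m-2)>0$), but this is extra work that the midpoint gluing avoids. Two minor imprecisions, neither fatal: in the single-degenerate case \eqref{eq:D-F-onezero} with $n<m+2$ the wave touches only $+1$, so near $-1$ you still need a buffer (or the midpoint gluing), and there the gradient bound should use $D(u)\approx 2^m$ rather than $(1-|u|)^m$; both corrected terms still vanish.
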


\begin{proof}
We have already introduced a function satisfying \eqref{eq:E=Ngamma} in Section \ref{sec:compactons}:
the proof consists in checking that the function $\varphi_N^\e$ defined in \eqref{eq:compactons-}
with standing wave $\Phi$ satisfying \eqref{eq:ST} with $\Phi(0)=0$, whose existence has been proved in Proposition \ref{prop:standing} in the case of $D,F$ given by \eqref{eq:D-F}, and in Proposition \ref{prop:standing-onezero} when $D, F$ are given by \eqref{eq:D-F-onezero}, satisfies \eqref{eq:E=Ngamma} for any $N\in\mathbb N$.

The convergence in $L^1$ is a consequence of the behavior of the standing wave $\Phi$ as $\e\to0^+$.
Let us consider the case \eqref{eq:D-F}, being the other case very similar.
Indeed, in the case $n<m+2$ we have $\Phi(\pm\omega_\e)=\pm1$ and $\omega_\e$ defined in \eqref{eq:omega} goes to $0$ as $\e\to0^+$;
on the other hand, if $n\geq m+2$ we can write $\Phi(x)=\Psi(x/\e)$, where $\displaystyle\lim_{s\to\pm\infty}\Psi(s)=\pm1$, see the estimates \eqref{eq:exp-decay}-\eqref{eq:alg-decay}.

Now, let us check that $\varphi_N^\e$ satisfies the second limit in \eqref{eq:E=Ngamma}. 
First, consider the case $n\geq m+2$: by definitions of energy \eqref{eq:energy}, function $\varphi_N^\e$ \eqref{eq:compactons-},
and the fact that $\Phi$ satisfies \eqref{eq:Cauchy-ST}, we get
\begin{align*}
	E_\e[\varphi_N^\e]&=\sum_{i=1}^{N}\int_{m_i}^{m_{i+1}}\left[\frac{\e}{2}D(\Phi)(\Phi')^2+\frac{F(\Phi)}{\e}\right]\,dx
	=\sum_{i=1}^{N}(-1)^i\int_{l_j}^{l_{j+1}}\left[\sqrt{2D(\Phi)F(\Phi)}\Phi'\right]\,dx\\
	&=\sum_{i=1}^{N}(-1)^i\int_{\Phi(l_i)}^{\Phi(l_{i+1})}\sqrt{2D(s)F(s)}\,ds,
\end{align*}
where $l_i:=(-1)^i(m_i-h_i)$, for $i=1,\dots,N$ and $l_{N+1}=(-1)^{N}(b-h_N)$.
Since $|\Phi|<1$ if $n\geq m+2$ both in the case \eqref{eq:D-F} and \eqref{eq:D-F-onezero},
 we have $E_\e[\varphi_N^\e]<N\gamma$.
Using such an upper bound and the lower bounds obtained in Propositions \ref{prop:lower}-\ref{prop:lower2}-\ref{prop:lower_deg},
we obtain \eqref{eq:E=Ngamma} in the case $n\geq m+2$.
Proceeding in the same way and taking advantage of the fact that $\varphi_N^\e(h_i\pm\omega_\e)=\pm(-1)^{i+1}$, 
we end up with $E_\e[\varphi_N^\e]=N\gamma$ in the case $D,F$ given by \eqref{eq:D-F} with $n<m+2$.
Similarly, we obtain $E_\e[\varphi_N^\e]<N\gamma$ in the case \eqref{eq:D-F-onezero} with $n<m+2$,
and we can conclude that $\varphi_N^\e$ satisfies \eqref{eq:E=Ngamma} in both the cases \eqref{eq:D-F}-\eqref{eq:D-F-onezero} for any $m>1$ and $n\geq2$.
\end{proof}

\section{Slow motion of the solutions}\label{sec:slow}
In this final section, we present the main results of the paper, regarding the slow evolution of solutions to \eqref{eq:model}-\eqref{eq:Neu},
when $D$ and $F$ are given by \eqref{eq:D-F} or \eqref{eq:D-F-onezero}.
To be more precise, when $D$ and $F$ are given by \eqref{eq:D-F} we focus the attention on the case $n\geq m+2$.
As we saw in Section \ref{sec:stationary}, in this framework the only stationary solutions to \eqref{eq:model}-\eqref{eq:Neu} are constants 
($\pm1$ stable, $0$ unstable) or periodic;
however, we shall prove persistence of metastable patterns with a non-stationary $N$-\emph{transition layer structure} 
for (at least) an exponentially long time, i.e. for a time $T_\e\geq\nu\exp(c/\e)$ as $\e\to0^+$ when $n=m+2$ (taking advantage of Proposition \ref{prop:lower}),
and for an algebraically long time $T_\e\geq\nu\e^{-\beta}$ as $\e\to0^+$ when $n>m+2$ (taking advantage of Proposition \ref{prop:lower_deg}).
Moreover, we consider \eqref{eq:D-F-onezero} with $n=2$ and $m\in(1,2]$ or $n>2$, $m>1$:
in the first case, thanks to Proposition \ref{prop:lower2}, we can prove persistence of metastable patterns for an exponentially long time, 
while in the second one, thanks to Proposition \ref{prop:lower_deg2}, we can prove that some unstable structures persist for an algebraically long time. 

In order to obtain all the aforementioned results, we used the energy approach introduced by Bronsard and Kohn \cite{BrKo90} 
to study slow motion of solutions to the classical Allen--Cahn equation, which is formally obtained by choosing $m=0$ and $n=2$ in \eqref{eq:D-F}.
In particular, in \cite{BrKo90} the authors prove that some solutions maintain the same $N$-transition layer structure of the initial datum
(at least) for a time of $T_\e\geq\nu\e^{-k}$ as $\e\to0^+$, for any $k\in\mathbb N$.
The energy approach of \cite{BrKo90} is quite elementary yet powerful and it can be adapted for many different evolution PDEs:
without claiming to be complete, we list some references where the energy approach \cite{BrKo90} has been used to study slow motion of solutions.
In \cite{Grnt95}, Grant improves the energy approach of \cite{BrKo90} to obtain exponentially slow motion for the Cahn--Morral system;
then, this improved energy approach has been applied to both hyperbolic PDEs like in \cite{FLM19} and in parabolic PDEs with nonlinear diffusion \cite{FHLP20,FPS21}.
After establishing energy estimates like \eqref{eq:energyestimates} and the crucial lower bound \eqref{eq:lower-theta} 
obtained in the Propositions \ref{prop:lower}, \ref{prop:lower2}, \ref{prop:lower_deg}, \ref{prop:lower_deg2}, the energy approach is standard
and the procedure to conclude the proof of the slow motion of solutions is very similar to the previous works, 
see among others the aforementioned papers \cite{BrKo90,FHLP20,FLM19,FPS21,Grnt95}.
Then, the main novelty in the proof of our main results are the energy estimates \eqref{eq:energyestimates} and the variational results contained in Section \ref{sec:lower}.
For the sake of completeness, we show how to proceed and obtain slow motion results for solutions to \eqref{eq:model}-\eqref{eq:Neu}
satisfying the energy estimates \eqref{eq:energyestimates} once one has a lower bound on the energy of the type \eqref{eq:lower-theta}.
With this aim, we rewrite all the Propositions \ref{prop:lower}, \ref{prop:lower2}, \ref{prop:lower_deg}, \ref{prop:lower_deg2} in a single one as follows.

\begin{prop}\label{prop:alltogether}
Consider the functional \eqref{eq:energy} with $D, F$ given by \eqref{eq:D-F} or \eqref{eq:D-F-onezero}.
Fix $v$ as in \eqref{eq:vstruct} and $r>0$ such that \eqref{eq:r} holds true.
Then, there exist $\theta:(0,\infty)\to(0,\infty)$ converging to $0$ as $\e\to0^+$, $\e_0,C,\bar\delta>0$ such that if $u\in H^1(a,b)$ satisfies 
\begin{equation}\label{eq:u-v-bar}
	\|u-v\|_{{}_{L^1}}\leq\bar\delta,
\end{equation}
and
\begin{equation}\label{eq:add-ass}
	E_\e[u]\leq N\gamma+\theta(\e), \qquad \qquad \mbox{ for } \, \e\in(0,\e_0),
\end{equation}
then the energy functional \eqref{eq:energy} satisfies
\begin{equation*}
	E_\e[u]\geq N\gamma-C\theta(\e), \qquad \qquad \mbox{ for any } \,\, \e\in(0,\e_0),
\end{equation*}
where the positive constant $\gamma$ is defined in \eqref{eq:gamma}.
\end{prop}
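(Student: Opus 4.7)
The plan is to prove Proposition \ref{prop:alltogether} by a case analysis on the degeneracy exponents $n,m$ and the choice of diffusion \eqref{eq:D-F} versus \eqref{eq:D-F-onezero}, selecting the function $\theta(\e)$ in each regime so that exactly one of Propositions \ref{prop:lower}, \ref{prop:lower2}, \ref{prop:lower_deg}, \ref{prop:lower_deg2} applies off the shelf. The unified statement is a repackaging, so the strategy is to verify in each subcase that the $\theta$ supplied on the hypothesis side also controls the remainder on the conclusion side.

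First I would split into the four regimes already treated individually: (i) \eqref{eq:D-F} with $n=m+2$; (ii) \eqref{eq:D-F-onezero} with $n=2$, $m\in(1,2]$; (iii) \eqref{eq:D-F} with $n>m+2$; and (iv) \eqref{eq:D-F-onezero} with $n>2$. In the exponentially small regimes (i) and (ii), I fix any admissible constant $A$ from the hypotheses of Propositions \ref{prop:lower} and \ref{prop:lower2}, and set $\theta(\e):=\exp(-A\sqrt{n}/\e)$ in case (i) (or $\exp(-A/\e)$, whichever the relevant subcase of Proposition \ref{prop:lower} produces) and $\theta(\e):=\exp(-A/\e)$ in case (ii). Taking $\bar\delta:=\delta$ as supplied by those propositions, the $L^1$-closeness hypothesis \eqref{eq:u-v-bar} alone suffices to invoke the relevant lower bound \eqref{eq:lower}, \eqref{eq:lower2}, or \eqref{eq:1-deg-cm}, and the desired estimate $E_\e[u]\geq N\gamma-C\theta(\e)$ follows immediately; the extra energy bound \eqref{eq:add-ass} is not used here, but is vacuously compatible.

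In the algebraic regimes (iii) and (iv), I fix an index $j\in\mathbb{N}$ depending only on $n,m$ (in (iii), any $j$ works, with rate approaching $\beta$ from \eqref{eq:limitalpha}; in (iv), similarly, with limit $(n+2)/(n-2)$) and set $\theta(\e):=\e^{k_j}$, where $k_j$ is from \eqref{eq:exp_alg} in case (iii) or \eqref{eq:exp_alg2} in case (iv). With this choice the hypothesis \eqref{eq:add-ass} coincides verbatim with the running upper bound of Proposition \ref{prop:lower_deg} or \ref{prop:lower_deg2}, so those propositions deliver $E_\e[u]\geq N\gamma-C_j\e^{k_{j+1}}$, with $\bar\delta:=\delta_j$. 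Because the sequences \eqref{eq:exp_alg} and \eqref{eq:exp_alg2} are strictly increasing by construction (the recursion $k_{j+1}=\alpha(k_j+1)$ gives $k_{j+1}-k_j=\alpha-(1-\alpha)k_j>0$ whenever $k_j<\alpha/(1-\alpha)$, and analogously in the unbounded case $\alpha\geq1$), we have $\e^{k_{j+1}}\leq\e^{k_j}=\theta(\e)$ for $\e\in(0,1)$, and the conclusion $E_\e[u]\geq N\gamma-C\theta(\e)$ follows with $C=C_j$.

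There is no serious obstacle since the heavy lifting is done in Section \ref{sec:lower}; the only point requiring care is that in the algebraic regimes the constants $C_j$ and the closeness radius $\delta_j$ both depend on the chosen index $j$, so one must fix $j$ once and for all as a function of $n,m$ before extracting the single pair $(C,\bar\delta)$ and the single $\e_0$ that the unified statement demands. After gluing the four cases, the function $\theta$, the threshold $\e_0$, the constant $C$, and the radius $\bar\delta$ are all determined.
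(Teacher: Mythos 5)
Your proposal is correct and follows the paper's own route exactly: the paper gives no separate proof of Proposition \ref{prop:alltogether}, presenting it as a repackaging of Propositions \ref{prop:lower}, \ref{prop:lower2}, \ref{prop:lower_deg} and \ref{prop:lower_deg2} with $\theta$ chosen case by case (the list \textbf{(E1)}--\textbf{(E5)}), which is precisely your case analysis, including the observation that \eqref{eq:add-ass} is only needed in the algebraic regimes and that $j$ must be fixed before extracting $(C,\bar\delta,\e_0)$. The one harmless deviation is that in the algebraic regimes the paper sets $\theta(\e)=\e^{k_{j+1}}$ rather than your $\e^{k_j}$; both choices satisfy the statement as written, but the paper's sharper choice is the one that feeds the longer time scale $\nu\e^{-k_{j+1}}$ into Theorems \ref{thm:main}--\ref{thm:interface}.
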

Notice that the additional assumption \eqref{eq:add-ass} is needed only in Propositions \ref{prop:lower_deg}, \ref{prop:lower_deg2},
but, as we will see, in our applications \eqref{eq:add-ass} is always satisfied, 
so we include it in order to treat contemporarily all the results of Propositions \ref{prop:lower}, \ref{prop:lower2}, \ref{prop:lower_deg}, \ref{prop:lower_deg2}.
We remember that the explicit expression of the function $\theta$ in all the cases we want to treat.
\begin{description}
\item[(E1)] If $n=m+2$ and $m>2$ in \eqref{eq:D-F}, from \eqref{eq:lower} it follows that
$$\theta(\e)=\exp\left\{-A\sqrt{n}/\varepsilon\right\}, \qquad A\in(0,4r).$$ 
\item[(E2)] If $n=m+2$ and $m\in(1,2]$ in \eqref{eq:D-F}, \eqref{eq:lower2} implies
$$\theta(\e)=\exp\left\{-A/\varepsilon\right\}, \qquad A\in(0,2r).$$
\item[(E3)] If $n=2$ and $m\in(1,2]$ in \eqref{eq:D-F-onezero}, \eqref{eq:1-deg-cm} gives
$$\theta(\e)=\exp\left\{-A/\varepsilon\right\}, \qquad A\in(0,2^\frac{2-m}2r).$$
\item[(E4)] If $n>m+2$ in \eqref{eq:D-F}, \eqref{E_p(w)>Nc0-eps^l} yields 
$$\theta(\e)=\e^{k_{j+1}}, \qquad  \mbox{ with } \quad \{k_j\}_{j\in\mathbb N}\, \mbox{ defined in \eqref{eq:exp_alg}}.$$
\item[(E5)] If $n>2$ in \eqref{eq:D-F-onezero}, then Proposition \ref{prop:lower_deg2} and Remark \ref{rem:alg} imply
$$\theta(\e)=\e^{k_{j+1}}, \qquad  \mbox{ with } \quad \{k_j\}_{j\in\mathbb N}\, \mbox{ defined in \eqref{eq:exp_alg2}}.$$
\end{description}

Thanks to Proposition \ref{prop:alltogether} and the energy estimates \eqref{eq:energyestimates}, we shall prove that 
solutions to \eqref{eq:model}-\eqref{eq:Neu} starting with initial data which have a $N$-transition layer structure,
maintain such a structure for a time $T_\e\geq\nu\theta(\e)^{-1}$, for any $\nu>0$.
Since $\theta$ is given by \textbf{(E1)}-\textbf{(E5)}, we have that $T_\e\to\infty$ as $\e\to0^+$ and,
precisely, in \textbf{(E1)}-\textbf{(E2)}-\textbf{(E3)} $T_\e$ is (at least) exponentially large for $\e\ll1$ while
it is (at least) algebraically large in \textbf{(E4)}-\textbf{(E5)}.

Before stating and proving our main result, let us give the definition of a function with a $N$-transition layer structure,
which is a family of functions $\{u^\e\}_{\e>0}$ converging in $L^1$ to some piecewise constant function $v$ as in \eqref{eq:vstruct} as $\e\to0^+$,
and such that the energy $E_\e[u^\e]$ exceeds of a small quantity $\theta$ the minimum energy to have $N$ transition between $-1$ and $+1$. 

\begin{defn}\label{def:TLS}
Let $D$, $F$ as in \eqref{eq:D-F} or \eqref{eq:D-F-onezero}, with corresponding $\theta$ as in \textbf{(E1)}-\textbf{(E5)}
and let $v$ be a piecewise constant function like in \eqref{eq:vstruct}.
We say that a family of functions $\{u^\e\}\in H^1(a,b)$ has an \emph{$N$-transition layer structure} if 
\begin{equation}\label{eq:ass-u0}
	\lim_{\varepsilon\rightarrow 0} \|u^\varepsilon-v\|_{{}_{L^1}}=0,
\end{equation}
and there exists $C>0$ such that
\begin{equation}\label{eq:energy-ini}
	E_\varepsilon[u^\varepsilon]\leq N\gamma+C\theta(\e),
\end{equation}
for any $\varepsilon\ll1$, where the energy $E_\e$ is defined \eqref{eq:energy} and the constant $\gamma$ is given by \eqref{eq:gamma}.
\end{defn}

As we have already mentioned, the main goal of this section is to show that if the initial datum $u_0^\e$ has an $N$-\emph{transition layer structure}, 
then the solution to the IBVP \eqref{eq:model}-\eqref{eq:Neu}-\eqref{eq:initial} maintains such a structure for a time $T_\e\geq\nu\theta(\e)^{-1}$ as $\e\to0^+$, for any $\nu>0$.
We underline that the condition \eqref{eq:ass-u0} fixes the number of transitions between $-1$ and $+1$ and their relative positions as $\e\to0^+$,
while the condition \eqref{eq:energy-ini} requires that $u^\e$ makes these transitions in an ``energetical efficient" way.
Observe that, condition \eqref{eq:energy-ini} is automatically satisfied for all the time by \eqref{eq:energyestimates}.
Our main result ensures that condition \eqref{eq:ass-u0} is satisfied for a time $T_\e\geq\nu\theta(\e)^{-1}$ as $\e\to0^+$.

\begin{thm}\label{thm:main}
Assume that $D$, $F$ are given by \eqref{eq:D-F} or \eqref{eq:D-F-onezero}.
Let $v,r$ be as in \eqref{eq:vstruct}-\eqref{eq:r} and $\theta$ as in \emph{\textbf{(E1)}-\textbf{(E5)}}.
If $u^\varepsilon$ is the solution of \eqref{eq:model}-\eqref{eq:Neu}-\eqref{eq:initial} 
with initial datum $u_0^{\varepsilon}$ satisfying \eqref{eq:ass-u0} and \eqref{eq:energy-ini}, then, 
\begin{equation}\label{eq:limit}
	\sup_{0\leq t\leq \nu\theta(\varepsilon)^{-1}}\|u^\varepsilon(\cdot,t)-v\|_{{}_{L^1}}\xrightarrow[\varepsilon\rightarrow0]{}0,
\end{equation}
for any $\nu>0$.
\end{thm}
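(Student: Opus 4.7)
The plan is to follow the energy method of Bronsard and Kohn, combining the energy identity from Lemma \ref{lem:energy} with the variational lower bound encoded in Proposition \ref{prop:alltogether}. Fix $\nu > 0$ and introduce the stopping time
\[
\tau_\e := \inf\left\{ t \geq 0 : \|u^\e(\cdot, t) - v\|_{{}_{L^1}} \geq \bar{\delta}\right\},
\]
where $\bar{\delta}$ is the radius from Proposition \ref{prop:alltogether}. By \eqref{eq:ass-u0}, $\tau_\e > 0$ for $\e$ small. The key claim is that $\tau_\e > \nu \theta(\e)^{-1}$ for $\e$ small enough, from which \eqref{eq:limit} follows immediately.

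To prove the claim, first observe that for $t \in [0, \tau_\e)$ we have $\|u^\e(\cdot,t) - v\|_{{}_{L^1}} \leq \bar{\delta}$ by definition of $\tau_\e$, and since the energy identity \eqref{eq:energyestimates} forces $t \mapsto E_\e[u^\e](t)$ to be non-increasing, the upper bound \eqref{eq:energy-ini} propagates in time, so $E_\e[u^\e](t) \leq N\gamma + C\theta(\e)$ as well. Both hypotheses \eqref{eq:u-v-bar} and \eqref{eq:add-ass} of Proposition \ref{prop:alltogether} are therefore fulfilled at every $t \in [0,\tau_\e)$, giving the matching lower bound $E_\e[u^\e](t) \geq N\gamma - C\theta(\e)$. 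Combined with \eqref{eq:energy-ini} and \eqref{eq:energyestimates} this yields
\[
\iint_{Q_t} (u^\e_s)^2 \, dx\, ds \;=\; \e \bigl(E_\e[u^\e](0) - E_\e[u^\e](t)\bigr) \;\leq\; C\,\e\, \theta(\e),
\]
for every $t \in [0, \tau_\e)$. Two applications of Cauchy-Schwarz (first in $s$, then in $x$) then give
\[
\|u^\e(\cdot, t) - u_0^\e\|_{{}_{L^1}} \;\leq\; \bigl[(b-a)\,t\bigr]^{1/2} \left(\iint_{Q_t} (u^\e_s)^2 \, dx\, ds\right)^{1/2} \;\leq\; C'\bigl(\e\, t\, \theta(\e)\bigr)^{1/2},
\]
and the triangle inequality together with \eqref{eq:ass-u0} yields, for every $t \leq \min\{\tau_\e,\, \nu\theta(\e)^{-1}\}$,
\[
\|u^\e(\cdot, t) - v\|_{{}_{L^1}} \;\leq\; \|u_0^\e - v\|_{{}_{L^1}} + C'(\e\, \nu)^{1/2},
\]
whose right-hand side tends to $0$ as $\e \to 0^+$ independently of $t$ in that range.

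To close the argument, suppose toward contradiction that $\tau_\e \leq \nu\theta(\e)^{-1}$ along a subsequence. Continuity of $t \mapsto \|u^\e(\cdot,t) - v\|_{{}_{L^1}}$, which is guaranteed by the regularity $u \in H^1((0,T), W^{1,4}(a,b))$ required for Lemma \ref{lem:energy}, forces $\|u^\e(\cdot,\tau_\e) - v\|_{{}_{L^1}} = \bar{\delta}$; the preceding display then contradicts $\bar{\delta} > 0$ once $\e$ is small enough. Hence $\tau_\e > \nu\theta(\e)^{-1}$, and \eqref{eq:limit} follows by taking the supremum on $[0, \nu\theta(\e)^{-1}]$. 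The main technical obstacle is ensuring that the solution has exactly the regularity invoked above, so that both the energy identity and the continuity of the $L^1$-norm in time are available; as observed after Lemma \ref{lem:energy}, existence of such strong solutions is assumed rather than proved, and this is the point at which all five regimes \textbf{(E1)}--\textbf{(E5)} are handled uniformly by the same abstract scheme, with the distinction between exponential and algebraic slow motion being entirely absorbed into the factor $\theta(\e)^{-1}$.
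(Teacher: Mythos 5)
Your proof is correct and follows essentially the same route as the paper: both rest on the energy identity \eqref{eq:energyestimates} combined with the lower bound of Proposition \ref{prop:alltogether} to control $\iint_{Q_t}(u^\e_s)^2\,dx\,ds$ by $C\e\,\theta(\e)$, and then Cauchy--Schwarz plus the triangle inequality. The only (cosmetic) difference is the bootstrap mechanism: you run a stopping-time/continuity argument on $\|u^\e(\cdot,t)-v\|_{L^1}$ itself, whereas the paper (Proposition \ref{prop:L2-norm}) thresholds the accumulated quantity $\int_0^{\hat T}\|u^\e_t\|_{L^1}\,dt$ at $\tfrac12\bar\delta$, which yields the same conclusion without explicitly invoking continuity of the $L^1$ distance in time.
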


The proof of \eqref{eq:limit} is a consequence of the following result, which is obtained by using \eqref{eq:energyestimates} and Proposition \ref{prop:alltogether}.

\begin{prop}\label{prop:L2-norm}
Assume that $D$, $F$ are given by \eqref{eq:D-F} or \eqref{eq:D-F-onezero},
and consider the solution $u^\e$ to \eqref{eq:model}-\eqref{eq:Neu}-\eqref{eq:initial} 
with initial datum $u_0^{\varepsilon}$ satisfying \eqref{eq:ass-u0} and \eqref{eq:energy-ini}.
Then, there exist positive constants $\varepsilon_0, C_1, C_2>0$ (independent on $\varepsilon$) such that
\begin{equation}\label{L2-norm}
	\int_0^{C_1\varepsilon^{-1}\theta(\e)^{-1}}\|u_t^\varepsilon\|^2_{{}_{L^2}}dt\leq C_2\theta(\varepsilon)\e,
\end{equation}
for all $\varepsilon\in(0,\varepsilon_0)$.
\end{prop}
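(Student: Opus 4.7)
The plan is to combine the energy identity from Lemma \ref{lem:energy} with the lower bound provided by Proposition \ref{prop:alltogether} via a continuity/bootstrap argument on the map $t\mapsto\|u^\e(\cdot,t)-v\|_{{}_{L^1}}$. First I would apply Lemma \ref{lem:energy} on $Q_T$ to write
\[
\e^{-1}\int_0^T\|u_t^\e\|_{{}_{L^2}}^2\,dt = E_\e[u^\e](0)-E_\e[u^\e](T),
\]
and note that the upper bound $E_\e[u^\e](0)\leq N\gamma+C\theta(\e)$ is provided by the hypothesis \eqref{eq:energy-ini}. To obtain a matching lower bound for $E_\e[u^\e](T)$, one has to verify the assumptions \eqref{eq:u-v-bar} and \eqref{eq:add-ass} of Proposition \ref{prop:alltogether} at time $T$. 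Hypothesis \eqref{eq:add-ass} is automatic for all $t\geq0$, since $E_\e[u^\e](t)$ is non-increasing by the energy identity and satisfies the bound at $t=0$.

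The control of \eqref{eq:u-v-bar} is where the bootstrap enters. Define the first exit time
\[
\tau_\e := \inf\bigl\{t\geq0:\|u^\e(\cdot,t)-v\|_{{}_{L^1}}\geq \bar\delta\bigr\},
\]
with the convention $\inf\emptyset=+\infty$. Since $\|u_0^\e-v\|_{{}_{L^1}}\to 0$ by \eqref{eq:ass-u0}, $\tau_\e>0$ for $\e$ small; on $[0,\tau_\e)$ Proposition \ref{prop:alltogether} gives $E_\e[u^\e](t)\geq N\gamma-C\theta(\e)$, and combining with the upper bound above I deduce
\[
\int_0^t\|u_s^\e\|_{{}_{L^2}}^2\,ds\leq 2C\theta(\e)\e, \qquad\forall\,t\in[0,\tau_\e).
\]
The proof is complete once I show $\tau_\e\geq C_1\e^{-1}\theta(\e)^{-1}$ for some $C_1>0$ independent of $\e$.

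To close the bootstrap, I apply Cauchy--Schwarz in space and then in time: for $t\in[0,\tau_\e)$,
\[
\|u^\e(\cdot,t)-u_0^\e\|_{{}_{L^1}}\leq\int_0^t\|u_s^\e\|_{{}_{L^1}}\,ds\leq (b-a)^{1/2}t^{1/2}\Bigl(\int_0^t\|u_s^\e\|_{{}_{L^2}}^2\,ds\Bigr)^{\!1/2}\!\leq (b-a)^{1/2}\bigl(2C\,t\,\e\,\theta(\e)\bigr)^{1/2},
\]
so that, by the triangle inequality,
\[
\|u^\e(\cdot,t)-v\|_{{}_{L^1}}\leq\|u_0^\e-v\|_{{}_{L^1}}+(b-a)^{1/2}\bigl(2C\e\theta(\e)t\bigr)^{1/2}.
\]
Choosing $C_1>0$ so small that $(b-a)^{1/2}(2CC_1)^{1/2}<\bar\delta/2$ and then $\e$ so small that $\|u_0^\e-v\|_{{}_{L^1}}<\bar\delta/2$, the right-hand side remains strictly below $\bar\delta$ for every $t\leq C_1\e^{-1}\theta(\e)^{-1}$. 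By continuity of $t\mapsto u^\e(\cdot,t)$ in $L^1(a,b)$ (which follows from $u_t^\e\in L^2(Q_T)$), this rules out $\tau_\e<C_1\e^{-1}\theta(\e)^{-1}$, and \eqref{L2-norm} follows with $C_2=2C$.

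The only real obstacle is consistently choosing the constants in the bootstrap: the lower-bound constant $C$ of Proposition \ref{prop:alltogether} is fixed once $\bar\delta$ is fixed, and then $C_1$ must be taken small enough that the inequality above stays below $\bar\delta$. No further analytic input is needed beyond the energy identity, Proposition \ref{prop:alltogether}, and two applications of Cauchy--Schwarz; the argument is uniform in the choice of $\theta$ among the cases \textbf{(E1)}--\textbf{(E5)}, which is why the same proof delivers both the exponentially and the algebraically long time scales.
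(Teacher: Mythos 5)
Your argument is correct and is essentially the paper's own proof: both combine the energy identity \eqref{eq:energyestimates} with the lower bound of Proposition \ref{prop:alltogether} through a continuation argument, and both close it with H\"older's inequality to convert the $L^2$-in-time control of $u_t^\e$ into $L^1$-closeness to $v$. The only cosmetic difference is that the paper runs the bootstrap on the monotone quantity $\int_0^{\hat T}\|u_t^\e\|_{{}_{L^1}}\,dt$ reaching $\tfrac12\bar\delta$, whereas you define the exit time directly via $\|u^\e(\cdot,t)-v\|_{{}_{L^1}}$ (which is why you need, and correctly supply, the $L^1$-continuity of $t\mapsto u^\e(\cdot,t)$).
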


\begin{proof}
By using the assumptions \eqref{eq:ass-u0} and \eqref{eq:energy-ini}, choose $\varepsilon_0>0$ so small that for all $\varepsilon\in(0,\varepsilon_0)$, we have
\begin{equation}\label{1/2delta}
	\|u_0^\varepsilon-v\|_{{}_{L^1}}\leq\frac12\bar\delta,
\end{equation}
where $\bar\delta$ is the constant appearing in condition \eqref{eq:u-v-bar}, and the estimates \eqref{eq:energy-ini} holds.
We claim that if
\begin{equation}\label{claim1}
	\int_0^{\hat T}\|u_t^\varepsilon\|_{{}_{L^1}}dt\leq\frac12\bar\delta,
\end{equation}
for some $\hat T>0$, then there exists $C>0$ such that
\begin{equation}\label{claim2}
	E_\varepsilon[u^\varepsilon](\hat T)\geq N\gamma-C\theta(\varepsilon).
\end{equation}
Indeed, inequality \eqref{claim2} follows from Proposition \ref{prop:alltogether} if $u^\varepsilon(\cdot,\hat T)$ satisfies \eqref{eq:u-v-bar} and \eqref{eq:add-ass}.
The latter condition holds true because $E_\varepsilon[u^\varepsilon](\hat T)\leq E_\varepsilon[u^\varepsilon](0)\leq N\gamma+C\theta(\e)$, 
where we used the energy estimates \eqref{eq:energyestimates} and the assumption \eqref{eq:energy-ini}.
As concerning the condition \eqref{eq:u-v-bar} at time $\hat T$, by using triangle inequality, \eqref{1/2delta} and \eqref{claim1}, we obtain
\begin{equation*}
	\|u^\varepsilon(\cdot,\hat T)-v\|_{{}_{L^1}}\leq\|u^\varepsilon(\cdot,\hat T)-u_0^\varepsilon\|_{{}_{L^1}}+\|u_0^\varepsilon-v\|_{{}_{L^1}}
	\leq\int_0^{\hat T}\|u_t^\varepsilon\|_{{}_{L^1}}+\frac12\bar\delta\leq\bar\delta,
\end{equation*}
and we proved the claim.
Next, by using the inequalities \eqref{eq:energyestimates}, \eqref{eq:energy-ini} and \eqref{claim2}, we infer
\begin{equation}\label{L2-norm-Teps}
	\int_0^{\hat T}\|u_t^\varepsilon\|^2_{{}_{L^2}}dt=\e\left(E_\e[u_0^\e]-E_\e[u_\e](\hat T)\right)\leq C_2\theta(\varepsilon)\e.
\end{equation}
It remains to prove that inequality \eqref{claim1} holds for $\hat T\geq C_1\e^{-1}\theta(\varepsilon)^{-1}$.
If 
\begin{equation*}
	\int_0^{+\infty}\|u_t^\varepsilon\|_{{}_{L^1}}dt\leq\frac12\bar\delta,
\end{equation*}
there is nothing to prove. 
Otherwise, choose $\hat T$ such that
\begin{equation*}
	\int_0^{\hat T}\|u_t^\varepsilon\|_{{}_{L^1}}dt=\frac12\bar\delta.
\end{equation*}
Using H\"older's inequality and \eqref{L2-norm-Teps}, we deduce
\begin{equation*}
	\frac12\bar\delta\leq[\hat T(b-a)]^{1/2}\biggl(\int_0^{\hat T}\|u_t^\varepsilon\|^2_{{}_{L^2}}dt\biggr)^{1/2}\leq
	\left[\hat T(b-a)C_2\varepsilon\theta(\varepsilon)\right]^{1/2}.
\end{equation*}
It follows that there exists $C_1>0$ such that
\begin{equation*}
	\hat T\geq C_1\varepsilon^{-1}\theta(\varepsilon)^{-1},
\end{equation*}
and the proof is complete.
\end{proof}

Now, we have all the tools to prove \eqref{eq:limit}.
\begin{proof}[Proof of Theorem \ref{thm:main}]
Triangle inequality gives
\begin{equation}\label{trianglebar}
	\|u^\varepsilon(\cdot,t)-v\|_{{}_{L^1}}\leq\|u^\varepsilon(\cdot,t)-u_0^\varepsilon\|_{{}_{L^1}}+\|u_0^\varepsilon-v\|_{{}_{L^1}},
\end{equation}
for all $t\in[0,\nu\theta(\varepsilon)^{-1}]$. 
The term $\|u_0^\varepsilon-v\|_{{}_{L^1}}$ tends to $0$ by assumption \eqref{eq:ass-u0}.
Regarding the first term in the right hand side of \eqref{trianglebar}, take $\varepsilon$ so small that $C_1\varepsilon^{-1}\geq\nu$;
thus we can apply Proposition \ref{prop:L2-norm} and by using H\"older's inequality and \eqref{L2-norm}, we infer
\begin{equation*}
	\sup_{0\leq t\leq \nu\theta(\varepsilon)^{-1}}\|u^\e(\cdot,t)-u^\e_0\|_{{}_{L^1}}\leq\int_0^{\nu\theta(\varepsilon)^{-1}}\|u_t^\e(\cdot,t)\|_{{}_{L^1}}\,dt\leq C\sqrt\e,
\end{equation*}		
for all $t\in[0,\nu\theta(\varepsilon)^{-1}]$. Hence, \eqref{eq:limit} follows.
\end{proof}

\subsection{Layer dynamics}
In this section we apply a standard procedure (cfr. \cite{FHLP20,FLM19,FPS21,Grnt95}),
which allows us to obtain an upper bound on the velocity of the transition points, once \eqref{eq:limit} has been established.

Fix $v$ as in \eqref{eq:vstruct}, $r$ as in \eqref{eq:r} and define its {\it interface} $I[v]$ as
\begin{equation*}
	I[v]:=\{h_1,h_2,\ldots,h_N\}.
\end{equation*}
For an arbitrary function $u:[a,b]\rightarrow I$ and an arbitrary closed subset $K\subset\R\backslash\{-1,1\}$,
the {\it interface} $I_K[u]$ is defined by
\begin{equation*}
	I_K[u]:=u^{-1}(K).
\end{equation*}
Finally, we recall that for any $X,Y\subset\mathbb{R}$ the {\it Hausdorff distance} $d(X,Y)$ between $X$ and $Y$ is defined by 
\begin{equation*}
	d(X,Y):=\max\biggl\{\sup_{x\in X}d(x,Y),\,\sup_{y\in Y}d(y,X)\biggr\},
\end{equation*}
where $d(x,Y):=\inf\{|y-x|: y\in Y\}$. 

First, we prove a purely variational in character result, which states that, if a function $u\in H^1(a,b)$ is close to $v$ in $L^1$ 
and $E_\varepsilon[u]$ exceeds of a sufficiently small quantity the minimum energy to have $N$ transitions, then 
the distance between the interfaces $I_K[u]$ and $I_K[v]$ is small.  
\begin{lem}\label{lem:interface}
Let $D$ and $F$ be as in \eqref{eq:D-F} or \eqref{eq:D-F-onezero}.
Given $\delta_1\in(0,r)$ and a closed subset $K\subset \R\backslash\{-1,1\}$, 
there exist positive constants $\hat\delta,\varepsilon_0$ (independent on $\e$) and $M>0$ such that for any $u\in H^1(a,b)$ satisfying 
\begin{equation}\label{eq:lem-interf}
	\|u-v\|_{{}_{L^1}}<\hat\delta \qquad \quad \mbox{ and } \qquad \quad E_\varepsilon[u]\leq N\gamma+M,
\end{equation}
for all $\varepsilon\in(0,\varepsilon_0)$, we have
\begin{equation}\label{lem:d-interfaces}
	d(I_K[u], I[v])<\tfrac12\delta_1.
\end{equation}
\end{lem}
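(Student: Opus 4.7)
The plan is to argue by contradiction, showing that any violation of \eqref{lem:d-interfaces} produces an energy excess above the $N$-transition baseline $N\gamma$ incompatible with the second hypothesis in \eqref{eq:lem-interf}, once $M$ is chosen small enough. The key quantity is the positive constant
$$\mu_0 := \inf_{k\in K}\,\min\!\left\{\int_{-1}^{k}\!\sqrt{2D(s)F(s)}\,ds,\;\int_{k}^{+1}\!\sqrt{2D(s)F(s)}\,ds\right\} \;>\; 0,$$
which is strictly positive because $K$ is closed and uniformly separated from the two zeros of $\sqrt{2DF}$ at $s=\pm 1$. The entire argument rests on iterating the Young-type bound \eqref{eq:ineq}, i.e.\ $E_\varepsilon[u]\geq \mathrm{TV}(G\circ u;[a,b])$ with $G'=\sqrt{2DF}$: each monotone traversal of $u$ between the pure phases $\pm 1$ contributes $\gamma$, while each excursion from a neighborhood of $\pm 1$ into $K$ and back contributes at least $2\mu_0$, up to a term that vanishes with the tolerance at the endpoints of the excursion.

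For the inclusion $\sup_{x\in I_K[u]}d(x,I[v])<\delta_1/2$, I would suppose toward contradiction that there exists $x_0\in I_K[u]$ with $\mathrm{dist}(x_0,I[v])\geq \delta_1/2$, so that $v\equiv\sigma\in\{-1,+1\}$ on $J:=(x_0-\delta_1/2,x_0+\delta_1/2)$. Fixing a small tolerance $\rho>0$, the bound $\|u-v\|_{L^1}<\hat\delta$ combined with Chebyshev's inequality yields, provided $\hat\delta\ll\rho\delta_1$, two points $x_1<x_0<x_2$ in $J$ with $|u(x_i)-\sigma|<\rho$ for $i=1,2$. Applying \eqref{eq:ineq} separately on $[x_1,x_0]$ and on $[x_0,x_2]$ gives a contribution on $[x_1,x_2]$ of at least $2\mu_0-\omega(\rho)$ with $\omega(\rho)\to 0^+$ as $\rho\to 0^+$. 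The same procedure around each $h_j$ (locating, via Chebyshev, points where $u$ is $\rho$-close to $\pm 1$ on either side) gives at least $\gamma-\omega(\rho)$ per transition; since $x_0$ is uniformly separated from $I[v]$, these $N+1$ sub-intervals can be chosen pairwise disjoint, yielding
$$E_\varepsilon[u]\;\geq\; N\gamma+2\mu_0-(N+1)\,\omega(\rho),$$
which violates \eqref{eq:lem-interf} once $M<\mu_0$ and $\rho,\hat\delta$ are sufficiently small.

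For the reverse inclusion $\sup_{h_j\in I[v]}d(h_j,I_K[u])<\delta_1/2$, I would suppose some $h_j$ satisfies $u(J_j)\cap K=\emptyset$ with $J_j:=(h_j-\delta_1/2,h_j+\delta_1/2)$. The same Chebyshev argument produces $x_-\in J_j\cap(h_j-\delta_1/2,h_j)$ and $x_+\in J_j\cap(h_j,h_j+\delta_1/2)$ at which $u$ is respectively close to the left- and right-limits of $v$ at $h_j$, which differ by $2$. Continuity of $u\in H^1(a,b)\hookrightarrow C[a,b]$ then forces $u([x_-,x_+])$ to be a connected interval spanning from near $-1$ to near $+1$, hence intersecting any point of $K\cap(-1,1)$ --- the implicit standing requirement in the way this lemma is to be applied --- contradicting $u(J_j)\cap K=\emptyset$.

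The only subtle step is the careful calibration of the small parameters $\hat\delta$ and $\rho$ so that the excursion lower bound and the $N$ main-transition lower bounds can be combined over disjoint sub-intervals; this is bookkeeping of the same flavour as in the proof of Proposition \ref{prop:lower} and requires no new analytic ingredients beyond \eqref{eq:ineq} and the positivity of $\mu_0$.
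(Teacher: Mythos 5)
Your proof is correct and follows essentially the same route as the paper's: argue by contradiction, use the Young-type bound \eqref{eq:ineq} as a total-variation lower bound for $E_\varepsilon$, locate via the $L^1$-closeness points on either side of each $h_i$ where $u$ is $\rho$-close to $\pm1$, and observe that an excursion of $u$ into $K$ at distance at least $\delta_1/2$ from $I[v]$ adds a fixed positive amount of energy (your $2\mu_0$, the paper's single-leg infimum $>2M$) beyond the $N\gamma$ already accounted for by the transitions, contradicting \eqref{eq:lem-interf} once $M$ and $\rho$ are calibrated. The only difference is that you also verify the second half of the Hausdorff distance in \eqref{lem:d-interfaces} (that each $h_j$ lies close to $I_K[u]$) via the intermediate value theorem, a direction the paper's written proof leaves implicit; as you correctly flag, that step needs the standing assumption $K\cap(-1,1)\neq\emptyset$ under which the lemma is actually applied.
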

\begin{proof}
Fix $\delta_1\in(0,r)$ and choose $\rho>0$ small enough that 
\begin{equation*}
	I_\rho:=(-1-\rho,-1+\rho)\cup(1-\rho,1+\rho)\subset\R\backslash K, 
\end{equation*}
and 
\begin{equation*}
	\inf\left\{\left|\int_{\xi_1}^{\xi_2}\sqrt{2D(s)F(s)}\,ds\right| : \xi_1\in K, \xi_2\in I_\rho\right\}>2M,
\end{equation*}
where
\begin{equation*}
	M:=2N\max\left\{\int_{-1}^{-1+\rho}\sqrt{2D(s)F(s)}\,ds, \, \int_{1-\rho}^{1}\sqrt{2D(s)D(s)}\,ds \right\}.
\end{equation*}
By reasoning as in the proof of \eqref{2points} in Proposition \ref{prop:lower}, we can prove that for each $i$ there exist
\begin{equation*}
	x^-_{i}\in(h_i-\delta_1/2,h_i) \qquad \textrm{and} \qquad x^+_{i}\in(h_i,h_i+\delta_1/2),
\end{equation*}
such that
\begin{equation*}
	|u(x^-_{i})-v(x^-_{i})|<\rho \qquad \textrm{and} \qquad |u(x^+_{i})-v(x^+_{i})|<\rho.
\end{equation*}
Suppose that \eqref{lem:d-interfaces} is violated.
Hence, $d(I_K[u], I[v])\geq\frac12\delta_1$, meaning that there exists (at least) a point $\zeta$ such that
$u(\zeta)=\xi_1\in K$ and $\displaystyle\min_{i=1,\dots,N}|h_i-\zeta|\geq\tfrac12\delta_1$.
In particular, notice that $\zeta\notin[x^-_{i},x^+_{i}]$.
Using \eqref{eq:ineq}, we deduce
\begin{align}
	E_\varepsilon[u]\geq&\sum_{i=1}^N\left|\int_{u(x^-_{i})}^{u(x^+_{i})}\sqrt{2D(s)F(s)}\,ds\right|\notag\\ 
	& \qquad +\inf\left\{\left|\int_{\xi_1}^{\xi_2}\sqrt{2D(s)F(s)}\,ds\right| : \xi_1\in K, \xi_2\in I_\rho\right\}. \label{diseq:E1}
\end{align}
On the other hand, we have
\begin{align*}
	\left|\int_{u(x^-_{i})}^{u(x^+_{i})}\sqrt{2D(s)F(s)}\,ds\right|&\geq\int_{-1}^{1}\sqrt{2D(s)F(s)}\,ds\\
	&\qquad-\int_{-1}^{-1+\rho}\sqrt{2D(s)F(s)}\,ds\\
	&\qquad -\int_{1-\rho}^{1}\sqrt{2D(s)F(s)}\,ds\\
	&\geq\gamma-\frac{M}{N},
\end{align*}
where $\gamma$ is defined in \eqref{eq:gamma}.
Substituting the latter bound in \eqref{diseq:E1}, we deduce
\begin{equation*}
	E_\varepsilon[u]\geq N\gamma-M+\inf\left\{\left|\int_{\xi_1}^{\xi_2}\sqrt{2D(s)F(s)}\,ds\right| : \xi_1\in K, \xi_2\in I_\rho\right\}.
\end{equation*}
For the choice of $\rho$,  we obtain	
\begin{align*}
	E_\varepsilon[u]>N\gamma+M,
\end{align*}
which is a contradiction with assumption \eqref{eq:lem-interf}. 
Hence, the bound \eqref{lem:d-interfaces} is true.
\end{proof}

Thanks to Theorem \ref{thm:main} and Lemma \ref{lem:interface} we can prove the following result, 
which states that the velocity of the transition points is of $\mathcal{O}(\theta(\e))$.
\begin{thm}\label{thm:interface}
Let $D$ and $F$ be as in \eqref{eq:D-F} or \eqref{eq:D-F-onezero} and $\theta$ as in \emph{\textbf{(E1})-\textbf{(E5)}}.
Let $u^\varepsilon$ be the solution of \eqref{eq:model}-\eqref{eq:Neu}-\eqref{eq:initial}, 
with initial datum $u_0^{\varepsilon}$ satisfying \eqref{eq:ass-u0} and \eqref{eq:energy-ini}. 
Given $\delta_1\in(0,r)$ and a closed subset $K\subset\R\backslash\{-1,1\}$, set
\begin{equation*}
	t_\varepsilon(\delta_1)=\inf\{t:\; d(I_K[u^\varepsilon(\cdot,t)],I_K[u_0^\varepsilon])>\delta_1\}.
\end{equation*}
There exists $\varepsilon_0>0$ such that if $\varepsilon\in(0,\varepsilon_0)$ then
\begin{equation*}
	t_\varepsilon(\delta_1)>\theta(\varepsilon)^{-1}.
\end{equation*}
\end{thm}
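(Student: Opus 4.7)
The plan is to combine Theorem \ref{thm:main}, the energy dissipation identity \eqref{eq:energyestimates}, and the variational statement of Lemma \ref{lem:interface}, so that the interface $I_K[u^\varepsilon(\cdot,t)]$ remains close to the limit interface $I[v]$ for all $t \in [0,\theta(\varepsilon)^{-1}]$. A triangle inequality for the Hausdorff distance then gives the desired bound on $d(I_K[u^\varepsilon(\cdot,t)], I_K[u_0^\varepsilon])$.

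First, I would fix $\delta_1 \in (0,r)$ and $K \subset \mathbb{R} \setminus \{-1,1\}$ closed, and apply Lemma \ref{lem:interface} to produce the constants $\hat{\delta}, M > 0$ such that whenever $u \in H^1(a,b)$ satisfies $\|u-v\|_{L^1} < \hat{\delta}$ and $E_\varepsilon[u] \leq N\gamma + M$, then $d(I_K[u], I[v]) < \tfrac12 \delta_1$. The next step is to verify that, uniformly in $t \in [0, \theta(\varepsilon)^{-1}]$ and for $\varepsilon$ small enough, both hypotheses are met by $u^\varepsilon(\cdot,t)$. For the energy bound, the identity \eqref{eq:energyestimates} gives the monotonicity $E_\varepsilon[u^\varepsilon(\cdot,t)] \leq E_\varepsilon[u_0^\varepsilon]$ for every $t \geq 0$; combining this with the assumption \eqref{eq:energy-ini} on the initial datum yields $E_\varepsilon[u^\varepsilon(\cdot,t)] \leq N\gamma + C\theta(\varepsilon)$, which is $\leq N\gamma + M$ as soon as $C\theta(\varepsilon) \leq M$, i.e. for all sufficiently small $\varepsilon$. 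For the $L^1$ proximity, I would invoke Theorem \ref{thm:main} with $\nu = 1$, giving
\begin{equation*}
\sup_{0 \leq t \leq \theta(\varepsilon)^{-1}} \|u^\varepsilon(\cdot,t) - v\|_{L^1} \longrightarrow 0 \qquad \text{as } \varepsilon \to 0^+,
\end{equation*}
so that for $\varepsilon \in (0, \varepsilon_0)$ the quantity is bounded by $\hat{\delta}$.

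With both hypotheses of Lemma \ref{lem:interface} verified, I obtain $d(I_K[u^\varepsilon(\cdot,t)], I[v]) < \tfrac12 \delta_1$ for every $t \in [0,\theta(\varepsilon)^{-1}]$; in particular this is true at $t = 0$, giving $d(I_K[u_0^\varepsilon], I[v]) < \tfrac12\delta_1$. The Hausdorff distance satisfies the triangle inequality, so for every such $t$,
\begin{equation*}
d(I_K[u^\varepsilon(\cdot,t)], I_K[u_0^\varepsilon]) \leq d(I_K[u^\varepsilon(\cdot,t)], I[v]) + d(I[v], I_K[u_0^\varepsilon]) < \tfrac12\delta_1 + \tfrac12\delta_1 = \delta_1.
\end{equation*}
Hence the definition of $t_\varepsilon(\delta_1)$ forces $t_\varepsilon(\delta_1) > \theta(\varepsilon)^{-1}$, completing the proof.

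The only nontrivial ingredient is making sure the hypotheses of Lemma \ref{lem:interface} hold at every intermediate time, but the energy identity \eqref{eq:energyestimates} takes care of the energy bound essentially for free (monotonicity), while Theorem \ref{thm:main} provides the uniform-in-time $L^1$ estimate on the prescribed time window. No new analytical work is needed; the step that requires care is simply matching the constants $\hat{\delta}$ and $M$ furnished by Lemma \ref{lem:interface} with the small parameters produced by Theorem \ref{thm:main} and by \eqref{eq:energy-ini}, which is a routine matter of choosing $\varepsilon_0$ small.
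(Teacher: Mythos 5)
Your proposal is correct and follows essentially the same route as the paper's proof: apply Lemma \ref{lem:interface} at $t=0$ and at each $t\in(0,\theta(\varepsilon)^{-1}]$, using the energy monotonicity from \eqref{eq:energyestimates} together with \eqref{eq:energy-ini} for the energy hypothesis and \eqref{eq:limit} for the $L^1$ hypothesis, and then conclude by the triangle inequality for the Hausdorff distance. The bookkeeping of constants you describe is exactly what the paper does.
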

\begin{proof}
Let $\varepsilon_0>0$ so small that \eqref{eq:ass-u0}-\eqref{eq:energy-ini}
imply $u_0^\varepsilon$ satisfies \eqref{eq:lem-interf} for all $\varepsilon\in(0,\varepsilon_0)$.
From Lemma \ref{lem:interface} it follows that
\begin{equation}\label{interfaces-u0}
	d(I_K[u_0^\varepsilon], I[v])<\tfrac12\delta_1.
\end{equation}
Now, consider $u^\varepsilon(\cdot,t)$ for all $t\in(0,\theta(\varepsilon)^{-1}]$.
If $\e$ is sufficiently small, then assumption \eqref{eq:lem-interf} is satisfied thanks to \eqref{eq:limit} and because $E_\varepsilon[u^\varepsilon](t)$ is a non-increasing function of $t$, see \eqref{eq:energyestimates}.
Then,
\begin{equation}\label{interfaces-u}
	d(I_K[u^\varepsilon(t)], I[v])<\tfrac12\delta_1,
\end{equation}
for all $t\in(0,\theta(\varepsilon)^{-1}]$. 
Combining \eqref{interfaces-u0} and \eqref{interfaces-u}, we obtain
\begin{equation*}
	d(I_K[u^\varepsilon(t)],I_K[u_0^\varepsilon])<\delta_1,
\end{equation*}
for all $t\in(0,\theta(\varepsilon)^{-1}]$.
\end{proof}

\section{Conclusions}
\label{sec:conclusions}
Let us make some comments about Theorems \ref{thm:main}-\ref{thm:interface}.
First of all, in the proof of Proposition \ref{prop:example} we showed that the function $\varphi_N^\e$ defined in \eqref{eq:compactons-} converges in $L^1$ 
to a piecewise constant function $v$ as in \eqref{eq:vstruct} and satisfies $E_\e[\varphi_N^\e]\leq N\gamma$, 
in both the cases \eqref{eq:D-F} and \eqref{eq:D-F-onezero} for any $m\geq0$ and $n\geq2$.
Then, we can say that $\varphi_N^\e$ has a $N$-transition layer structure
and the solution to \eqref{eq:model}-\eqref{eq:Neu}-\eqref{eq:initial} with $u_0^\e=\varphi_N^\e$ maintains
such a transition layer structure for a time $T_\e\geq\nu\theta(\e)^{-1}$.

This result is very surprising in the case \eqref{eq:D-F} with $n\geq m+2$ because $\varphi_N^\e$ is far from being a stationary solution.
Nevertheless, thanks to Theorems \ref{thm:main}-\ref{thm:interface} we can state that
in the case $n=m+2$, since \textbf{(E1)}-\textbf{(E2)} give $T_\e\geq\nu\exp\left\{A\sqrt{n}/\varepsilon\right\}$ for $m\geq2$ 
and $T_\e\geq\nu\exp\left\{A/\varepsilon\right\}$ for $m\in(1,2]$, the solution exhibits \emph{exponentially slow motion}:
it maintains the same transition layer structure for (at least) an exponentially long time and the layers move with an exponentially small speed.
Therefore, we generalized the classical results on the Allen--Cahn equation,
which is \eqref{eq:model}-\eqref{eq:D-F} with $m=0$ and $n=2$, see among others \cite{CaPe89}, \cite{ChenX04}.
Notice that the positive constant $A$ in \textbf{(E1)}-\textbf{(E2)} is strictly related to the constant $r$ in \eqref{eq:r}
and so, to the distance between the two closest transition points: the further the points are, the slower the evolution of the solution.
Moreover, comparing \textbf{(E1)} and \textbf{(E2)}, we see that the degree of the degeneracy in $D$ slows down the evolution of the solution,
in the sense that the larger $m$, the slower the solution evolves.
 
On the other hand, if $n>m+2$ we only proved an \emph{algebraic slow motion}, because the sequence $\{k_j\}_{j\in\mathbb N}$ in \textbf{(E4)}
is bounded and satisfies \eqref{eq:limitalpha}.
It is worthy to observe that the limit $\beta$ of the increasing sequence $\{k_j\}_{j\in\mathbb N}$ diverges as $n\to m+2$.
In this case, we generalized the results of \cite{BetSme2013}.

Regarding the case of a diffusion coefficient with only one degenerate point \eqref{eq:D-F-onezero},
thanks to Theorems \ref{thm:main}-\ref{thm:interface} and \textbf{(E3)},
we can say that some solutions exhibit exponentially slow motion if $n=2$ for any $m\in(1,2]$.
However, it is important to distinguish two cases.
As we proved in Proposition \ref{prop:stat-onezero}, the profile $\varphi_N^\e(-x)$ is a stationary solution if $N$ is even,
so we do not expect evolution at all.
Thanks to Theorems \ref{thm:main}-\ref{thm:interface} and \textbf{(E3)}, we can state that a perturbation of $\varphi_{2N}^\e(-x)$
satisfying \eqref{eq:ass-u0}-\eqref{eq:energy-ini} remains very close to $\varphi_{2N}^\e(-x)$ (at least) for an exponentially long time,
but this is only a partial result.
Indeed, it has to be complemented with a stability result on the profile $\varphi_{2N}^\e(-x)$ and we imagine two possible scenarios:
either the stationary profile is asymptotically stable and the solution maintains the transition layer structure for all time,
or it is unstable and the solution exhibits again the phenomenon of metastability, namely it appears to be stable for a long time
and then the profile undergoes a drastic change (annihilation of some transitions).
The stability/instability of such profile (as the one of the compactons introduced in Proposition \ref{prop:ex-compactons})
is an open problem, which needs further investigation.
On the contrary, if the number of transition is $2N+1$ (odd) or $u_0^\e(a)<0$, we have 
exponentially slow motion of a non-stationary $N$-transition layer structure. 

In the case $n>2$, Theorems \ref{thm:main}-\ref{thm:interface} and \textbf{(E5)} tell us that we have algebraic slow motion for any $m>0$ and 
it is very important to notice that in this case the degeneracy of $D$ does not play a role in the slow evolution of the solution, see \eqref{eq:exp_alg2}.
This is a consequence of the fact that $F$ is degenerate at $-1$ (in the sense that $F''(-1)=0$) and there is no \emph{counterbalance} of the diffusion,
which satisfies $D(-1)>0$.
In other words, it is easy to extend our results to the general case
\begin{equation}\label{eq:general-D-F}
	D(u)=|1-u|^{m_1}|1+u|^{m_2}, \qquad \qquad F(u)=|1-u|^{n_1}|1+u|^{n_2},
\end{equation}
and if $n_i> m_i+2$ for $i=1$ or $i=2$, our results show algebraic slow motion of the solutions.

Finally, it is interesting to observe how the slow motion results are strictly related to the behavior of the standing wave,
described in Propositions \ref{prop:standing} and \ref{prop:standing-onezero}.
In the case that the standing wave does not touch the equilibria $\pm1$ (in order to exclude compactons or other stationary solutions),
some solutions exhibit exponentially slow motion if and only if the standing wave converges exponentially fast to $\pm1$ as $x\to\pm\infty$,
i.e. $n_i=m_i+2$ for $i=1,2$ in \eqref{eq:general-D-F}.
Otherwise, if $n_1>m_1+2$ or $n_2>m_2+2$ the standing wave converges algebraically fast to $+1$ or $-1$ and 
one can only observe solutions exhibiting algebraically slow motion.

\bibliography{bibliography}

\bibliographystyle{newstyle}

\end{document}